\newtheoremstyle{plainNoItalics}{}{}{\normalfont}{}{\bfseries}{.}{ }{}
\theoremstyle{plain}
\newtheorem{thm}{Theorem}[section]
\theoremstyle{plainNoItalics}
\newtheorem{defn}[thm]{Definition} 
\newtheorem{rem}[thm]{Remark}
\newtheorem{prop}[thm]{Proposition}
\newtheorem{exa}[thm]{Example}
\renewcommand{\theequation}{\thesection.\arabic{equation}}
\newcommand{\f}{\frac}
\newcommand{\beq}{\begin{equation}}
\newcommand{\eeq}{\end{equation}}
\newcommand{\beqa}{\begin{eqnarray}}
\newcommand{\eeqa}{\end{eqnarray}}
\newcommand{\bit}{\begin{itemize}}
\newcommand{\eit}{\end{itemize}}
\newcommand{\bedef}{\begin{defn}}
\newcommand{\edefn}{\end{defn}}
\newcommand{\bpro}{\begin{prop}}
\newcommand{\epro}{\end{prop}}
\newcommand{\bx}{\bf x}
\newcommand{\bv}{\bf v}
\begin{document}

\baselineskip=1.6pc

%\vspace*{.10in}

%=============  title  =========================
\begin{center}
{\bf
%Semi-Lagrangian finite difference WENO scheme with parametrized MPP flux limiters for the Vlasov equation
High Order Maximum Principle Preserving Semi-Lagrangian Finite Difference WENO schemes for the Vlasov Equation
}
\end{center}
\vspace{.2in}
\centerline{
Tao Xiong \footnote{Department of
Mathematics, University of Houston, Houston, 77204. E-mail:
txiong@math.uh.edu.}
Jing-Mei Qiu \footnote{Department of Mathematics, University of Houston,
Houston, 77204. E-mail: jingqiu@math.uh.edu.
The first and second authors are supported by Air Force Office of Scientific Computing YIP grant FA9550-12-0318, NSF grant DMS-1217008 and University of Houston.}
Zhengfu Xu \footnote{Department of
Mathematical Science, Michigan Technological University, Houghton, 49931. E-mail: zhengfux@mtu.edu. Supported by NSF grant DMS-1316662.}
Andrew Christlieb
\footnote{
Department of Mathematics, Michigan State University, East Lansing, MI, 48824. E-mail: andrewch@math.msu.edu
}
}

\bigskip
\centerline{\bf Abstract}
\vspace{0.2cm}
In this paper, we propose the parametrized maximum principle preserving (MPP) flux limiter, originally developed in 
{\em [Z. Xu, Math. Comp., (2013), in press]}, 
to the semi-Lagrangian finite difference weighted essentially non-oscillatory scheme for solving the Vlasov equation.
The MPP flux limiter is proved to maintain up to fourth order accuracy for the semi-Lagrangian finite difference
scheme without any time step restriction. Numerical studies on the Vlasov-Poisson system demonstrate the performance of the proposed method and its ability in preserving the positivity of the probability distribution function while maintaining the high order accuracy. 
%
%It will also be compared with the finite difference WENO scheme with a high order Runge-Kutta time discretization.  that both methods perform very well and can preserve the positivity of the probability distribution
%function. }

\vspace{1cm}
\noindent {{\bf Keywords}: Semi-Lagrangian method; Finite difference WENO scheme; Maximum principle preserving; Parametrized flux limiter; Vlasov equation}

\newpage

\newpage

\section{Introduction}
\label{sec1}
\setcounter{equation}{0}
\setcounter{figure}{0}
\setcounter{table}{0}

% what is VP
% numerical methods solving VP, especialy DG or WENO (Yingda, Filbet)
% numerical methods with MPP
% development of parametrized MPP limiters
% main aim of this paper (advantage of SL FD WENO and parametrized MPP)
% section divided
In this paper, we will consider the Vlasov-Poisson (VP) system
\beqa
\label{eq:vp1}
\partial_t f + {\bv} \cdot \nabla_{\bx} f + E(t,{\bx}) \cdot \nabla_{\bv} f=0,\\
\label{eq:vp2}
E(t,{\bx})=-\nabla \phi(t,{\bx}), \qquad -\Delta \phi(t,{\bx})=\rho(t,{\bx}).
\eeqa
on the domain $(0, T]\times\Omega$, where $\Omega=\Omega_{\bx} \times \mathbb{R}^n$
and $\Omega_{\bx}\subset\mathbb{R}^n$.
%$\Omega_{\bx}$ can be either a finite domain or $\mathbb{R}^n$.
It is an important system for modelling the collisionless plasma. 
The Vlasov equation (\ref{eq:vp1}) is a kinetic equation that describes the time
evolution of the probability distribution function (PDF) $f(t,{\bx},{\bv})$ of finding an electron
at position ${\bx}$ with velocity ${\bv}$ at time $t$. $E(t,{\bx})$ is the electric field
and $\phi(t,{\bx})$ is a self-consistent electrostatic potential function described
by the Poisson equation (\ref{eq:vp2}). The probability distribution function
$f$ is coupled with the long range fields via the charge density $\rho(t,{\bx})=\int_{\mathbb{R}^n} f(t,{\bx},{\bv})d{\bv}-1$,
with uniformly distributed infinitely massive ions in the background. The
total charge neutrality condition $\int_{\Omega_{\bx}} (\rho(t,{\bx})-1)d{\bx}=0$ is imposed. 
The Vlasov equation (\ref{eq:vp1}) in the hyperbolic form enjoys the maximum principle preserving (MPP) and positivity preserving (PP) properties. That is, let
\[
f_{m}= \underset{x,v}{\text{min}}(f(x,v,0)) \ge 0, \quad f_{M}=\underset{x,v}{\text{max}}(f(x,v,0)),
\]
the solution at later time is still within the interval $[f_{m}, f_{M}]$ and stays positive.
%In this paper,
%we assume periodic boundary condition in ${\bx}$ on a finite domain $\Omega_{\bx}$,
%and $f\rightarrow 0$ as $\|{\bv} \|\rightarrow \infty$.   

The VP system has been extensively studied numerically. Popular numerical methods, besides the semi-Lagrangian (SL) approaches which will be reviewed in the next paragraph, include 
particle-in-cell (PIC) methods \cite{eastwood1986particle,birdsall2005plasma,hockney2010computer}, Lagrangian particle methods \cite{barnes1986hierarchical,evstatiev2013variational}, weighted essentially non-oscillatory (WENO) coupled with Fourier collocation \cite{zhou2001numerical}, Fourier-Fourier spectral methods \cite{klimas1987method,klimas1994splitting},  finite volume methods \cite{fijalkow1999numerical, filbet2001conservative}, continuous finite element methods \cite{zaki1988finite1,zaki1988finite2}, Runge-Kutta discontinuous Galerkin methods \cite{heath2012discontinuous, ayuso2009discontinuous, de2012discontinuous, cheng2012study, energy-cons-VP}, and methods in references therein. 

In this paper, we focus on the SL method, which has been proposed and applied for a wide range of applications, e.g. in atmospheric modeling and simulations \cite{staniforth1991semi,Guo2013discontinuous}, in capturing the moving interface via solving level set equations \cite{strain1999semi}, in fluid dynamics \cite{xiu2001semi} and in kinetic simulations \cite{filbet2003comparison, umeda2006comparison}. Compared to the Eulerian approach, the SL approach also has a fixed set of numerical meshes, but in each time step evolution, the information propagates along the characteristics in the Lagrangian fashion. The SL method can be designed as accurate as the Eulerian approach. In addition, the SL method is free of time step restriction by utilizing the characteristic method in the temporal direction. The design of SL approach requires the tracking of characteristics forward or backward in time, which could be challenging for nonlinear problems. To avoid such difficulty in solving the VP system, the dimensional splitting approach was proposed in \cite{cheng1976integration}. Along this line, SL methods were developed in various settings. For example, in the finite difference framework, different interpolation strategies, such as the cubic spline interpolation \cite{sonnendrucker1999semi}, the cubic interpolated propagation \cite{nakamura1999cubic}, the high order WENO interpolation in a non-conservative form \cite{carrillo2007nonoscillatory} and in a conservative form \cite{qiu2010conservative,qiu2011aconservative,qiu2011bconservative} are proposed. There have also been many work in designing the SL methods in the finite volume framework for the VP system \cite{filbet2001conservative} and for the guiding center Vlasov model \cite{crouseilles2010conservative}, in the finite element discontinuous Galerkin (DG) framework \cite{rossmanith2011positivity, qiu2011positivity} and with a hybrid finite difference-finite element approach \cite{guo2013hybrid}. 

Our main interest in this paper is to develop a MPP, and in particular PP, SL finite difference WENO scheme for the VP system (\ref{eq:vp1}). 
%That is, we would like the numerically preserve the MPP (and/or PP) property of the  PDF $f(t, x, v)$ while preserving the mass conservation. 
The main challenge of designing MPP (and/or PP) schemes within the SL WENO framework is to maintain the designed high order of accuracy from the conservative WENO approximation. Meanwhile, it is desired that no additional restrictive CFL constraint will be introduced.
%, which would potentially reduce the inherent efficiency of the SL approach. 
There have been some research efforts on designing MPP and PP high order SL schemes. For example, in  \cite{filbet2001conservative, crouseilles2010conservative}, PP property is preserved in the finite volume framework. However, the PP preservation of the PDF is accompanied with sacrificing high order spatial accuracy. Recently, in  \cite{rossmanith2011positivity,qiu2011positivity}, the SL discontinuous Galerkin (DG) methods to solve the VP system is coupled  with the MPP limiters, that were originally proposed by Zhang and Shu \cite{zhang2010maximum}. In these approaches, the limiters are applied to the reconstructed polynomials (for finite volume) or the representing polynomials (for DG). In general, the MPP (or PP) property together with the maintenance of high order accuracy is much more challenging to achieve in the finite difference framework than in the finite volume and finite element DG framework via limiting polynomials, see \cite{zhang2011maximum}.

We propose to generalize the recently developed parametrized MPP flux limiter \cite{mpp_xu} to a conservative SL finite difference WENO method solving the VP system. The original parametrized flux limiter for 1D scalar conservation laws  was later extended to the two-dimensional case \cite{liang2013parametrized}. Xiong et al. \cite{xiong2013parametrized} proposed to apply the parametrized MPP flux limiter to the final Runge-Kutta (RK) stage only, with significantly improved time step restriction for maintenance of high order accuracy, leading to much reduced computational cost. It has also been proved in \cite{xiong2013parametrized} that the parametrized MPP flux limiter can maintain up to third order accuracy both in space and in time for nonlinear scalar conservation laws. 

In order to apply the parametrized MPP flux limiter in \cite{mpp_xu} for solving the VP system, we start with the dimensional splitting approach. The parametrized MPP flux limiter is proposed for the conservative high order SL finite difference WENO scheme \cite{qiu2011aconservative, qiu2010conservative}. We mimic the proof in \cite{xiong2013parametrized} to prove that the parametrized MPP flux limiter for the SL finite difference scheme solving linear advection equations can maintain up to fourth order accuracy without any time step constraint.
%, instead of "with appropriate CFL condition"}. 
We also apply the parametrized flux limiter proposed in \cite{xiong2013parametrized} to the RK finite difference WENO approximation of the VP system without operator splitting. Through numerical studies on weak and strong Landau damping, two stream instabilities, and KEEN waves, we show that both methods perform very well with the designed MPP properties, while maintaining high order accuracy and mass conservation.

The rest of the paper is organized as follows. In Section 2, the SL finite
difference WENO scheme is reviewed and the parametrized MPP flux limiting procedure is proposed for the SL WENO scheme. We also prove that the 
parametrized MPP flux limiter maintains up to fourth order accuracy without additional time step restriction. Numerical
studies of the scheme are presented in Section 3.
Conclusions are made in Section 4. 
%Formulas of the SL finite difference WENO scheme are listed in the appendix for reference. 

\section{The parametrized MPP flux limiter for the SL WENO scheme}
\label{sec2}
\setcounter{equation}{0}
\setcounter{figure}{0}
\setcounter{table}{0}

In this section, we will briefly describe the SL finite difference WENO scheme for solving Vlasov equation (\ref{eq:vp1}) in one dimension (both physical and phase spaces). We adopt the method in \cite{qiu2011bconservative}, which is in a conservative flux difference form and is suitable to be coupled with the newly developed parametrized MPP flux limiter \cite{mpp_xu}. 

For the one-dimensional problem, periodic boundary conditions are imposed in x-direction
on a finite domain $\Omega_x=[0, L]$ and in v-direction 
with a cut-off domain $[-V_c, V_c]$ with $V_c$ chosen to be large enough to guarantee $f(x,v,t)=0$ for $|v|\ge V_c$.
The domain $\Omega=[0, L]\times[-V_c, V_c]$ is discretized by the the computational grid 
\[
0 = x_\frac12 < x_\frac32 < \cdots < x_{N_x+\frac12} = L, \quad 
-V_c = v_\frac12< v_\frac32 < \cdots < v_{N_v+\frac12} = V_c.
\]
Let $x_i=\frac12(x_{i-\frac12}+x_{i+\frac12})$ and $v_j=\frac12(v_{j-\frac12}+v_{j+\frac12})$
to be the middle point of each cell. The uniform mesh sizes in each direction are $\Delta x=x_{i+\frac12}-x_{i-\frac12}$ and $\Delta v=v_{j+\frac12}-v_{j-\frac12}$.
%Let $K_{i,j} = I_i \times J_j$ with
%\begin{eqnarray*}
%I_i=[x_{i-\frac12}, x_{i+\frac12}],  J_j=[v_{j-\frac12}, v_{j+\frac12}],  \quad i=1, \cdots, N_x, \quad j=1,\cdots N_v.
%\end{eqnarray*}

Following \cite{cheng1976integration}, the Vlasov equation is dimensionally split to the following form
\begin{eqnarray}
\partial_t f + v \cdot \nabla_x f = 0, \label{eq1}\\
\partial_t f + E(t,x) \cdot \nabla_v f = 0.\label{eq2}
\end{eqnarray}
Using the second order Strang splitting strategy, the numerical solution is updated from time level $t^n$ to time level $t^{n+1}$ by solving
equation (\ref{eq1}) for half a time step, then solving equation (\ref{eq2}) by a full time step,
followed by solving equation (\ref{eq1}) for another half a time step.
Note that each split equation (\ref{eq1}) or (\ref{eq2}) still preserves the MPP (and/or PP) property. 
%We only need to apply the 1D parametrized flux limiters in \cite{xiong2013parametrized} to each of the split Vlasov equation. 

In the following, we will take the prototype 1D linear advection equation 
\begin{equation}
u_t+a u_x=0
\label{eq3}
\end{equation}
with constant $a$, to present the SL finite difference scheme with the parametrized MPP flux limiters.

\subsection{Review of SL finite difference WENO scheme}
\label{sec21}

The SL finite difference WENO scheme proposed in \cite{qiu2011bconservative} is based on
integrating equation (\ref{eq3}) in time over $[t^n, t^{n+1}]$,
\begin{equation}
u(x,t^{n+1})=u(x,t^n)- \mathcal{F}(x)_x,
\label{eq4}
\end{equation}
where
\begin{equation}
\mathcal{F}(x)=\int_{t^n}^{t^{n+1}} a u(x, \tau) d\tau .
\label{eq5}
\end{equation}
By introducing a sliding average function $\mathcal{H}(x)$
\begin{equation}
\mathcal{F}(x)=\frac{1}{\Delta x}\int_{x-\frac{\Delta x}{2}}^{x+\frac{\Delta x}{2}} \mathcal{H}(\zeta)d\zeta ,
\label{eq6}
\end{equation}
with
\begin{equation}
\mathcal{F}(x)_x=\frac{1}{\Delta x}\left(\mathcal{H}(x+\frac{\Delta x}{2})-\mathcal{H}(x-\frac{\Delta x}{2})\right),
\label{eq7}
\end{equation}
the evaluation of equation (\ref{eq4}) at the grid point $x_i$ can be written in a 
conservative form
\begin{equation}
u^{n+1}_i=u^n_i-\frac{1}{\Delta x}\left(\mathcal{H}(x_{i+\frac12})-\mathcal{H}(x_{i-\frac12})\right),
\label{eq8}
\end{equation}
where $\mathcal{H}(x_{i+\frac12})$ is called the flux function. A SL finite difference WENO reconstruction is used to approximate the numerical flux function $\mathcal{H}(x_{i+\frac12})$ based on its cell averages
\begin{equation}
\bar{\mathcal{H}}_j=\f{1}{\Delta x}\int_{x_{j-\f12}}^{x_{j+\f12}}\mathcal{H}(\zeta)d\zeta, \quad j=i-p,\cdots,i+q,
\label{eq9}
\end{equation}
with
\begin{equation}
\bar{\mathcal{H}}_j=\mathcal{F}(x_j)=\int_{x^{\star}_j}^{x_j}u(\zeta,t^n)d\zeta, \notag
\end{equation}
where $x^{\star}_j = x_j-a \Delta t$ is the point tracing from the grid point $(x_j, t^{n+1})$ along characteristics back to the time level $t^n$. The last equality above is essential for the SL scheme, and is obtained via following characteristics. $\int_{x^{\star}_j}^{x_j}u(\zeta,t^n)d\zeta$ can be reconstructed from $\{u^n_i\}^{N_x}_{i=1}$ for each $j$. In summary, the SL finite difference WENO scheme procedure in evolving equation (\ref{eq3}) from $t_n$ to $t_{n+1}$ is as follows:
\begin{enumerate}
\item \vspace{-.1in}
At each of the grid points at time level $t^{n+1}$, say $(x_{i}, t^{n+1})$,  trace the characteristic back to time level $t_n$ at $x^\star_{i} = x_i - a \Delta t$.
\item \vspace{-.1in}
Reconstruct $\mathcal{F}(x_i) = \int_{x_i^\star}^{x_i} u(\zeta,t^n)d\zeta$ from $\{u^n_j\}_{j=1}^{N_x}$. We use $\mathcal{R}_1$
to denote this reconstruction procedure
\beq
\label{eq: R1}
\mathcal{R}_1[x_i^\star, x_i] (u^n_{i-p_1}, \cdots, u^n_{i+q_1}),
\eeq
to approximate $\mathcal{F}(x_i)$, where $(i-p_1, \cdots, i+q_1)$ indicates the stencil used in the reconstruction. $\mathcal{R}_1[a, b]$ indicates the reconstruction of $\int_{a}^b u(\zeta, t)d\zeta$.
\item \vspace{-.1in}
Reconstruct $\{\mathcal{H}(x_{i+\frac12})\}_{i=0}^{N_x}$ from $\{\bar{\mathcal{H}}_i\}_{i=1}^{N_x}$. We use $\mathcal{R}_2$
to denote this reconstruction procedure
\beq
\label{eq: R2}
\mathcal{H}_{i+\frac12}\doteq \mathcal{R}_2 (\bar{\mathcal{H}}_{i-p_2}, \cdots, \bar{\mathcal{H}}_{i+q_2}),
\eeq
to approximate $\mathcal{H}(x_{i+\frac12})$. Here $(i-p_2, \cdots, i+q_2)$ indicates the stencil used in the reconstruction.
\item \vspace{-.1in}
Update the solution $\{u^{n+1}_i\}_{i=1}^{N_x}$ by
\beq
\label{eq: 1d_cons_2}
u^{n+1}_i = u^n_i - \frac{1}{\Delta x}( \mathcal{H}_{i+\frac12} - \mathcal{H}_{i-\frac12}),
\eeq
with numerical fluxes $\mathcal{H}_{i\pm\frac12}$ computed in the previous step.
\end{enumerate}
When the reconstruction stencils in $\mathcal{R}_1$ and $\mathcal{R}_2$ above only involve one neighboring point value of the solution, then the scheme reduces to a first order monotone scheme when the time step is within CFL restriction.
%In fact, the scheme reduces to a first order monotone scheme when the time step is within CFL restriction. 
We let $h_{i+\f12}$ denote the first order flux.
The proposed SL finite difference scheme can be designed to be of high order accuracy by including more points in the  stencil for $\mathcal{R}_2 \circ \mathcal{R}_1$ (the composition of $\mathcal{R}_1$ and $\mathcal{R}_2$), to reconstruct the numerical flux
\begin{equation}
\label{weno_flux}
\mathcal{H}_{i+\frac12} = \mathcal{R}_2 \circ \mathcal{R}_1 (u^n_{i-p}, \cdots, u^n_{i+q}),
\end{equation}
where $(i-p, \cdots, i+q)$ indicates the stencil used in the reconstruction process.
The WENO mechanism can be introduced in the reconstruction procedures in order to realize a stable and non-oscillatory capture of fine scale structures. 
%The MPP flux limiters described in the next subsection are a convex combination of a first order monotone flux (denote it as $h_{i+\f12}$), and a high order numerical flux $\mathcal{H}_{i+\f12}$. $h_{i+\f12}$ only involves one neighboring point values of the solution, while $\mathcal{H}_{i+\f12}$ are including more points. 
In the Appendix, we provide formulas to obtain the high order fluxes $\mathcal{H}_{i+\frac12}$ in \eqref{weno_flux} for the fifth order SL finite difference WENO scheme.
%, when the time step is within CFL restriction. 
For more details, we refer to \cite{qiu2011bconservative}. 

%The MPP flux limiters described in the next subsection are a convex combination of a first order monotone flux (denote it as $h_{i+\f12}$), and a high order numerical flux $\mathcal{H}_{i+\f12}$. $h_{i+\f12}$ only involves one neighboring point values of the solution, while $\mathcal{H}_{i+\f12}$ are including more points. For example, a fifth order WENO reconstruction of $\mathcal{H}(x_{i+\frac12})$ is a convex combination of three 3rd order fluxes and is given by
%\begin{equation}
%\mathcal{H}(x_{i+\frac12})=\omega_1 \mathcal{H}^{(1)}(x_{i+\frac12}) + \omega_2 \mathcal{H}^{(2)}(x_{i+\frac12}) + \omega_3 \mathcal{H}^{(3)}(x_{i+\frac12})
%\label{eq10}
%\end{equation}
%where $\mathcal{H}^{(r)}(x_{i+\frac12})$ is reconstructed from the following three potential stencils for $r=1, 2, 3$,
%\begin{equation*}
%S_1=\{u^n_{i-2}, u^n_{i-1}, u^n_i\}, \quad S_2=\{u^n_{i-1}, u^n_{i}, u^n_{i+1}\} \text{ and }
%S_3=\{u^n_{i}, u^n_{i+1}, u^n_{i+2}\}.
%\end{equation*}
%The reconstructions of $h_{i+\f12}$ and $\mathcal{H}_{i+\f12}$ have two separate cases, $|a|\Delta t\le \Delta x$ and $|a|\Delta t > \Delta x$, and it also depends on the sign of $a$. We list the case of $|a|\Delta t\le \Delta x$ in the appendix, for details see \cite{qiu2011bconservative}. 

For the case of large time step $|a|\Delta t > \Delta x$, if $a>0$, $x^{\star}_i=x_i-a \Delta t$ is no longer
inside $(x_{i-1}, x_i]$, let $i^{\star}$ to be the index such that $x^{\star}_i\in (x_{i^{\star}-1}, x_{i^{\star}}]$ and $\xi=\frac{x_{i^{\star}}-x^{\star}_i}{\Delta x}$, we have
\begin{eqnarray}
h_{i+\f12}&=&\sum_{j=i^{\star}+1}^i \Delta x u^n_j + (x_{i^{\star}}-x^{\star}_i)u^n_{i^{\star}}, \label{eq13}\\
\mathcal{H}_{i+\f12}&=&\sum_{j=i^{\star}+1}^i \Delta x u^n_j + \mathcal{H}_{i^{\star}+\f12}, 
\label{eq14}
\end{eqnarray} 
where $\mathcal{H}_{i^{\star}+\f12}$ is reconstructed in the same fashion as (\ref{weno_flux}), but replacing $i$ by $i^{\star}$.
%, which is the numerical flux from a local WENO reconstruction around $i^{\star}$. 
Similarly, if $a<0$, let $i^{\star}$ to be the index such that $x^{\star}_i\in (x_{i^{\star}}, x_{i^{\star}+1}]$ and $\xi=\frac{x_{i^{\star}}-x^{\star}_i}{\Delta x}$, we have
\begin{eqnarray}
h_{i+\f12}&=&-\sum_{j=i+1}^{i^{\star}} \Delta x u^n_j + (x_{i^{\star}}-x^{\star}_i)u^n_{i^{\star}+1},\label{eq15}\\
\mathcal{H}_{i+\f12}&=&-\sum_{j=i+1}^{i^{\star}} \Delta x u^n_j + \mathcal{H}_{i^{\star}+\f12}. \label{eq16}
\end{eqnarray}
%The numerical scheme (\ref{eq: 1d_cons_2}) for the case of $|a|\Delta t > \Delta x$ 
%is equivalent to
%\begin{equation}
%u^{n+1}_i=u^n_{i^{\star}}-\frac{1}{\Delta x}\left(\mathcal{H}_{i^{\star}+\frac12}-\mathcal{H}_{i^{\star}-\frac12}\right).
%\label{eq17}
%\end{equation}
%that is, we only need the stencil around $i^{\star}$ to update the solution of $u^{n+1}_i$.
It is numerically demonstrated in \cite{qiu2011bconservative} that the proposed high order SL WENO method works very well in Vlasov simulations with extra large time step evolution.

\subsection{Parametrized MPP flux limiters}
\label{sec22}

In this subsection, we propose a parametrized MPP flux limiter, as proposed in \cite{mpp_xu, xiong2013parametrized}, for the high order SL finite difference WENO scheme (\ref{eq: 1d_cons_2}).

For simplicity, Let $u_{m}= \underset{x}{\text{min}}(u(x, 0))$ and $u_{M}=\underset{x}{\text{max}}(u(x, 0))$ as the minimum and maximum values of the initial condition. It has been known that the numerical solutions updated by \eqref{eq: 1d_cons_2} with the first order monotone flux satisfy the maximum principle. 
The MPP flux limiters are designed as modifying the high order numerical flux towards the first order monotone flux in the following way,
\begin{eqnarray}
\label{eq: fl}
\tilde{\mathcal H}_{i+\frac12} =\theta_{i+\frac12} ( \mathcal H_{i+\frac12}- h_{i+\frac12})+ h_{i+\frac12}
\end{eqnarray}
where 
%the first order monotone flux $h_{i+\frac12}$ preserves the MPP property. 
$\theta_{i+\frac12} \in [0, 1]$ is the parameter to be designed to take advantage of the first order monotone flux $ h_{i+\frac12}$ in the MPP property and to take advantage of the high order flux $ \mathcal H_{i+\frac12}$ in the high order accuracy. 

Below is a detailed procedure of designing $\theta_{i+\frac12}$, in order to guarantee the MPP property of the numerical solutions, yet to choose $\theta_{i+\frac12}$'s to be as close to $1$ as possible for high order accuracy. For each $\theta_{i+\frac12}$ in limiting the numerical flux $\tilde{\mathcal H}_{i+\frac12}$ as in equation \eqref{eq: fl}, we are looking for the upper bounds $\Lambda_{+\frac12, I_{i}}$ and $\Lambda_{-\frac12, I_{i+1}}$, such that,  for all
\beq
\label{eq: theta}
\theta_{i+\frac12}\in [0, \Lambda_{+\frac12, I_{i}}] \cap  [0, \Lambda_{-\frac12, I_{i+1}}], 
%\quad \forall i,
\eeq
the updated numerical solutions $u^{n+1}_{i}$ and $u^{n+1}_{i+1}$ by the SL WENO scheme \eqref{eq: 1d_cons_2} with the modified numerical fluxes \eqref{eq: fl} are within $[u_m, u_M]$.
Let
\[
\Gamma^M_i=u_{M}-u^n_i+ \f{1}{\Delta x} ( h_{i+\frac12}- h_{i-\frac12}),
\quad
\Gamma^m_i=u_{m}-u^n_i+ \f{1}{\Delta x} ( h_{i+\frac12}- h_{i-\frac12}),
\]
the MPP property of the first order monotone flux guarantees 
\beq
\label{eq: monotone}
\Gamma^M_i\ge 0, \quad \Gamma^m_i \le 0.
\eeq
To ensure $u^{n+1}_i \in [u_m, u_M]$ with $\tilde{\mathcal H}_{i+\frac12}$ as in equation \eqref{eq: fl}, it is sufficient to have
\begin{eqnarray}
\label {eq: umax}
\f{1}{\Delta x} \theta_{i-\frac12} ( \mathcal H_{i-\frac12}- h_{i-\frac12}) - \f{1}{\Delta x} \theta_{i+\frac12} ( \mathcal H_{i+\frac12}-
 h_{i+\frac12})-\Gamma^M_i &\le& 0, \\
\label {eq: umin}
\f{1}{\Delta x}  \theta_{i-\frac12} ( \mathcal H_{i-\frac12}- h_{i-\frac12}) - \f{1}{\Delta x} \theta_{i+\frac12} ( \mathcal H_{i+\frac12}-
 h_{i+\frac12})-\Gamma^m_i &\ge&0.
\end{eqnarray}
The linear decoupling of $\theta_{i\pm\frac12}$,
subject to the constraints
(\ref{eq: umax}) and (\ref{eq: umin}), is achieved via a case-by-case discussion based on the sign of
\[
F_{i\pm\frac12}\doteq \frac{1}{\Delta x}\left(\mathcal H_{i\pm\frac12}- h_{i\pm\frac12}\right),
\]
as outlined below.
\begin{enumerate}
\item
Assume
\[
\theta_{i-\frac12} \in [0, \Lambda^M_{-\frac12, I_i}], \quad
\theta_{i+\frac12} \in [0, \Lambda^M_{+\frac12, I_i}],
\]
where $\Lambda^M_{-\frac12, I_i}$ and $\Lambda^M_{+\frac12, I_i}$ are the upper bounds of $\theta_{i\pm\frac12}$,
subject to the constraint \eqref{eq: umax}.
\begin{enumerate}
\item If $F_{i-\frac12}\le 0$ and $F_{i+\frac12}\ge 0$,
\[(\Lambda^M_{-\frac12, I_i}, \Lambda^M_{+\frac12, I_i})=(1, 1).
\]
\item  If $F_{i-\frac12}\le 0$ and $F_{i+\frac12}< 0$,
\[
(\Lambda^M_{-\frac12, {I_i}}, \Lambda^M_{+\frac12, {I_i}})=(1, \min(1, \frac{\Gamma^M_i}{- F_{i+\frac12}})).
\]
\item  If $F_{i-\frac12}> 0$ and $F_{i+\frac12}\ge 0$,
\[
(\Lambda^M_{-\frac12, {I_i}}, \Lambda^M_{+\frac12, {I_i}})=(\min(1, \frac{\Gamma^M_i}{ F_{i-\frac12}}), 1).
\]
\item  If $F_{i-\frac12}> 0$ and $F_{i+\frac12}< 0$,
\bit
\item
If equation  \eqref{eq: umax} is satisfied with $(\theta_{i-\frac12}, \theta_{i+\frac12})=(1, 1)$, then
\[
(\Lambda^M_{-\frac12, {I_i}}, \Lambda^M_{+\frac12, {I_i}})=(1, 1).
\]
\item Otherwise,
%If equation \eqref{eq: umax} is not satisfied with $(\theta_{i-\frac12}, \theta_{i+\frac12})=(1, 1)$, then
\[
(\Lambda^M_{-\frac12, {I_i}}, \Lambda^M_{+\frac12, {I_i}})=(\frac{\Gamma^M_i}{ F_{i-\frac12}-  F_{i+\frac12}},\frac{\Gamma^M_i}{ F_{i-\frac12}-  F_{i+\frac12}} ).
\]
\eit
\end{enumerate}
\item Similarly assume
\[
\theta_{i-\frac12} \in [0, \Lambda^m_{-\frac12, I_i}], \quad
\theta_{i+\frac12} \in [0, \Lambda^m_{+\frac12, I_i}],
\]
where $\Lambda^m_{-\frac12, I_i}$ and $\Lambda^m_{+\frac12, I_i}$
are the upper bounds of $\theta_{i\pm\frac12}$, subject to the constraint \eqref{eq: umin}.
\begin{enumerate}
\item If $F_{i-\frac12}\ge 0$ and $F_{i+\frac12}\le 0$,
\[
(\Lambda^m_{-\frac12, I_i}, \Lambda^m_{+\frac12, I_i})=(1, 1).
\]
\item  If $F_{i-\frac12}\ge 0$ and $F_{i+\frac12}> 0$,
\[
(\Lambda^m_{-\frac12, {I_i}}, \Lambda^m_{+\frac12, {I_i}})=(1, \min(1, \frac{\Gamma^m_i}{- F_{i+\frac12}})).
\]
\item  If $F_{i-\frac12}< 0$ and $F_{i+\frac12}\le 0$,
\[
(\Lambda^m_{-\frac12, {I_i}}, \Lambda^m_{+\frac12, {I_i}})=(\min(1, \frac{\Gamma^m_i}{ F_{i-\frac12}}), 1).
\]
\item  If $F_{i-\frac12}< 0$ and $F_{i+\frac12}> 0$,
\bit
\item
If equation  \eqref{eq: umin} is satisfied with $(\theta_{i-\frac12}, \theta_{i+\frac12})=(1, 1)$, then
\[
(\Lambda^m_{-\frac12, {I_i}}, \Lambda^m_{+\frac12, {I_i}})=(1, 1).
\]
\item Otherwise,
%If equation  \eqref{eq: umin} is not satisfied with $(\theta_{i-\frac12}, \theta_{i+\frac12})=(1, 1)$, then
\[
(\Lambda^m_{-\frac12, {I_i}}, \Lambda^m_{+\frac12, {I_i}})=(\frac{\Gamma^m_i}{ F_{i-\frac12}-  F_{i+\frac12}},\frac{\Gamma^m_i}{ F_{i-\frac12}-  F_{i+\frac12}} ).
\]
\eit
\end{enumerate}
\end{enumerate}
Notice that the range of $\theta_{i+\frac12}$ \eqref{eq: theta} is determined by the need to ensure
both the upper bound \eqref{eq: umax} and the lower bound \eqref{eq: umin}
of numerical solutions in both cell $I_{i}$ and $I_{i+1}$. Thus
the locally defined limiting parameter is given as
\begin{eqnarray}
\label{eq: limit}
\theta_{i+\frac12}=\min(\Lambda_{+\frac12, {I_i}}, \Lambda_{-\frac12, {I_{i+1}}}),
\end{eqnarray}
with 
$\Lambda_{+\frac12, {I_i}} =
\min(\Lambda^M_{+\frac12, {I_i}}, \Lambda^m_{+\frac12, {I_{i}}})$ and 
$\Lambda_{-\frac12, {I_{i+1}}} =
\min(\Lambda^M_{-\frac12, {I_{i+1}}}, \Lambda^m_{-\frac12, {I_{i+1}}})$.
The modified flux in equation \eqref{eq: fl} with the $\theta_{i+\frac12}$ designed above ensures the maximum principle.
Such modified flux is consistent and preserves the maximum principle by its design, since it is a convex combination ($\theta_{i+\frac12}\in [0, 1]$) of a high order flux $ \mathcal H_{i+\frac12}$ and the first order flux $ h_{i+\frac12}$.
The mass conservation property is preserved, due to the flux difference form of the scheme.  

\begin{rem} (Machine zero) Due to rounding floating point arithmetic, equations \eqref{eq: monotone}, \eqref{eq: umax} and \eqref{eq: umin} are enforced only at the level of machine precision. In some extreme cases, numerical solutions may go out of bound, but only at the level of machine precision. 
\end{rem}

In \cite{xiong2013parametrized}, it was proved that the MPP flux limiter for the RK finite difference scheme can maintain up to third order accuracy both in space and in time for the nonlinear scalar equation $u_t+f(u)_x=0$. In Theorem \ref{thm: accuracy} below, we prove the proposed MPP limiter for the SL finite difference scheme can maintain up to fourth order accuracy for solving the linear advection equation (\ref{eq3}) {\em without any time step restriction}.
The generalization of the proof to maintain up to fifth order accuracy would be much more technical and will be investigated in our future work.

\begin{thm}
\label{thm: accuracy}
Consider solving the linear advection equation \eqref{eq3} using the finite difference SL method (\ref{eq: 1d_cons_2})
with a fourth order reconstruction procedure. Assume the global error,
\beq
e^n_j = |u^n_j - u(x_j, t^n)| = \mathcal{O}(\Delta x^4), \quad \forall n, j.
\label{assumption}
\eeq
Consider applying the parametrized MPP flux limiter to the numerical fluxes, 
then
\begin{eqnarray}
\left|\frac{1}{\Delta x}\left(\mathcal H_{j+\frac12}-\tilde{\mathcal H}_{j+\frac12}\right)\right|=\mathcal{O}(\Delta x^4), \quad \forall j,
\label{trc}
\end{eqnarray}
without any time step restriction.
\end{thm}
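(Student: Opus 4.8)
The plan is to reduce everything to a bound on a single quantity and then split into cases following the construction of the limiter. Since $\tilde{\mathcal H}_{j+\frac12}=\theta_{j+\frac12}\mathcal H_{j+\frac12}+(1-\theta_{j+\frac12})h_{j+\frac12}$, one has $\frac1{\Delta x}(\mathcal H_{j+\frac12}-\tilde{\mathcal H}_{j+\frac12})=(1-\theta_{j+\frac12})F_{j+\frac12}$ with $F_{j+\frac12}=\frac1{\Delta x}(\mathcal H_{j+\frac12}-h_{j+\frac12})$, so \eqref{trc} is equivalent to $(1-\theta_{j+\frac12})\,|F_{j+\frac12}|=\mathcal O(\Delta x^4)$. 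If $\theta_{j+\frac12}=1$ there is nothing to do; otherwise $\theta_{j+\frac12}$ is the smallest of the four bounds $\Lambda^{M,m}_{+\frac12,I_j}$, $\Lambda^{M,m}_{-\frac12,I_{j+1}}$, and that minimizer is $<1$, i.e. it comes from one of the binding branches of the case analysis underlying \eqref{eq: limit}, each of which gives $\theta_{j+\frac12}$ as an explicit ratio of the $\Gamma$'s to one or two of the $F$'s.

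Next I would pin down the sizes of the ingredients using only \eqref{assumption}. Write $\hat u^{n+1}_i$ for the first-order update $u^n_i-\frac1{\Delta x}(h_{i+\frac12}-h_{i-\frac12})$ and $u^{\mathrm{HO},n+1}_i$ for the unlimited high-order update. Then one has the exact identities $\Gamma^M_i=u_M-\hat u^{n+1}_i$, $\Gamma^m_i=u_m-\hat u^{n+1}_i$, and $u^{\mathrm{HO},n+1}_i=\hat u^{n+1}_i-F_{i+\frac12}+F_{i-\frac12}$. For the SL scheme $\hat u^{n+1}_i$ is precisely the piecewise-linear interpolant of the data evaluated at the characteristic foot $x^\star_i$ (checked directly from \eqref{eq13}--\eqref{eq16} after removing the integer shift, for either sign of $a$); hence $\hat u^{n+1}_i=u(x_i,t^{n+1})+\tfrac12\xi(1-\xi)\Delta x^2\,u_{xx}(\eta_i,t^n)+\mathcal O(\Delta x^3)$ with $\eta_i$ near $x^\star_i$, and $\hat u^{n+1}_i\in[u_m,u_M]$ by the monotone property; the fourth-order reconstruction together with \eqref{assumption} gives $u^{\mathrm{HO},n+1}_i=u(x_i,t^{n+1})+\mathcal O(\Delta x^4)$, so $u^{\mathrm{HO},n+1}_i\le u_M+C\Delta x^4$ and $\ge u_m-C\Delta x^4$; and a Taylor expansion of the fractional-shift pieces of $h_{i+\frac12}$ and $\mathcal H_{i+\frac12}$ against the exact flux yields $F_{i+\frac12}=c(\xi)\,\Delta x\,u_x(\zeta_i,t^n)+\mathcal O(\Delta x^2)$ for an $\mathcal O(1)$ factor $c(\xi)$ and $\zeta_i$ within $\mathcal O(\Delta x)$ of $x^\star_i$; in particular $F_{i\pm\frac12}=\mathcal O(\Delta x)$ in general.

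The heart of the proof is to combine these with the structure of the cases. Whenever $\theta_{j+\frac12}<1$, the binding branch forces the relevant gap $\Gamma^M_i$ (or $|\Gamma^m_i|$) to be smaller than $|F_{j-\frac12}|+|F_{j+\frac12}|=\mathcal O(\Delta x)$; since $\Gamma^M_i=u_M-\hat u^{n+1}_i$, this says that $u(\cdot,t^n)$ is within $\mathcal O(\Delta x)$ of $u_M$ at the characteristic foot. Feeding this into the smooth–extremum inequality $|u_x(x,t^n)|^2\le 2\|u_{xx}(\cdot,t^n)\|_\infty\,(u_M-u(x,t^n))$ and solving the resulting quadratic inequality against the sharp form of $F$ upgrades it to $u_M-u(\cdot,t^n)=\mathcal O(\Delta x^2)$ and $|u_x(\zeta_i,t^n)|=\mathcal O(\Delta x)$ near the foot, so that every $F_{i\pm\frac12}$ entering the formula for $\theta_{j+\frac12}$ is in fact $\mathcal O(\Delta x^2)$, while $\Gamma^M_i=(u_M-u(x_i,t^{n+1}))+\tfrac12\xi(1-\xi)|u_{xx}|\Delta x^2+\mathcal O(\Delta x^3)$ — the first-order diffusion giving an $\Omega(\Delta x^2)$ cushion. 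Substituting the explicit ratio for $\theta_{j+\frac12}$ into $(1-\theta_{j+\frac12})|F_{j+\frac12}|$ and simplifying with the identities above reduces it, in each branch, to $\pm(u^{\mathrm{HO},n+1}_i-u_M)$ (or the $u_m$ analogue) plus a combination of the surviving $F$'s and the diffusive $\Gamma$-term; since $|u^{\mathrm{HO},n+1}_i-u_M|\le C\Delta x^4$, it remains to show that the $\mathcal O(\Delta x^2)$ pieces cancel, which they do because the leading $\tfrac12\xi(1-\xi)u_{xx}\Delta x^2$ of the interpolation error matches, to that order, the corresponding combination of the $c(\xi)\Delta x\,u_x$ terms in $F_{j-\frac12}-F_{j+\frac12}$ near the extremum. (Equivalently, the bookkeeping shows that the would-be binding ratios are in fact $\ge 1$, so $\theta_{j+\frac12}=1$, except when the residual is genuinely $\mathcal O(\Delta x^4)$.) The lower-bound cases are identical with $u_m$ in place of $u_M$, the sign of $a$ only flips bookkeeping in \eqref{eq13}--\eqref{eq16}, and the integer-shift decomposition reduces the large-time-step situation to $\xi\in[0,1)$, so no CFL condition appears.

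The step I expect to be the main obstacle is this last one — verifying the $\mathcal O(\Delta x^2)$ (and, to be safe, $\mathcal O(\Delta x^3)$) cancellations across every branch of the case list. It is delicate precisely because it depends on the exact leading coefficients in the Taylor expansions of $F_{j\pm\frac12}$ and of the piecewise-linear interpolation error, and on the sign pattern labelling the cases, which is what prevents $\theta_{j+\frac12}$ from dropping below $1$ unless the cancellation is in force. This is presumably also why the extension to fifth order — where the analogous cancellations would be needed through $\mathcal O(\Delta x^4)$ — is left to future work.
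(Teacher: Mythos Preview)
Your reduction is correct and matches the paper's: in case~(b) one has $(1-\Lambda^M_{+\frac12,I_j})|F_{j+\frac12}|=-F_{j+\frac12}-\Gamma^M_j=I_1-u_M$, where $I_1=u_j-\xi(\hat H_{j+\frac12}-u_{j-1})$ is the ``mixed'' update (high-order flux on the right, first-order on the left), and the task is to show $I_1-u_M=\mathcal O(\Delta x^4)$ under $I_1>u_M$. Your bootstrapping via the smooth-extremum inequality to upgrade $F_{j\pm\frac12}$ from $\mathcal O(\Delta x)$ to $\mathcal O(\Delta x^2)$ is a nice observation and correctly localizes the problem to a neighborhood of an extremum.

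The gap is in the final step. You write that after substitution one is left with ``$\pm(u^{\mathrm{HO},n+1}_i-u_M)$ plus a combination of the surviving $F$'s'' and then invoke $|u^{\mathrm{HO},n+1}_i-u_M|\le C\Delta x^4$. But only the one-sided bound $u^{\mathrm{HO},n+1}_i\le u_M+C\Delta x^4$ holds; the absolute-value bound you use later is false in general (even near the extremum it is only $\mathcal O(\Delta x^2)$). Concretely, $I_1-u_M=(u^{\mathrm{HO},n+1}_j-u_M)-F_{j-\frac12}$ with $F_{j-\frac12}\le 0$, and there is nothing available to cancel the nonnegative $-F_{j-\frac12}$ against: the ``diffusive $\Gamma$-term'' you mention has already been consumed in forming $I_1$, and the $\mathcal O(\Delta x^3)$ contribution from $u'''$ carries no sign. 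Taylor expansion around $x_M$ gives $I_1-u_M=\tfrac12 u''_M\Delta x^2 R_2(\xi,z)+\tfrac16 u'''_M\Delta x^3 R_3(\xi,z)+\mathcal O(\Delta x^4)$ with explicit polynomials $R_2,R_3$ in $\xi$ and $z=(x_j-x_M)/\Delta x$; the quadratic piece has the right sign since $u''_M\le0$ and $R_2\ge0$, but the cubic piece does not simply ``cancel''.

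The paper closes this gap by a different mechanism than cancellation: a \emph{comparison} argument. It constructs $I_2=\beta_1\,u(x_M)+(1-\beta_1)\,u(x_M+c_1\Delta x)\le u_M$ (a genuine convex combination of exact solution values) whose Taylor expansion has cubic coefficient \emph{exactly} equal to $R_3$ and quadratic coefficient $\le R_2$. Then $I_1-I_2=\tfrac12 u''_M\Delta x^2\bigl(R_2-(1-\beta_1)c_1^2\bigr)+\mathcal O(\Delta x^4)\le \mathcal O(\Delta x^4)$, hence $I_1\le u_M+\mathcal O(\Delta x^4)$. The nontrivial content is that such $\beta_1\in[0,1]$ and bounded $c_1$ exist for all $\xi,z\in[0,1]$, which reduces to the polynomial inequality $\min_z R_2^3/R_3^2\le 1$ with controlled degeneracy; the paper verifies this (and the boundary cases $x_M=x_{j-1}$, $x_M=x_j$, handled by separate comparison quantities $I_4$, $I_5$) via explicit formulas checked symbolically. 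This is exactly the ``delicate'' step you flag, but it is not a cancellation of matching leading coefficients---it is a sign-plus-matching argument that hinges on a nontrivial algebraic inequality specific to the fourth-order stencil, which is why the fifth-order case does not follow by the same bookkeeping.
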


\begin{proof} 
Without loss of generality, we consider $a=1$ with $0\le \xi=\frac{\Delta t}{\Delta x}\le 1$.
The case of $\Delta t > \Delta x$ can be reduced to $\Delta t \le \Delta x$ by shifting the numerical solution with several whole grid points.  
%is only a translating of the stencil around $i$ to be the local stencil around $i^{\star}$. 
Without specifying, in the following, we use $u_j$ instead of $u^n_j$ and use $u(x)$ instead of $u(x,t^n)$. Since the difference between $u(x_j, t^n)$ and $u^n_j$ is of high order due to assumption \eqref{assumption}, we use $u(x_j, t^n)$ and $u^n_j$ interchangeably when such high order difference allows. The high order flux $\mathcal{H}_{j+\f12}$ is taken to be (\ref{weno_flux}) with a 4th order finite difference reconstruction, the first order monotone flux is $h_{j-\f12}=\Delta t u_{j-1}$. 

We mimic the proof in \cite{xiong2013parametrized} and only consider the most challenging case: case (b) for the maximum
value part. The proof for other cases would be similar to that in \cite{xiong2013parametrized}.

We consider case (b) when
\begin{eqnarray}
\label{caseb}
\Lambda_{+\frac12, {I_j}}=\frac{\Gamma^M_j}{-F_{j+\frac12}}<1.
\end{eqnarray}
%with
%\begin{eqnarray}
%\frac{1}{\Delta x}\left(\mathcal{H}_{j+\frac12}-\tilde{\mathcal{H}}_{j+\frac12}\right) = \Gamma^M_j+ F_{j+\frac12}=u_M-\left(u_j -\xi (\hat{H}_{j+\f12}-u_{j-1})\right),
%\end{eqnarray}
%where $\hat{H}_{j+\f12}=\frac{1}{\Delta t}\mathcal{H}_{j+\f12}$. 
To prove (\ref{trc}), 
it suffices to prove
\beq
u_M-\left(u_j-\xi(\hat{H}_{j+\frac{1}{2}}-u_{j-1})\right) = \mathcal{O}(\Delta x^4),
\label{aeq401}
\eeq
if $u_M-\left(u_j-\xi(\hat{H}_{j+\frac{1}{2}}-u_{j-1})\right)<0$. For the SL method, we have 
\beq
\hat{H}_{j+\frac{1}{2}}=\frac{1}{\Delta t}\mathcal{H}_{j+\f12}=\frac{1}{\Delta t}\int_{t^n}^{t^{n+1}}H(x_{j+\frac{1}{2}},\tau)d\tau,
\label{aeq402}
\eeq
where $H(x,t)$ is defined by
\beq
u(x,t)=\frac{1}{\Delta x}\int^{x+\frac{\Delta x}{2}}_{x-\frac{\Delta x}{2}}H(\xi,t)d\xi.
\label{aeq403}
\eeq
$H(x,t)$ follows the same characteristics as the linear advection equation, hence
\beq
\hat{H}_{j+\frac{1}{2}}
=\frac{1}{\Delta t}\int_{0}^{\Delta t}H(x_{j+\frac{1}{2}}-\tau,t^n)d\tau.
\label{aeq404}
\eeq

Let $I_1=u_j-\xi(\hat{H}_{j+\frac{1}{2}}-u_{j-1})$. We approximate $H(x,t^n)$ by a 4th order reconstructed polynomial from the solution based on the stencil $S=\{u_{j-2},u_{j-1}, u_j, u_{j+1}\}$, with which
%we can write $I_1$ to be
\begin{eqnarray}
I_1&=& u_j-\xi\left(\frac{1}{\Delta t}\int_{0}^{\Delta t}H(x_{j+\frac{1}{2}}-\tau,t^n)d\tau-u_{j-1}\right)\nonumber \\
%&=& u_j-\xi\left(\frac{1}{\xi\Delta x}\int_{x_{j+\frac{1}{2}-\xi \Delta x}}^{x_{j+\frac{1}{2}}}H(\xi,t^n)d\xi-u_{j-1}\right) \nonumber \\
&=&\frac{1}{24}(26u_j-10u_{j-1}+2u_{j-2}+6u_{j+1})+\frac{\xi}{24}(9u_j+3u_{j-1}-u_{j-2}-11u_{j+1})\nonumber \\
&&+\frac{\xi^2}{24}(-14u_j+10u_{j-1}-2u_{j-2}+6u_{j+1})+\frac{\xi^3}{24}(3u_j-3u_{j-1}+u_{j-2}-u_{j+1}). \label{aeq406}
\end{eqnarray}

We first consider the case when the maximum point $x_M \in (x_{j-1}, x_{j})$ at $t^n$, with $u_M=u(x_M)$, $u'_M=0$ and $u''_M\le 0$. $(x_{j-1}, x_j)$ is the dependent domain for the exact solution $u(x_j,t^{n+1})$ when $0\le\xi\le1$. We perform Taylor expansions around $x_M$ up to $4$th order and obtain
\begin{eqnarray}
u_{j+1}&=&u_M+u'_M(x_j-x_M+\Delta x)+u''_M\frac{(x_j-x_M+\Delta x)^2}{2}+u'''_M\frac{(x_j-x_M+\Delta x)^3}{6}+\mathcal{O}(\Delta x^4),\nonumber\\
u_j&=&u_M+u'_M(x_j-x_M)+u''_M\frac{(x_j-x_M)^2}{2}+u'''_M\frac{(x_j-x_M)^3}{6}+\mathcal{O}(\Delta x^4), \nonumber \\
u_{j-1}&=&u_M+u'_M(x_j-x_M-\Delta x)+u''_M\frac{(x_j-x_M-\Delta x)^2}{2}+u'''_M\frac{(x_j-x_M-\Delta x)^3}{6}+\mathcal{O}(\Delta x^4),\nonumber \\
u_{j-2}&=&u_M+u'_M(x_j-x_M-2\Delta x)+u''_M\frac{(x_j-x_M-2\Delta x)^2}{2}+u'''_M\frac{(x_j-x_M-2\Delta x)^3}{6}+\mathcal{O}(\Delta x^4).\nonumber 
\end{eqnarray}
Since $u'_M=0$, we can write $I_1$ to be
\beq
I_1=u_M+u''_M\frac{\Delta x^2}{2}R_2+u'''_M\frac{\Delta x^3}{6}R_3+\mathcal{O}(\Delta x^4),
\label{aeq407}
\eeq
where
\begin{eqnarray}
R_2&=&\frac{5}{6}\xi+\frac{1}{2}\xi^2-\frac{1}{3}\xi^3+(\xi^2-3\xi)z+z^2, \nonumber \\
R_3&=&\frac{1}{4}(-4\xi+\xi^2-2\xi^3+\xi^4)+\frac{1}{4}(10\xi+6\xi^2-4\xi^3)z
   +\frac{1}{4}(-18\xi+6\xi^2)z^2+z^3, \nonumber
\end{eqnarray}
and $z=(x_j-x_M)/\Delta x \in (0,1)$.

We consider a quantity denoted by $I_2$, which can be written as follows
\begin{eqnarray}
I_2&=&\beta_1 u(x_M)+(1-\beta_1)u(x_M+c_1\Delta x) \nonumber \\
   &=& u_M+u''_M\frac{\Delta x^2}{2} (1-\beta_1)c_1^2 + u'''_M\frac{\Delta x^3}{6}(1-\beta_1)c_1^3+\mathcal{O}(\Delta x^4),
   \label{aeq408}
\end{eqnarray}
with parameters $\beta_1$ and $c_1$ to be determined. If $0\le\beta_1\le 1$, $I_2\le u_M$.

Since $u''_M\le 0$, if we can find $c_1= \mathcal{O}(1)$, $0\le \beta_1\le 1$ such that
\begin{eqnarray}
(1-\beta_1)c_1^2 \le R_2, \qquad (1-\beta_1)c_1^3 = R_3,
\label{aeq409}
\end{eqnarray}
we would have
\begin{eqnarray}
| I_1 - u_M | = \mathcal{O}(\Delta x^4),
\label{aeq410}
\end{eqnarray}
when (\ref{aeq407}) is compared with \eqref{aeq408} under the assumption (\ref{caseb}), thus proving (\ref{aeq401}) in this case. In the following,
to determine the parameters $\beta_1$ and $c_1$ satisfying (\ref{aeq409}), we first need $(1-\beta_1)R_3^2\le R_2^3$, that is
\begin{eqnarray}
1-\beta_1\le \min_{0\le z \le 1}\left(\frac{R_2^3}{R_3^2}\right), \quad \text{with} \quad 0\le\xi\le 1.
\label{aeq411}
\end{eqnarray}
Using Mathematica, we have
\begin{eqnarray}
\min_{0\le z \le 1}\left(\frac{R_2^3}{R_3^2}\right)=
\begin{cases}
\frac{54-108\xi+72\xi^2-16\xi^3}{54-81\xi+27\xi^2}, &\quad 0\le\xi\le \min(|z|^3,\f12), \\
\frac{1}{2}-\frac{3}{2}\sqrt{\frac{3}{2}}\sqrt{\frac{2-7\xi+9\xi^2-5\xi^3+\xi^4}{27-32\xi+9\xi^2}}
, &\quad \min(|z|^3,\f12)\le\xi\le 1,
\end{cases}
\label{aeq412}
\end{eqnarray}
and
\begin{eqnarray}
\frac{64}{81}\le\frac{54-108\xi+72\xi^2-16\xi^3}{54-81\xi+27\xi^2}\le 1, &\quad\text{if}\quad 0\le\xi \le \frac{1}{2},
\label{aeq413} \\
0\le\frac{1}{2}-\frac{3}{2}\sqrt{\frac{3}{2}}\sqrt{\frac{2-7\xi+9\xi^2-5\xi^3+\xi^4}{27-32\xi+9\xi^2}}\le 1, &\quad\text{if}\quad 0\le\xi\le 1.
\label{aeq414}
\end{eqnarray}
Thus, we can choose $\beta_1=1-\min_{0\le z \le 1}\left(\frac{R_2^3}{R_3^2}\right) \in [0, 1]$.  
%with $0\le \beta_1 \le 1$.
Let $c_1=\left(\frac{R_3}{1-\beta_1}\right)^{1/3}$. Below, we verify that $|c_1|$ is bounded.

If $0\le\xi \le\min(|z|^3, \f12)$, from (\ref{aeq413}), we have
\begin{eqnarray*}
|c_1|&\le&\left(\frac{\max_{0\le\xi\le 1, 0\le z\le 1}|R_3|}{\min_{0\le\xi\le\frac{1}{2}}(1-\beta_1)}\right)^{1/3} 
\le\left(\frac{81}{64}\max_{0\le\xi\le,0\le z\le 1}|R_3|\right)^{1/3}
%\le\left(\frac{81}{64} 12\right)^{1/3} 
\le 3.\\
\end{eqnarray*}
Otherwise if $\min(|z|^3, \f12) \le\xi\le 1$, we have
\begin{eqnarray}
\frac{R_3}{1-\beta_1}=\Lambda(\xi)(\frac{1}{4}(-4+\xi-2\xi^2+\xi^3)+\frac{1}{4}(10+6\xi-4\xi^2)z +\frac{1}{4}(-18+6\xi)z^2+\frac{z^3}{\xi}),
\label{aeq415}
\end{eqnarray}
with
\begin{eqnarray}
\frac{216}{125}\le\Lambda(\xi)=\frac{\xi}{\frac{1}{2}-\frac{3}{2}\sqrt{\frac{3}{2}}\sqrt{\frac{2-7\xi+9\xi^2-5\xi^3+\xi^4}{27-32\xi+9\xi^2}}}\le 2, \label{aeq416} \\
\frac{1}{4}\le|\frac{1}{4}(-4+\xi-2\xi^2+\xi^3)+\frac{1}{4}(10+6\xi-4\xi^2)z
+\frac{1}{4}(-18+6\xi)z^2|\le 8,
\label{aeq417} \\
\left|\frac{z^3}{\xi}\right| \le 2.
\label{aeq418}
\end{eqnarray}
In this case we also have $|c_1|\le(2(8+2))^{1/3}\le 3$. 

Now if $x_M \notin I_j$, however there is a local maximum point $x^{loc}_M$ inside $(x_{j-1}, x_{j})$, the above analysis still holds. Therefore we consider that $u(x)$ reaches its local maximum $u^{loc}_M$ at $x^{loc}_M=x_{j-1}$ with $u'_{j-1} \le 0$ and $z=(x_j-x^{loc}_M)/\Delta x=(x_j-x_{j-1})/\Delta x=1$. Based on the Taylor expansion, we can rewrite (\ref{aeq406}) to be
\beq
I_1=\f12(2-\xi)(1-\xi)I_3+\left(1-\f12(2-\xi)(1-\xi)\right)u_{j-1}+\mathcal{O}(\Delta x^4),
\label{aeq419}
\eeq
with
\beq
I_3=u_{j-1}+u'_{j-1}\Delta x + u''_{j-1}\frac{\Delta x^2}{2}\frac{3-2\xi}{3}+u'''_{j-1}\frac{\Delta x^3}{6}\f12(2-\xi)(1-\xi).
\label{aeq420}
\eeq
Similar to (\ref{aeq408}), we consider $I_4$ with the following form
\beqa
I_4 &=& \beta_2 u(x_{j-1}-\Delta x)+(1-\beta_2-\beta_3)u(x_{j-1})+\beta_3 u(x_{j-1}+\Delta x) \nonumber \\
    &=& u_{j-1}+(\beta_3-\beta_2)u'_{j-1}\Delta x+(\beta_2+\beta_3)u''_{j-1}\frac{\Delta x^2}{2}
    +(\beta_3-\beta_2)u'''_{j-1}\frac{\Delta x^3}{6} + \mathcal{O}(\Delta x^4), \nonumber \\
\label{aeq421}
\eeqa
with $\beta_2, \beta_3, \beta_2+\beta_3\in[0, 1]$ to be determined. Since $u'_{j-1}\le 0$, comparing (\ref{aeq420}) with (\ref{aeq421}), we would like to find $\beta_2$, $\beta_3$ satisfying
\beq
(\beta_3-\beta_2)\le 1, \quad \beta_2+\beta_3=\frac{3-2\xi}{3}, \quad \beta_3-\beta_2=\f12(2-\xi)(1-\xi),
\label{aeq422}
\eeq
from which we would have $|I_3-u_M|=\mathcal{O}(\Delta x^4)$ under the assumption that $I_1\ge u_M$ in (\ref{aeq419}), and $I_4\le u_M$.  It can then be established that $|I_1-u_M|=\mathcal{O}(\Delta x^4)$ if $I_1\ge u_M$. (\ref{aeq422}) can be easily achieved with 
%$\beta_2$ and $\beta_3$ setting to be
\beqa
\beta_2&=&\f12\left(\frac{3-2\xi}{3}+\f12(2-\xi)(1-\xi)\right)=\f{1}{12}(3-\xi)(4-3\xi) 
\label{aeq423}, \\
\quad \beta_3&=&\f12\left(\frac{3-2\xi}{3}-\f12(2-\xi)(1-\xi)\right)=\f{1}{12}\xi(5-3\xi).
\label{aeq424}
\eeqa

Finally, if $x^{loc}_M=x_j$ with $u'_j\ge0$ and $z=(x_j-x^{loc}_M)/\Delta x=(x_j-x_{j})/\Delta x=0$, we have
\beq
I_1=u_j+u'_j\Delta x R_1+u''_j\frac{\Delta x^2}{2} R_2 +u'''_j\frac{\Delta x^3}{6}R_3+\mathcal{O}(\Delta x^4),
\label{aeq425}
\eeq
where
\beq
R_1=-\f12\xi(3-\xi),\quad
R_2=\f16\xi(5-2\xi)(1+\xi), \quad
R_3=-\f14\xi(4-\xi+2\xi^2-\xi^3). \nonumber
\eeq
Now let
\beqa
I_5&=&\beta_4 u_j+(1-\beta_4)u(x_j+c_2\Delta x) \nonumber \\
   &=&u_j+(1-\beta_4)c_2 u'_j\Delta x+(1-\beta_3)c_2^2 u''_j\frac{\Delta x^2}{2}
   +(1-\beta_4)c_2^3 u'''_j\frac{\Delta x^3}{6} + \mathcal{O}(\Delta x^4). \nonumber \\
\label{aeq426}
\eeqa
Similar to the other cases, since $u'_j\ge 0$, 
if we can find $\beta_4 \in [0, 1]$ and $c_2$ bounded, such that,
\beqa
(1-\beta_4)c_2 \ge R_1, \quad
(1-\beta_4)c_2^2= R_2, \quad
(1-\beta_4)c_2^3= R_3, \label{aeq427}
\eeqa
we would have $|I_1-u_M|=\mathcal{O}(\Delta x^4)$, if $I_1\ge u_M$.
(\ref{aeq427}) can be satisfied by letting
\beq
c_2=R_3/R_2, \quad \beta_4=1-R_2/c_2^2,
\label{aeq430}
\eeq
with $|c_2| \le 6/5$ and $\beta_4\in[0,1]$. To this end, we have proved (\ref{aeq401}).
\end{proof}

\section{Numerical simulations}
\label{sec5}
\setcounter{equation}{0}
\setcounter{figure}{0}
\setcounter{table}{0}

In this section, we test the 5th order SL finite difference WENO scheme with
the parametrized MPP flux limiters, denoted as ``SL-WL", to solve the Vlasov equation. 
We will compare it with the 5th order finite difference WENO scheme with the 4th order RK  time discretization and MPP flux limiters in \cite{xiong2013parametrized}, denoted as ``RK-WL". These two methods without MPP flux limiters are denoted to be ``SL-WO" and ``RK-WO", respectively. A fast Fourier transform (FFT) method is used to solve the 1D Poisson equation. For the Vlasov equation, the time step at time level $t^n$ is chosen to be $\Delta t=CFL /(\alpha_x/\Delta x + \alpha^n_v/\Delta v)$, where $\alpha_x=\max_{j} |v_j|$ and $\alpha^n_v=\max_{i}|E(x_i,t^n)|$, and we take the CFL number
to be $0.8$ for the SL method and $0.6$ for the RK time method, unless specified. 

\begin{exa}
In the first example, we take the advection equation
\begin{equation}
\partial_t f + \nabla_x f + \nabla_v f=0,
\label{advect}
\end{equation}
with initial condition
\begin{equation}
f(0,x,v)=\sin^4(x+v),
\label{adinit}
\end{equation}
and periodic boundary condition on both directions on the domain $[0, 2\pi]\times[-\pi,\pi]$, to test the orders of accuracy for the SL WENO scheme with the parametrized MPP flux limiters.
In Table \ref{tab1}, the $L^1$ and $L^{\infty}$ errors and orders are shown for the
SL WENO scheme at time $T=1$, here the time step is $\Delta t=CFL \Delta x/2$ with $CFL=0.8$ and $CFL=2.2$. The expected $5th$ order accuracy is observed. 
With limiters, the numerical solutions are strictly positive with $f_{min} \ge 0$.
%, however they are not if without limiters.
%\begin{table}
%\centering
%\caption{$L^1$ and $L^{\infty}$ errors and orders for the advection equation (\ref{advect})
%with initial condition (\ref{adinit}) at $T=1$. $CFL=0.8$}
%\vspace{0.2cm}
%  \begin{tabular}{|c|c|c|c|c|c|c|}
%    \hline
%    &   &  $L^1$ error &    order& $L^\infty$ error & order &  $f_{min}$ \\ \hline
%\multirow{4}{*}{WL} & 40 &     2.74E-03 &       --&     5.53E-03 &       --&     1.97E-13 \\ \cline{2-7}
%   & 80 &     2.35E-04 &     3.55&     7.95E-04 &     2.80&     3.48E-05 \\ \cline{2-7}
%   & 160 &     7.16E-06 &     5.04&     3.51E-05 &     4.50&     1.09E-13 \\ \cline{2-7}
%   & 320 &     1.95E-07 &     5.20&     8.99E-07 &     5.29&     8.60E-09 \\ \hline
%\multirow{4}{*}{WO} & 40 &     2.86E-03 &       --&     6.06E-03 &       --&    -1.76E-03 \\ \cline{2-7}
%    &   80 &     2.68E-04 &     3.42&     9.67E-04 &     2.65&    -2.20E-04 \\ \cline{2-7}
%    &  160 &     7.52E-06 &     5.15&     3.42E-05 &     4.82&    -1.03E-05 \\ \cline{2-7}
%    &  320 &     1.95E-07 &     5.27&     8.99E-07 &     5.25&    -3.75E-08 \\ \hline
%  \end{tabular}
%\label{tab11}
%\end{table}

\begin{table}
\centering
\caption{$L^1$ and $L^{\infty}$ errors and orders for the advection equation (\ref{advect})
with initial condition (\ref{adinit}) at $T=1$. ``WL" denotes the scheme with limiters, ``WO" denotes the scheme without limiters.
``$f_{min}$" is the minimum numerical solution of the SL WENO schemes. Mesh size $N_x=N_v=N$.}
\vspace{0.2cm}
  \begin{tabular}{|c|c|c|c|c|c|c|c|}
    \hline
&    & $N$  &  $L^1$ error &    order& $L^\infty$ error & order &  $f_{min}$ \\ \hline
\multirow{8}{*}{$CFL=0.8$}&\multirow{4}{*}{WL} & 
         40 &     2.74E-03 &       --&     5.53E-03 &       --&     1.97E-13 \\ \cline{3-8}
&    &   80 &     2.35E-04 &     3.55&     7.95E-04 &     2.80&     3.48E-05 \\ \cline{3-8}
&    &  160 &     7.16E-06 &     5.04&     3.51E-05 &     4.50&     1.09E-13 \\ \cline{3-8}
&    &  320 &     1.95E-07 &     5.20&     8.99E-07 &     5.29&     8.60E-09 \\ \cline{2-8}
&\multirow{4}{*}{WO} & 
         40 &     2.86E-03 &       --&     6.06E-03 &       --&    -1.76E-03 \\ \cline{3-8}
&    &   80 &     2.68E-04 &     3.42&     9.67E-04 &     2.65&    -2.20E-04 \\ \cline{3-8}
&    &  160 &     7.52E-06 &     5.15&     3.42E-05 &     4.82&    -1.03E-05 \\ \cline{3-8}
&    &  320 &     1.95E-07 &     5.27&     8.99E-07 &     5.25&    -3.75E-08 \\ \hline
\multirow{8}{*}{$CFL=2.2$}&\multirow{4}{*}{WL} & 
         40 &     1.68E-03 &       --&     3.75E-03 &       --&     1.94E-13 \\ \cline{3-8}
&    &   80 &     9.87E-05 &     4.09&     3.76E-04 &     3.32&     1.25E-05 \\ \cline{3-8}
&    &  160 &     2.90E-06 &     5.09&     1.51E-05 &     4.64&     3.64E-07 \\ \cline{3-8}
&    &  320 &     7.61E-08 &     5.25&     3.55E-07 &     5.41&     5.27E-10 \\ \cline{2-8}
&\multirow{4}{*}{WO} &     
         40 &     1.80E-03 &       --&     3.83E-03 &       --&    -8.45E-04 \\ \cline{3-8}
&    &   80 &     1.20E-04 &     3.91&     4.91E-04 &     2.96&    -1.21E-04 \\ \cline{3-8}
&    &  160 &     2.94E-06 &     5.35&     1.50E-05 &     5.03&    -3.21E-06 \\ \cline{3-8}
&    &  320 &     7.61E-08 &     5.27&     3.55E-07 &     5.40&    -1.48E-08 \\ \hline
  \end{tabular}
\label{tab1}
\end{table}

%\begin{table}
%\centering
%\caption{$L^1$ and $L^{\infty}$ errors and orders for the advection equation (\ref{advect})
%with initial condition (\ref{adinit}) at $T=1$. $CFL=2.2$}
%\vspace{0.2cm}
%  \begin{tabular}{|c|c|c|c|c|c|c|}
%    \hline
%    & N  &  $L^1$ error &    order& $L^\infty$ error & order &  $f_{min}$ \\ \hline
%\multirow{4}{*}{WL} &     40 &     1.69E-03 &       --&     3.80E-03 &       --&     1.92E-13 \\ \cline{2-7}
%  &  80 &     9.38E-05 &     4.17&     3.57E-04 &     3.41&     1.58E-05 \\ \cline{2-7}
%  & 160 &     2.82E-06 &     5.06&     1.49E-05 &     4.59&     3.22E-07 \\ \cline{2-7}
%  & 320 &     7.22E-08 &     5.29&     3.37E-07 &     5.46&     1.86E-10 \\ \hline
%\multirow{4}{*}{WO} &     40 &     2.86E-03 &       --&     6.06E-03 &       --&    -1.76E-03 \\ \cline{2-7}
%  &  80 &     2.68E-04 &     3.42&     9.67E-04 &     2.65&    -2.20E-04 \\ \cline{2-7}
%  & 160 &     7.52E-06 &     5.15&     3.42E-05 &     4.82&    -1.03E-05 \\ \cline{2-7}
%  & 320 &     1.95E-07 &     5.27&     8.99E-07 &     5.25&    -3.75E-08 \\ \hline
%  \end{tabular}
%\label{tab12}
%\end{table}

\end{exa}

\begin{exa}
In the second example, we consider the rigid body rotating problem 
\begin{equation}
\partial_t f - v \nabla_x f + x \nabla_v f=0.
\label{rigid}
\end{equation}
First we choose a similar smooth initial condition as in \cite{guo2013hybrid}
\begin{equation}
f(0,x,v)=
\begin{cases}
\cos^6(r), \quad & r<\pi/2, \\
0, \quad & \text{otherwise}.
\end{cases}
\label{rigidinit}
\end{equation}
on the computational domain $[-\pi,\pi]\times[-\pi,\pi]$ with periodic boundary condition on both directions, where $r=\sqrt{x^2+y^2}$. Similarly as the first example, we use the SL WENO scheme with or without the parametrized MPP flux limiters to compute up to time $T=2\pi$ for a period. In Table \ref{tab12}, the $L^1$ and $L^{\infty}$ errors and orders are shown for the time step 
$\Delta t=CFL \Delta x/(2\pi)$ with $CFL=0.8$ and $CFL=2.2$. Due to the symmetric property of the initial condition, the 5th order spatial error dominates. With the limiters, the numerical solutions are strictly positive.
%, and they are not if without limiters.

\begin{table}
\centering
\caption{$L^1$ and $L^{\infty}$ errors and orders for the rigid body rotating problem (\ref{rigid}) with initial condition (\ref{rigidinit}) at $T=2 \pi$. ``WL" denotes the scheme with limiters, ``WO" denotes the scheme without limiters.
``$f_{min}$" is the minimum numerical solution of the SL WENO schemes. Mesh size $N_x=N_v=N$.}
\vspace{0.2cm}
  \begin{tabular}{|c|c|c|c|c|c|c|c|}
    \hline
&    & $N$  &  $L^1$ error &    order& $L^\infty$ error & order &  $f_{min}$ \\ \hline
\multirow{8}{*}{$CFL=0.8$}&\multirow{4}{*}{WL} 
     &  40 &     5.56E-04 &       --&     1.56E-02 &       --&     1.22E-13 \\ \cline{3-8}
&    &  80 &     3.69E-05 &     3.91&     7.17E-04 &     4.44&     4.44E-14 \\ \cline{3-8}
&    & 160 &     1.32E-06 &     4.80&     2.31E-05 &     4.95&     4.75E-16 \\ \cline{3-8}
&    & 320 &     3.82E-08 &     5.11&     7.22E-07 &     5.00&     1.53E-19 \\ \cline{2-8}
&\multirow{4}{*}{WO} 
     &  40 &     5.68E-04 &       --&     1.56E-02 &       --&    -8.32E-05 \\ \cline{3-8}
&    &  80 &     4.05E-05 &     3.81&     7.17E-04 &     4.44&    -4.48E-05 \\ \cline{3-8}
&    & 160 &     1.33E-06 &     4.93&     2.31E-05 &     4.95&    -7.41E-08 \\ \cline{3-8}
&    & 320 &     3.83E-08 &     5.11&     7.22E-07 &     5.00&    -2.63E-09 \\ \hline
\multirow{8}{*}{$CFL=2.2$}&\multirow{4}{*}{WL} 
     &  40 &     5.31E-04 &       --&     1.55E-02 &       --&     1.83E-13 \\ \cline{3-8}
&    &  80 &     3.62E-05 &     3.88&     7.07E-04 &     4.46&     8.45E-15 \\ \cline{3-8}
&    & 160 &     1.29E-06 &     4.81&     2.28E-05 &     4.96&     7.54E-18 \\ \cline{3-8}
&    & 320 &     3.72E-08 &     5.11&     7.10E-07 &     5.00&     1.89E-23 \\ \cline{2-8}
&\multirow{4}{*}{WO} 
     &  40 &     5.44E-04 &       --&     1.55E-02 &       --&    -8.45E-05 \\ \cline{3-8}
&    &  80 &     3.97E-05 &     3.78&     7.07E-04 &     4.46&    -4.45E-05 \\ \cline{3-8}
&    & 160 &     1.29E-06 &     4.94&     2.28E-05 &     4.96&    -7.03E-08 \\ \cline{3-8}
&    & 320 &     3.73E-08 &     5.11&     7.10E-07 &     5.00&    -2.49E-09 \\ \hline
  \end{tabular}
\label{tab12}
\end{table}

Then we consider the initial condition that includes a slotted disk, a cone as well as a smooth hump
on the computational domain $[-\pi,\pi]\times[-\pi,\pi]$ as in \cite{xiong2013parametrized}, see Fig. \ref{fig01}. For this example, in order to clearly observe the difference of the scheme between with and without the MPP flux limiters, we use the SL WENO scheme but with the linear weights (\ref{linearw1}) and (\ref{linearw2})
instead of the nonlinear weights (\ref{nonlinearw}).
In Fig. \ref{fig02}, we display the cuts of the numerical solution at $T=12\pi$ under the time step $\Delta t=CFL \Delta x/(2\pi)$ with $CFL=0.8$. With limiters, the numerical
solutions are within the range of $[0, 1]$; they are not if without limiters, similar to the results of the RK WENO method with linear weights in \cite{xiong2013parametrized}. For the results of the SL WENO scheme
with nonlinear weights, the minimum and maximum values of the numerical solutions are $0.0000000000000$ and  $0.9999999999998$ up to $13$ decimal places, while they are $-0.0020444053893$ and $1.014419874020$ if without limiters. We omit the figures to save space.          

\begin{figure}
\centering
\includegraphics[width=3in,clip]{./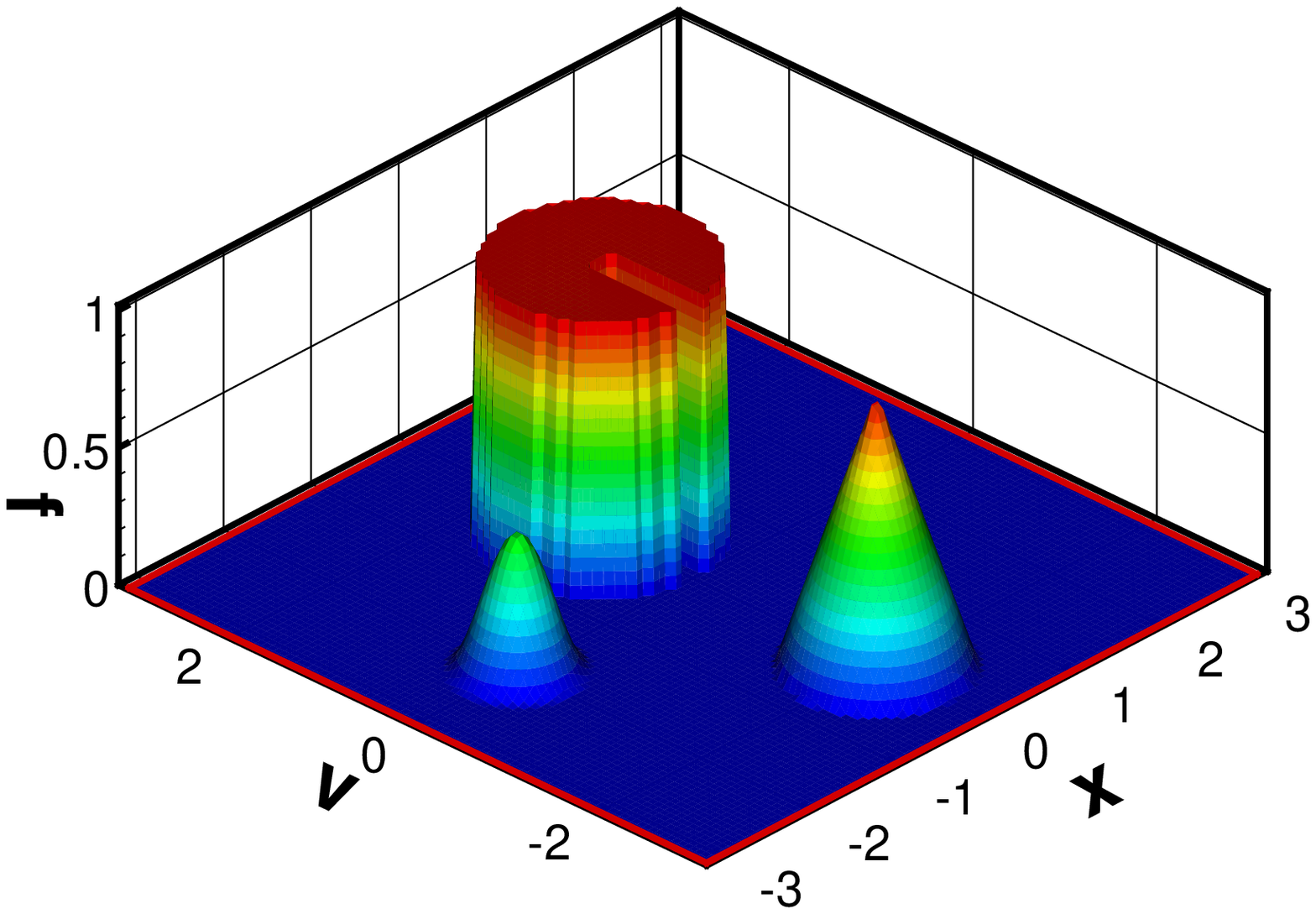},
\includegraphics[width=3in,clip]{./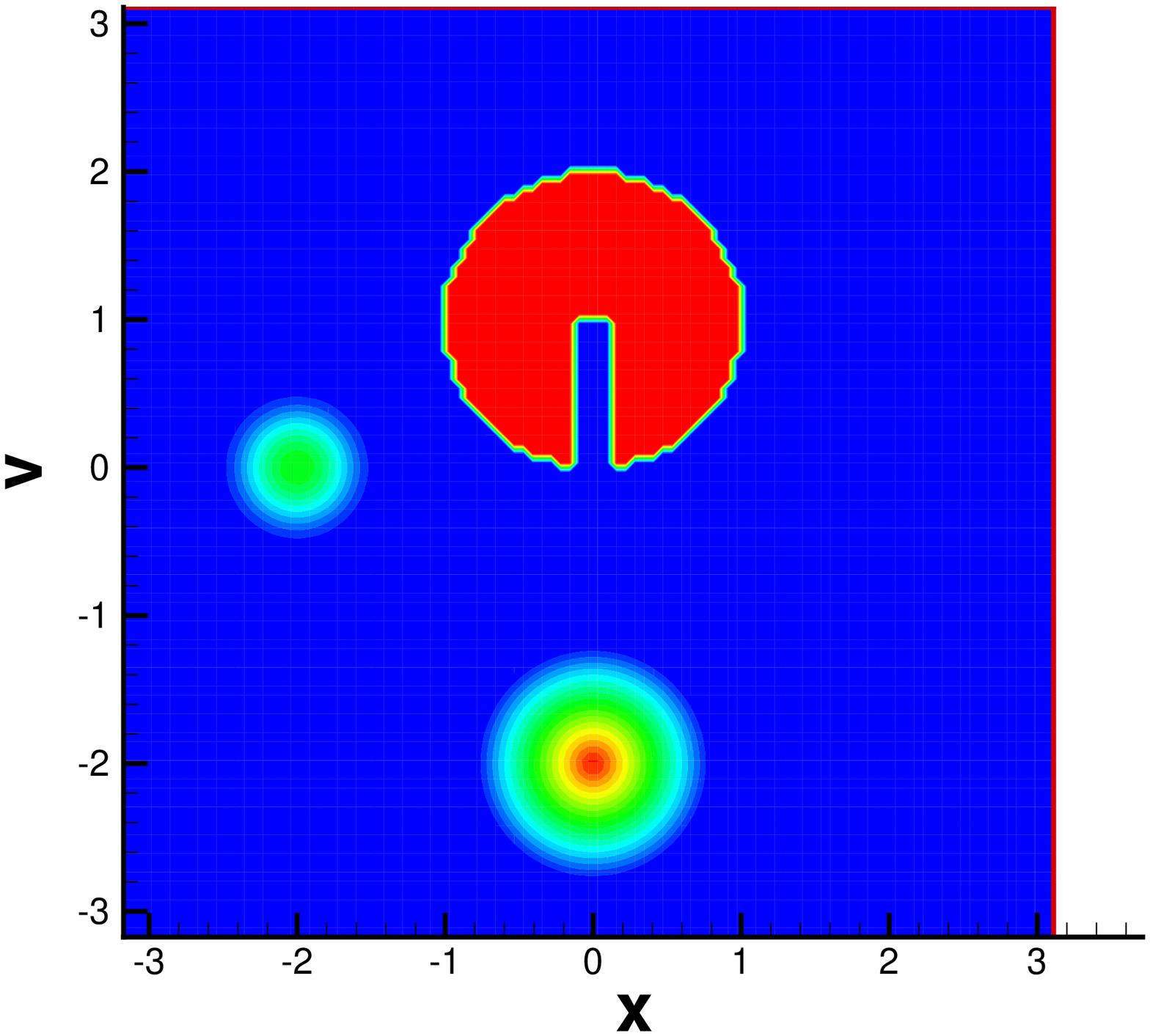}
\caption{The initial profile for equation (\ref{rigid}). $N_x \times N_v = 100 \times 100$.}
\label{fig01}
\end{figure}

\begin{figure}
\centering
\includegraphics[width=3in,clip]{./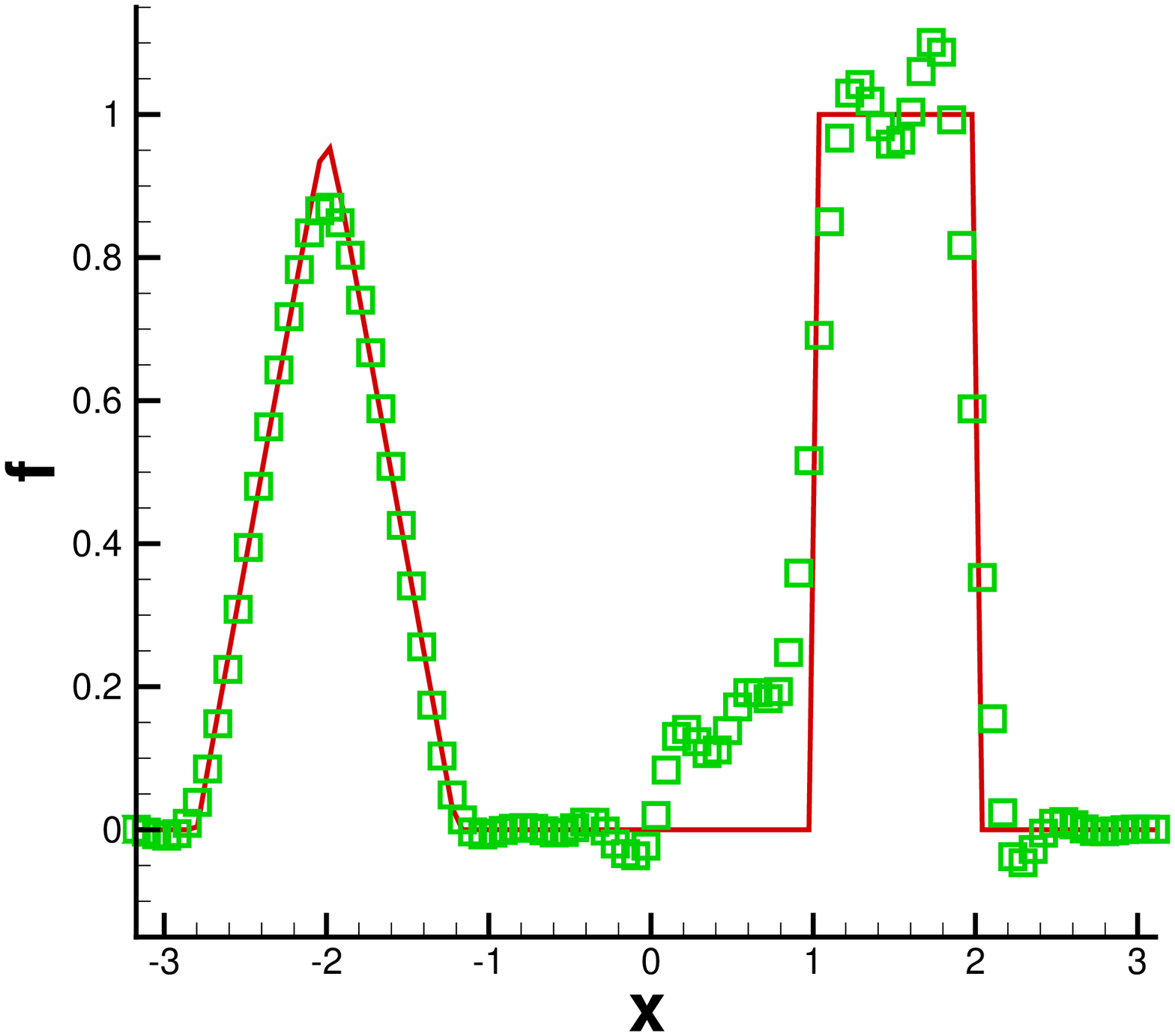},
\includegraphics[width=3in,clip]{./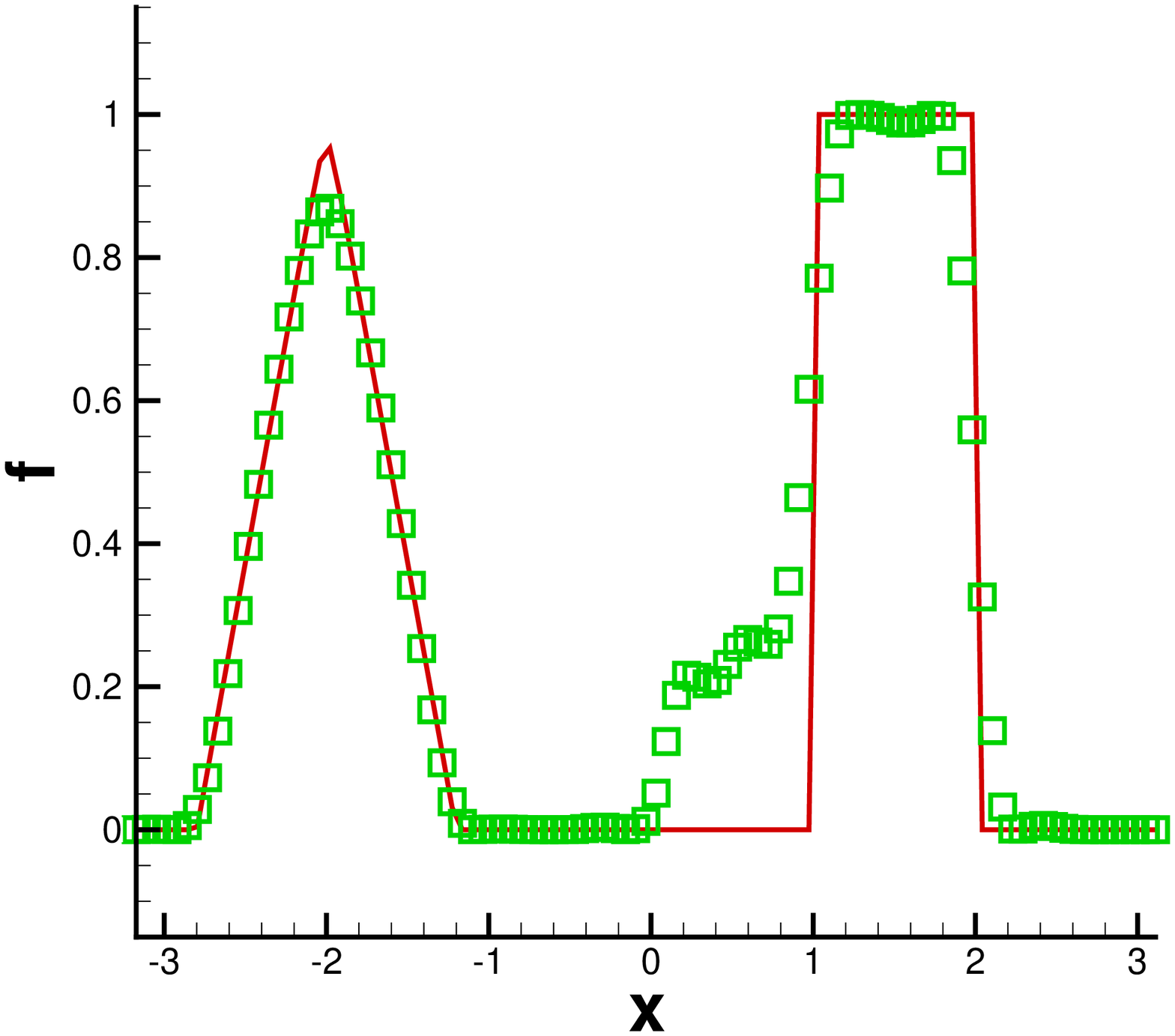}\\
\includegraphics[width=3in,clip]{./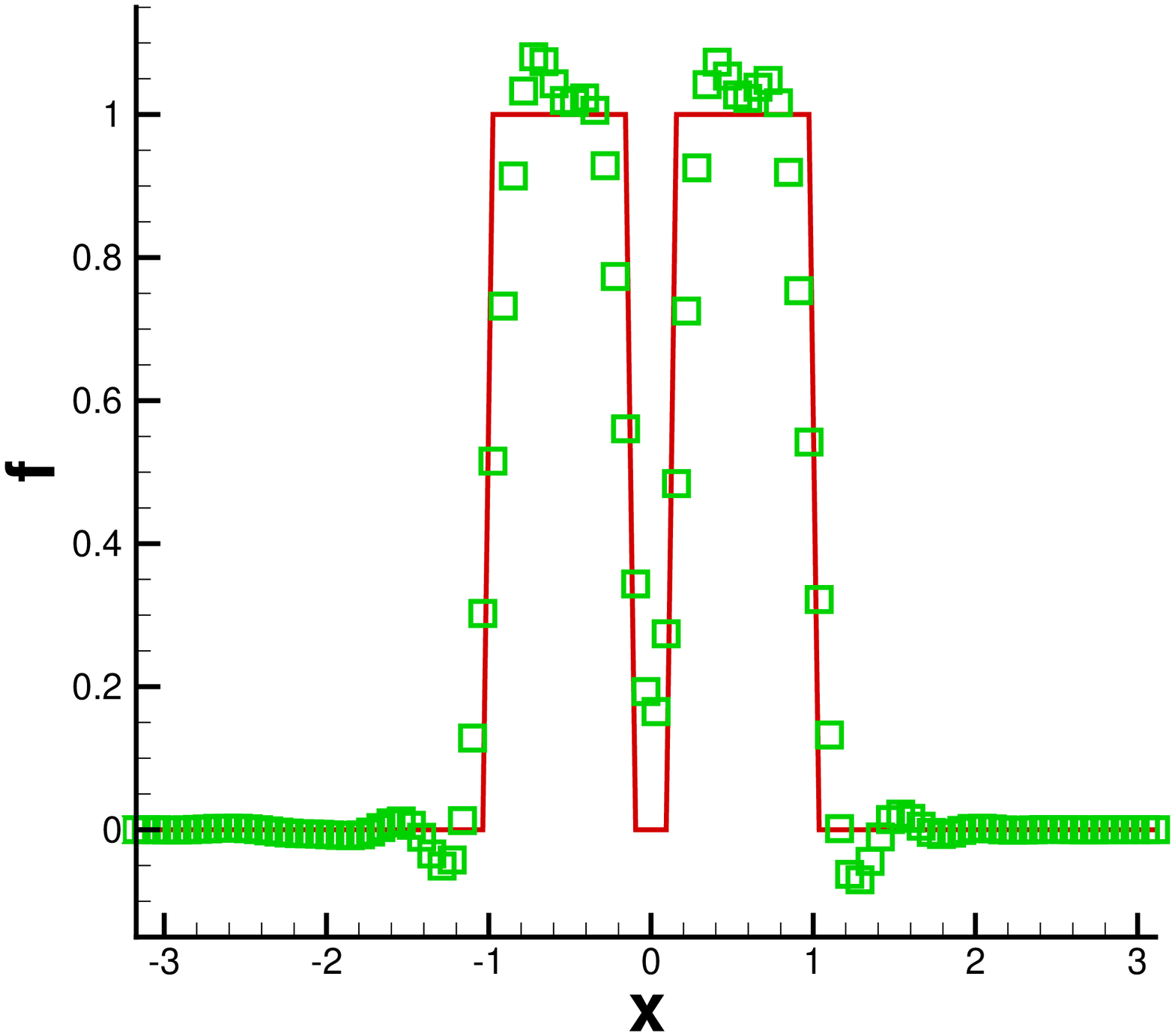},
\includegraphics[width=3in,clip]{./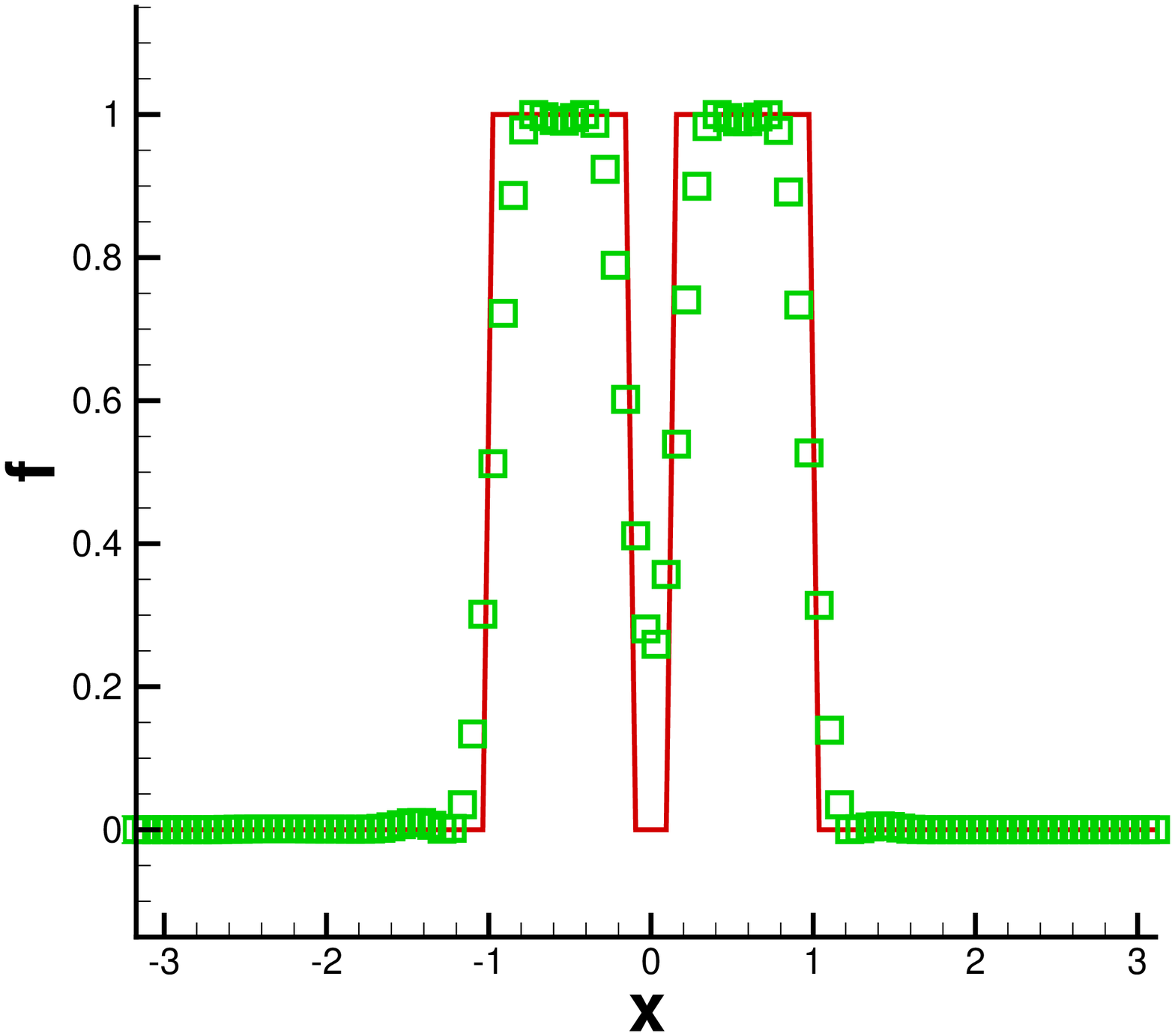}\\
\includegraphics[width=3in,clip]{./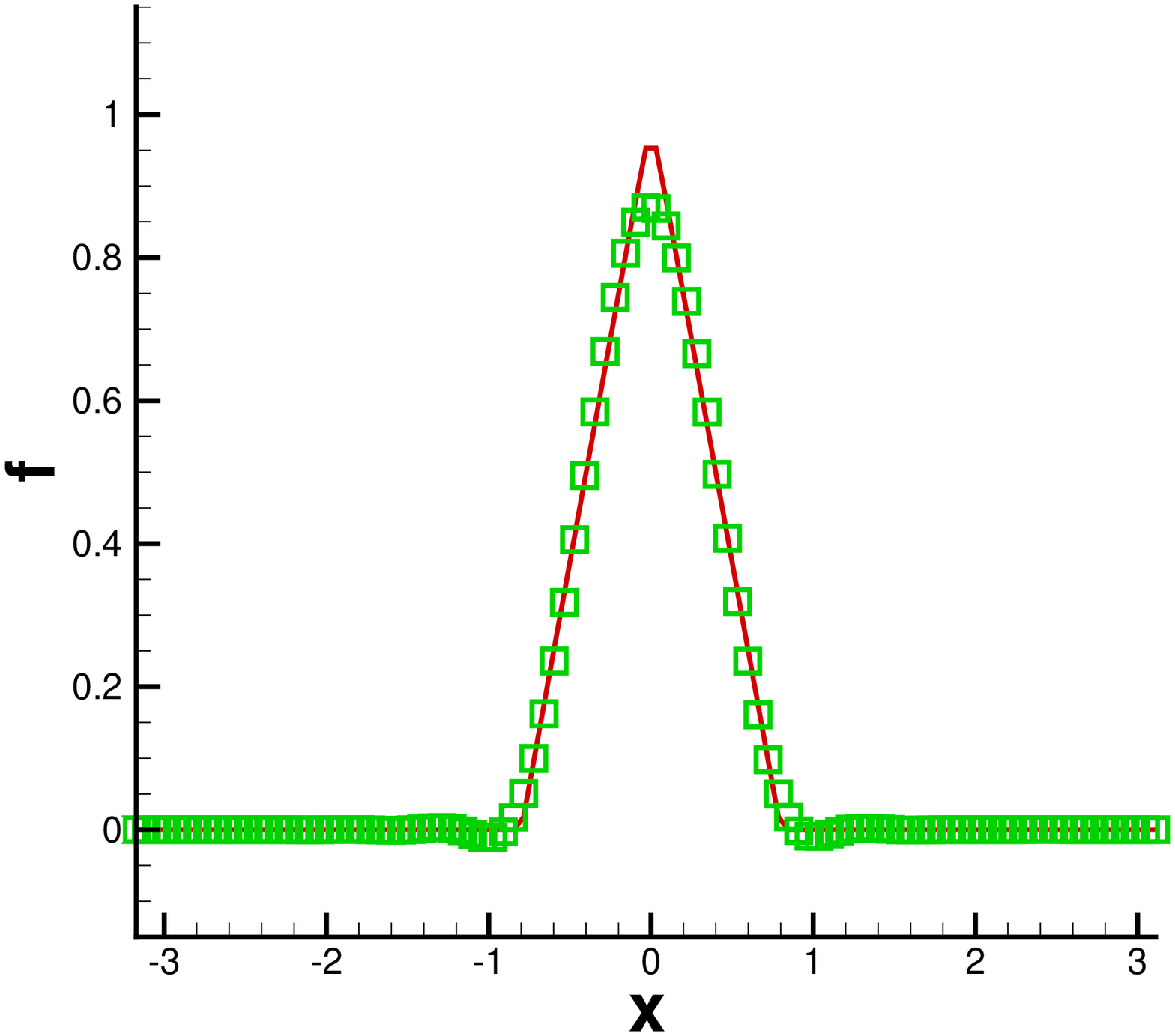},
\includegraphics[width=3in,clip]{./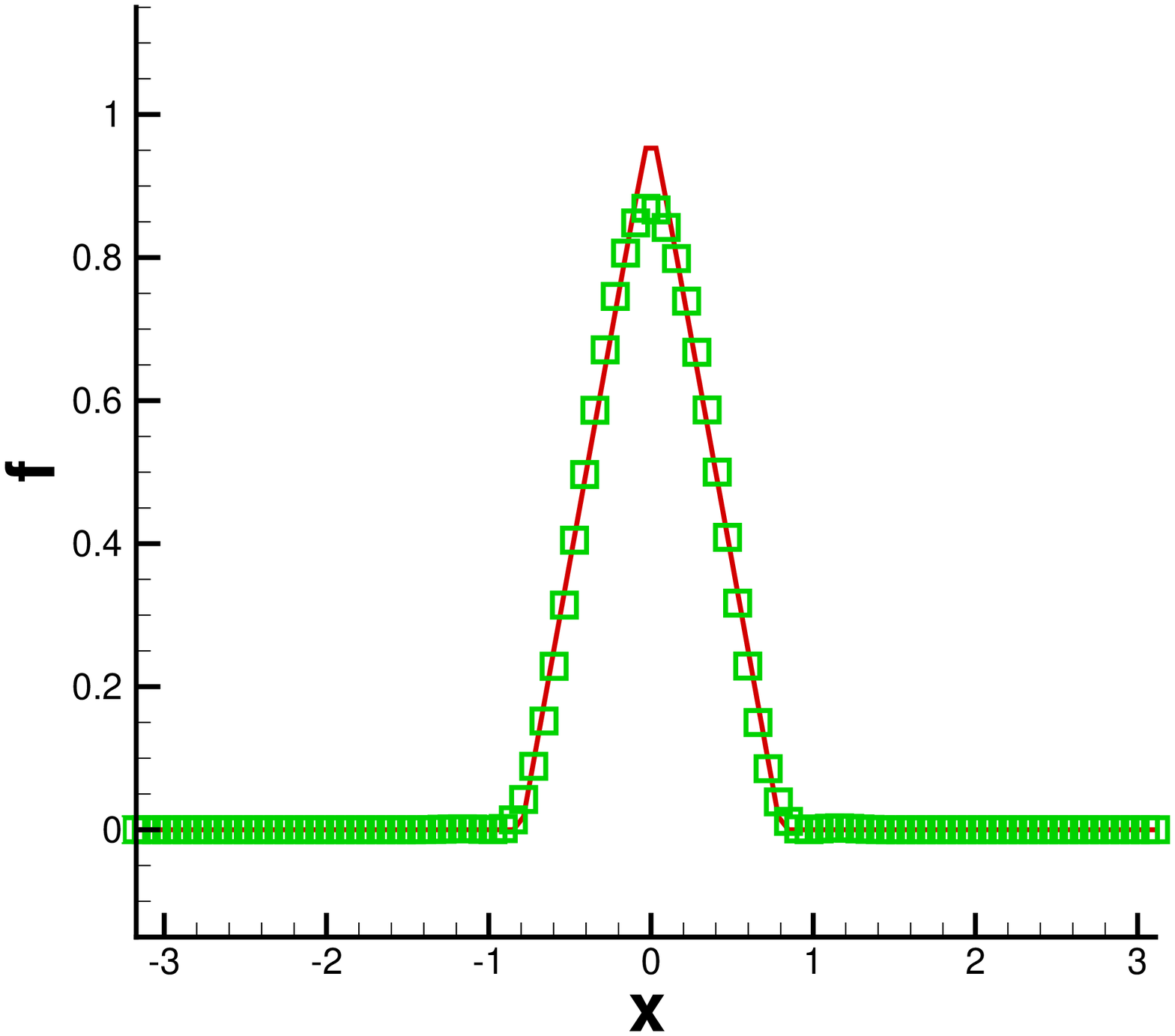}\\
\caption{Slides of numerical solutions for equation (\ref{rigid}) with initial condition in Fig. \ref{fig01} at $T=12\pi$. $ N_x \times N_v = 100 \times 100 $. Left: without limiter; Right: with limiter.
Cuts along $x=0$, $y=0.8$ and $y=-2$ from top to bottom, respectively. Solid line: exact solution; Symbols: numerical solution.}
\label{fig02}
\end{figure}

\end{exa}

Next, we consider solving the VP system (\ref{eq:vp1}) and (\ref{eq:vp2}). Some classical quantities for the probability distribution function are known to be conserved in time:
\begin{itemize}
\item $L^p$ norm, $1\le p \le \infty$:
\beq
\label{lp}
\|f\|_p=\left(\int_x\int_v|f(t,x,v)|^pdxdv\right)^{1/p},
\eeq

\item Energy:
\beq
\label{energy}
\text{Energy} = \int_v\int_x f(t,x,v)v^2 dx dv +\int_x E^2(t,x)dx,
\eeq

\item Entropy:
\beq
\label{entropy}
\text{Entropy} = \int_v \int_x f(t,x,v)log(f(x,v,t))dxdv.
\eeq
\end{itemize}
In our simulations, we measure the evolution of these quantities to demonstrate its
good performance.
% especially the $L^1$ norm and the entropy property. 
With the parametrized MPP flux limiter, the numerical solution of $f(t,x,v)$ 
would be nonnegative, and the numerical schemes are also conservative themselves, 
the $L^1$ norm is expected to be a constant up to the machine error. Schemes without the 
MPP flux limiters may produce negative $f(t,x,v)$ somewhere, and could not conserve the $L^1$ norm.  
%In order to compute the $L^1$ norm and the entropy, we need to take the absolute value 
%of $f(t,x,v)$ in (\ref{lp}) and (\ref{entropy}). In this case, the $L^1$ norm might not 
%be a constant. 
In the following computation, we take $V_C=2\pi$ unless otherwise specified, 
and the mesh is $N_x=80$ and $N_v=160$ with periodic boundary conditions on both
directions.

\begin{exa}
We first consider the VP system, with the following initial condition:
\beq
f(0,x,v)=\f{1}{\sqrt{2\pi}}\cos^4(k x)\exp(-\f{v^2}{2}),
\label{vpsmooth}
\eeq
with periodic boundary condition on both directions on the computational domain
$[0, L]\times[-V_c,V_c]$, where $k=0.5$ and $L=2\pi/k=4\pi$. This example is specifically designed to demonstrate that high order accuracy of the original SL scheme is preserved with the MPP flux limiter when solving the VP system. We take $V_c=20$ to minimize the error from the domain cutoff. In Table \ref{tab2}, the $L^1$ and $L^{\infty}$ errors and orders are shown for the SL WENO scheme with $CFL=0.8$ and $CFL=2.2$ at $T=0.01$. $5th$ order accuracy is observed for both $CFL$ conditions. The minimum numerical solutions $f_{min}$ 
are around machine error if with limiters, otherwise it could be negative.

\begin{table}
\centering
\caption{$L^1$ and $L^{\infty}$ errors and orders for the VP system (\ref{eq:vp1}) and (\ref{eq:vp2})
with initial condition (\ref{vpsmooth}) at $T=0.01$. ``WL" denotes the scheme with limiters, ``WO" denotes the scheme without limiters. ``$f_{min}$" is the minimum of the numerical solution of the SL WENO schemes. Mesh size $N_v=2N_x$.}
\vspace{0.2cm}
  \begin{tabular}{|c|c|c|c|c|c|c|c|}
    \hline
&   &$N_x$&  $L^1$ error &    order& $L^\infty$ error & order &  $f_{min}$ \\ \hline
\multirow{8}{*}{$CFL=0.8$}&\multirow{4}{*}{WL}      
    &  40 &     1.07E-06 &       --&     5.22E-05 &       --&     4.60E-73 \\ \cline{3-8}
&   &  80 &     4.05E-08 &     4.72&     1.89E-06 &     4.79&    -1.96E-84 \\ \cline{3-8}
&   & 160 &     1.15E-09 &     5.13&     6.55E-08 &     4.85&     1.84E-87 \\ \cline{3-8}
&   & 320 &     3.32E-11 &     5.12&     1.80E-09 &     5.18&     9.90E-88 \\ \cline{2-8}
    & \multirow{4}{*}{WO} 
    &  40 &     1.08E-06 &       --&     5.22E-05 &       --&    -3.19E-06 \\ \cline{3-8}
&   &  80 &     4.05E-08 &     4.74&     1.89E-06 &     4.79&    -1.89E-10 \\ \cline{3-8}
&   & 160 &     1.15E-09 &     5.13&     6.55E-08 &     4.85&    -1.21E-10 \\ \cline{3-8}
&   & 320 &     3.32E-11 &     5.12&     1.80E-09 &     5.18&    -1.95E-11 \\ \hline
\multirow{8}{*}{$CFL=2.2$}&\multirow{4}{*}{WL}
    &  40 &     1.07E-06 &       --&     5.22E-05 &       --&    -7.90E-79 \\ \cline{3-8}
&   &  80 &     4.04E-08 &     4.72&     1.89E-06 &     4.79&    -4.45E-86 \\ \cline{3-8}
&   & 160 &     1.15E-09 &     5.13&     6.46E-08 &     4.87&     1.85E-87 \\ \cline{3-8}
&   & 320 &     3.79E-11 &     4.93&     1.70E-09 &     5.25&     9.92E-88 \\ \cline{2-8}
    &\multirow{4}{*}{WO} 
    &  40 &     1.08E-06 &       --&     5.22E-05 &       --&    -3.17E-06 \\ \cline{3-8}
&   &  80 &     4.04E-08 &     4.74&     1.89E-06 &     4.79&    -1.76E-10 \\ \cline{3-8}
&   & 160 &     1.15E-09 &     5.13&     6.46E-08 &     4.87&    -1.18E-10 \\ \cline{3-8}
&   & 320 &     3.79E-11 &     4.93&     1.70E-09 &     5.25&    -1.92E-11 \\ \hline
  \end{tabular}
\label{tab2}
\end{table}

\end{exa}

\begin{exa} (Landau damping)
We consider the Landau damping for the VP system
with the initial condition:
\beq
\label{landau}
f(0,x,v)=\f{1}{\sqrt{2\pi}}(1+\alpha \cos(k x))\exp(-\f{v^2}{2}),
\eeq
The length of the domain in the $x$-direction is $L=\f{2\pi}{k}$. 
For the strong Landau damping with $\alpha=0.5$ and $k=0.5$, we plot the time evolution
of electric field in $L^2$ norm and $L^\infty$ norm in Figure \ref{fig21}, with the linear decay rate $\gamma_1=-0.2812$ and $\gamma_2=0.0770$ \cite{guo2013hybrid, cheng1976integration} plotted in the same plots.
Time evolution of the relative deviations of discrete $L^1$ norm, $L^2$ norm, kinetic energy and
entropy are reported in Figures \ref{fig22}. No much difference is observed between the performance of schemes with RK and SL time discretization.
For this case, with the MPP flux limiter, the $L^1$ norm is preserved up to the machine error, however it is not for schemes without the limiter. For the case of weak Landau damping with $\alpha=0.01$ and $k=0.5$,
little difference can be seen between with and without limiters, we omit them here due to similarity.

\begin{figure}
\centering
\includegraphics[width=3in,clip]{./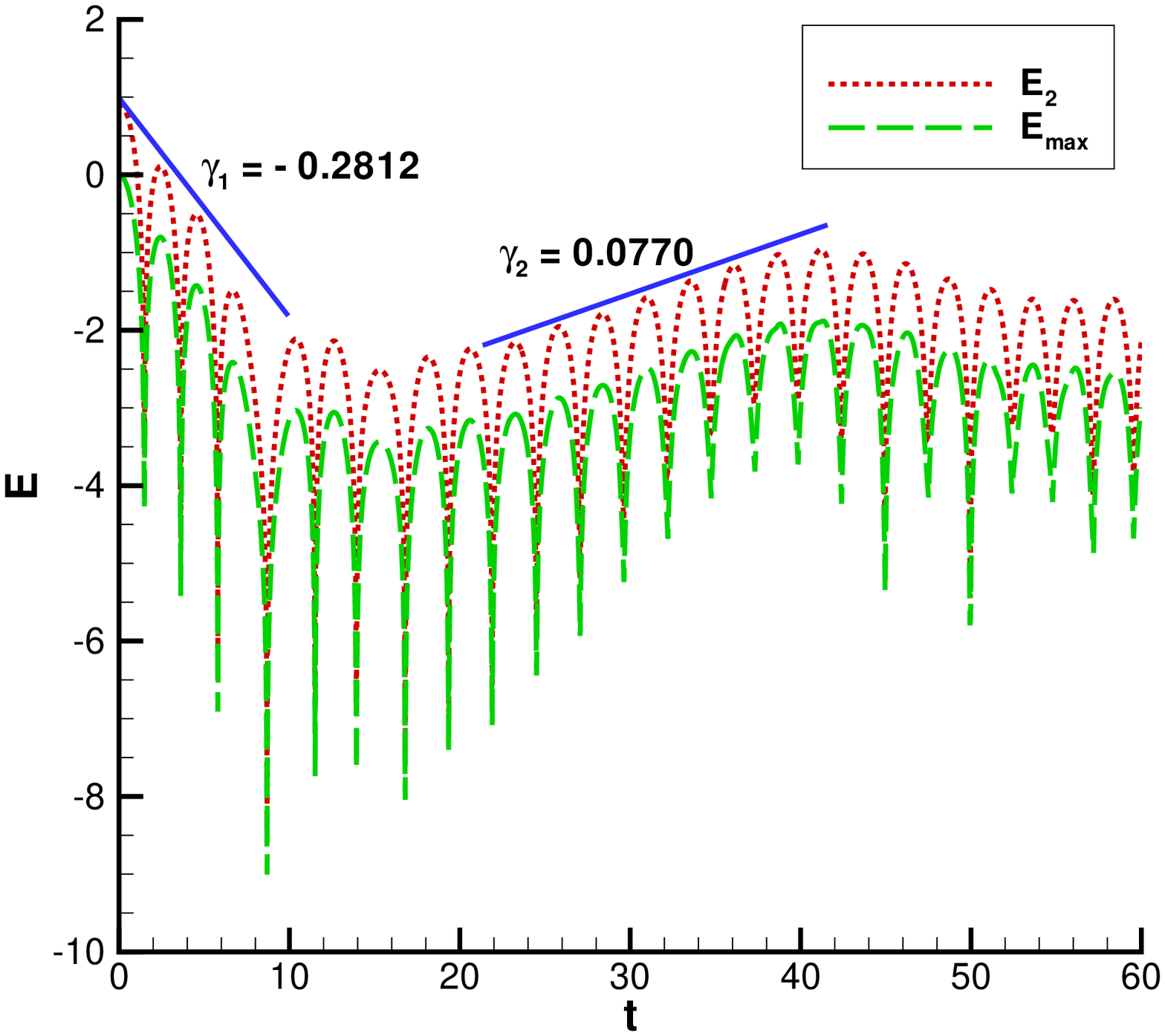},
\includegraphics[width=3in,clip]{./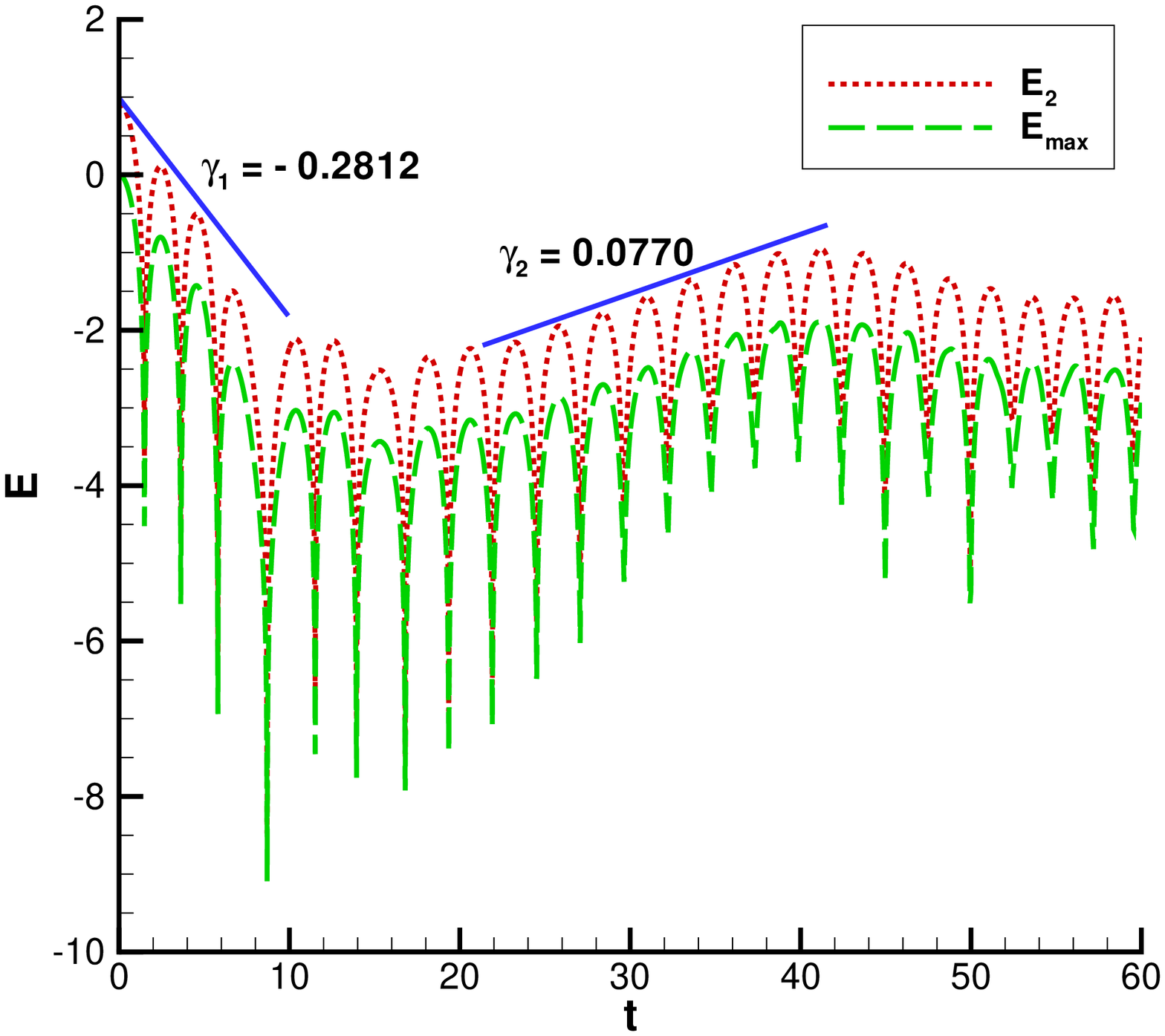}\\
\caption{Strong Landau damping with the initial condition (\ref{landau}). 
Time evolution of the electric field in $L^2$ norm ($E_2$) and $L^\infty$ norm ($E_{max}$) (logarithmic value).
Mesh: $N_x \times N_v = 80 \times 160 $. Left: SL-WL; Right: RK-WL.}
\label{fig21}
\end{figure}

\begin{figure}
\centering
\includegraphics[width=3in,clip]{./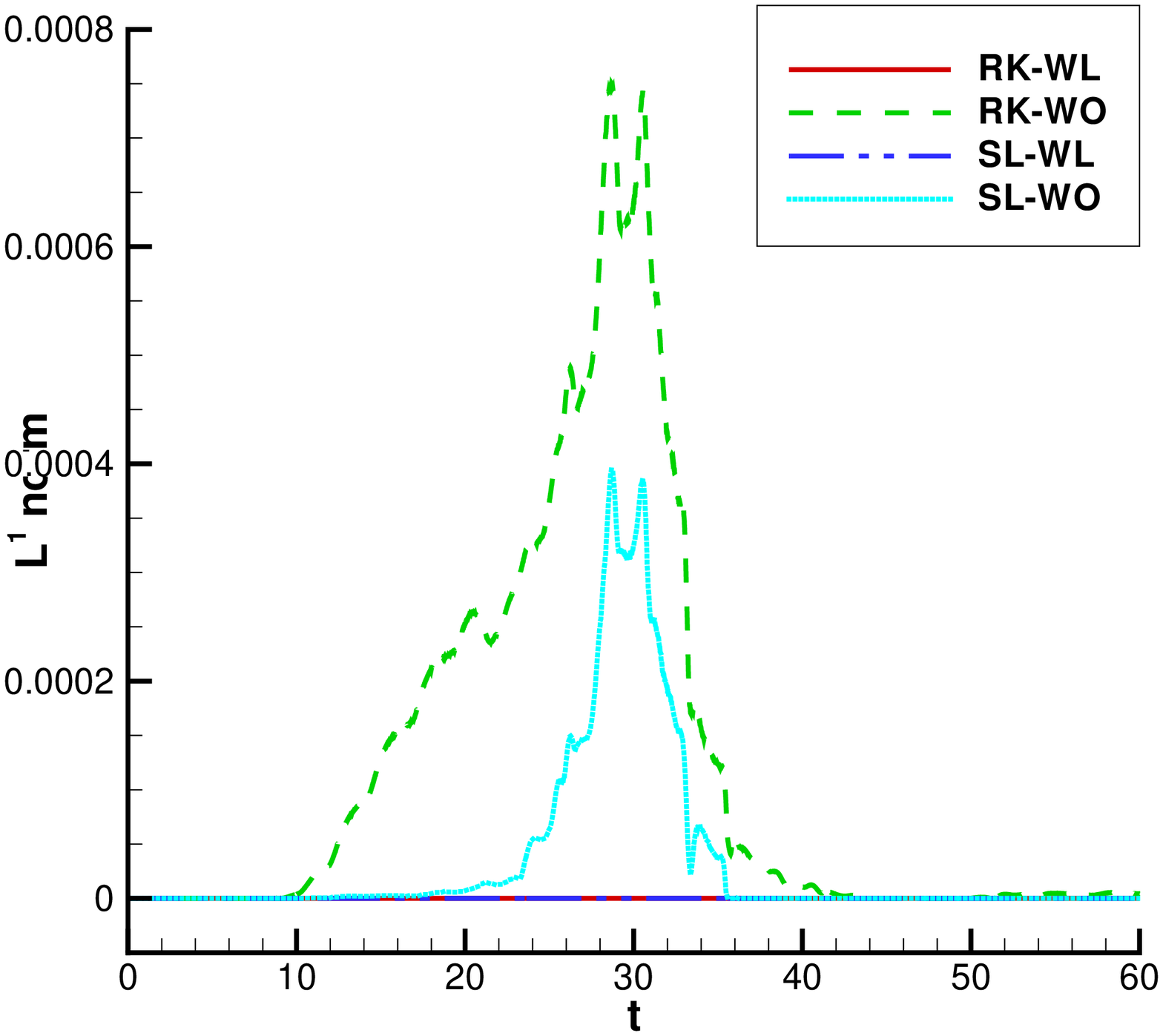},
\includegraphics[width=3in,clip]{./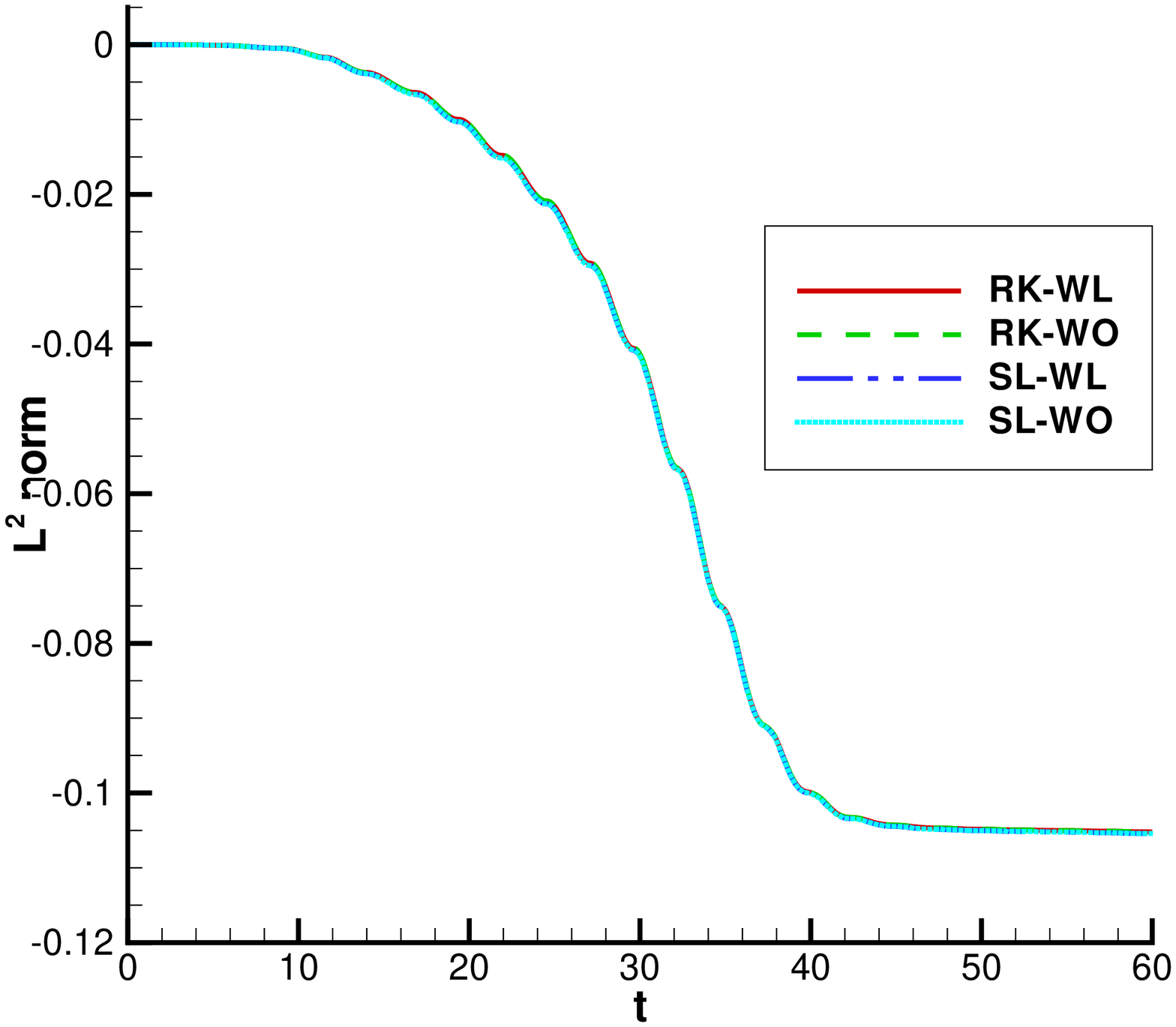}\\
\includegraphics[width=3in,clip]{./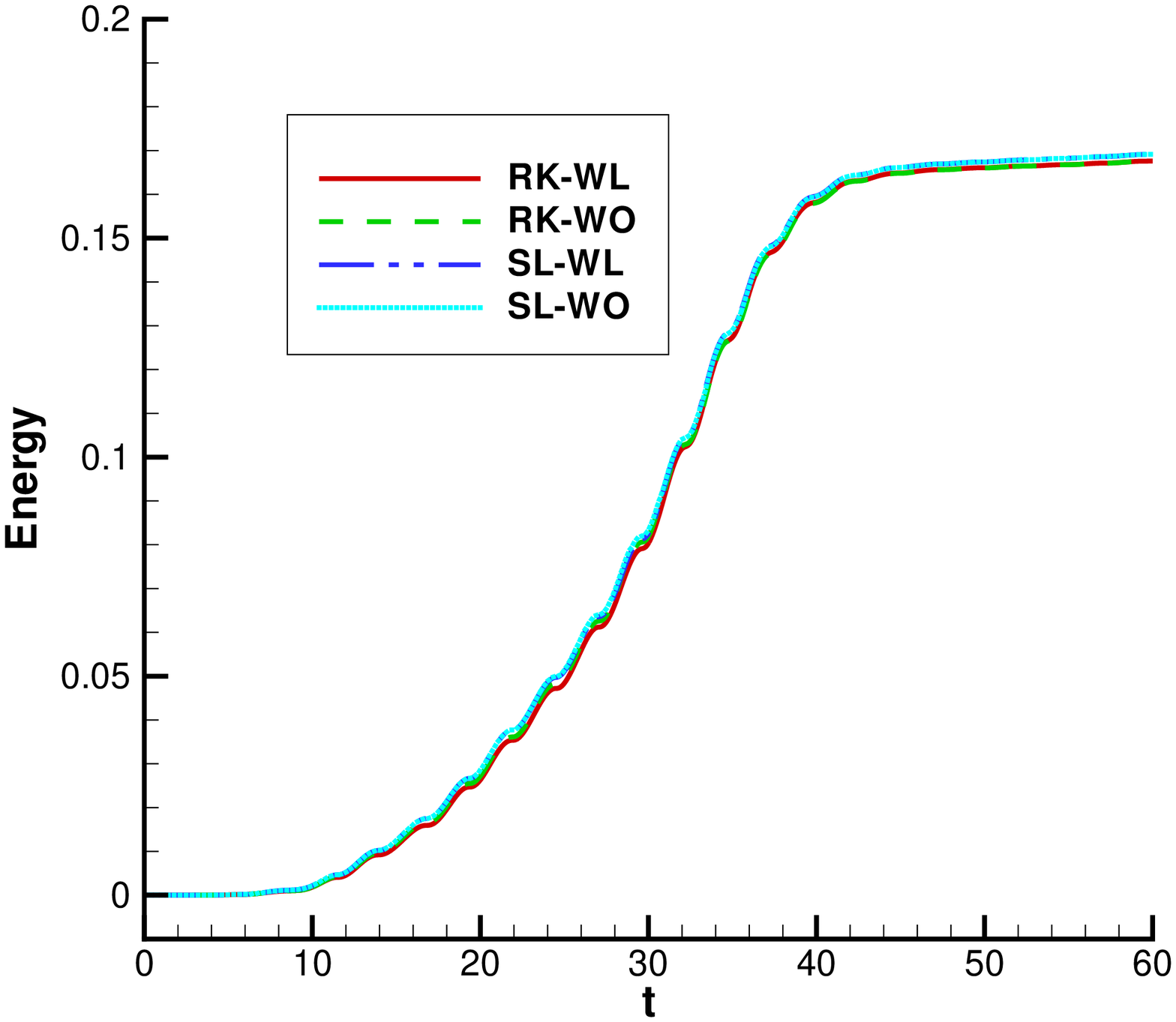},
\includegraphics[width=3in,clip]{./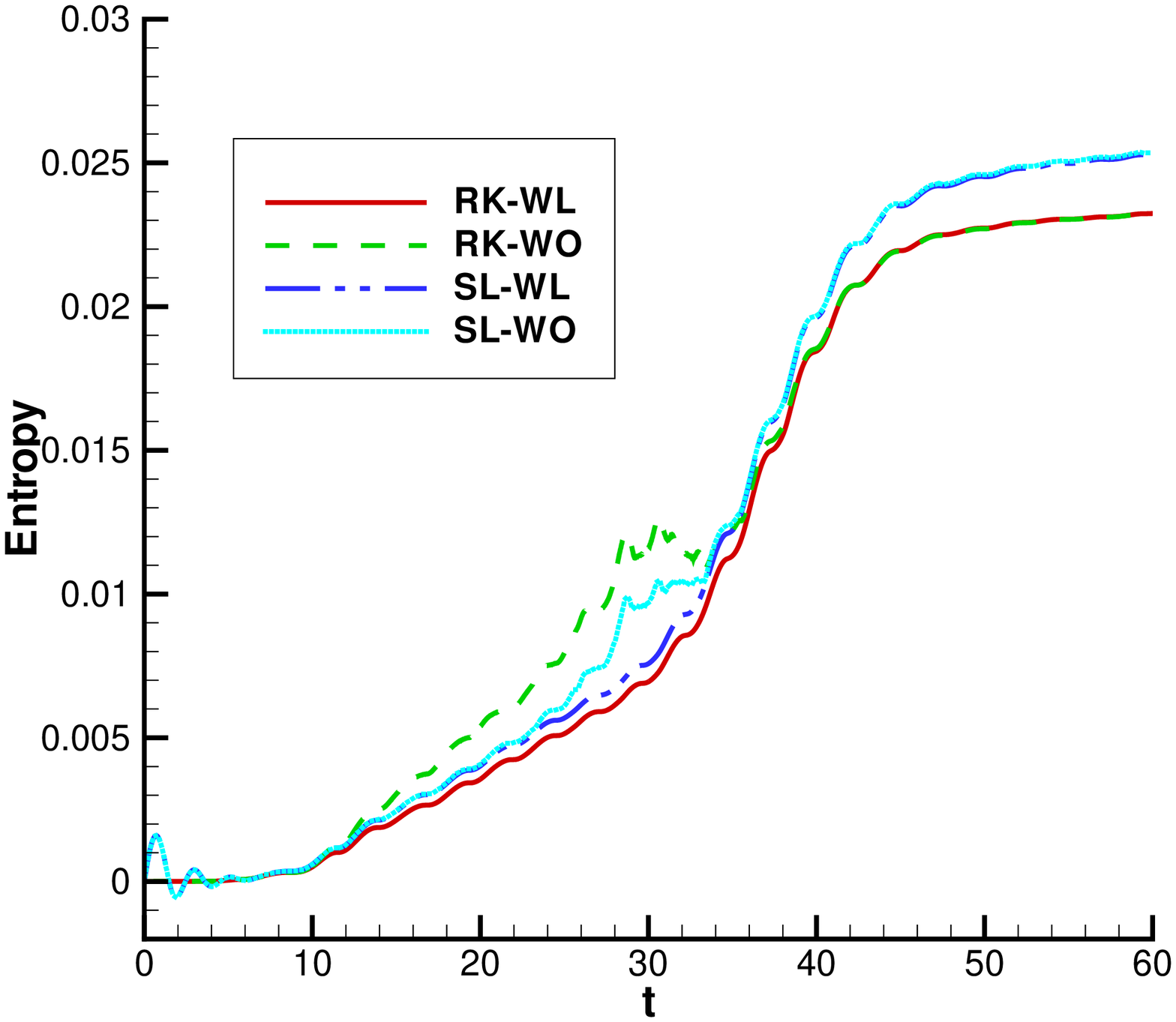}\\
\caption{Strong Landau damping with the initial condition (\ref{landau}). Time evolution of the relative
deviations of discrete $L^1$ norm (upper left), $L^2$ norm (upper right), kinetic
energy (lower left) and entropy (lower right). $N_x \times N_v = 80 \times 160 $.}
\label{fig22}
\end{figure}

\end{exa}

\begin{exa} (Two stream instability)
We consider the symmetric two stream instability problem with the initial condition
\beq
\label{s2stream}
f(0,x,v)=\f{1}{2v_{th}\sqrt{2\pi}}\left[\exp\left(-\f{(v-u)^2}{2v_{th}^2}\right)
+\exp\left(-\f{(v+u)^2}{2v_{th}^2}\right)\right](1+\alpha\cos(kx)),
\eeq
where $\alpha=0.05$, $u=0.99$, $v_{th}=0.3$ and $k=\f{2}{13}$.
The time evolution
of the electric field in $L^2$ norm and $L^\infty$ norm is plotted in Figure \ref{fig41}.
The time evolution of the relative deviations of discrete $L^1$ norm, $L^2$ norm, kinetic energy and
entropy are reported in Figures \ref{fig42}. The MPP flux limiters play a very
good role on the $L^1$ norm, with very less effect on other discrete properties.

Similarly, the two stream instability problem with an unstable initial distribution
function
\beq
\label{2stream}
f(0,x,v)=\f{2}{7\sqrt{2\pi}}(1+5v^2)(1+\alpha((\cos(2kx)+\cos(3kx))/1.2+\cos(kx))\exp(-\f{v^2}{2}),
\eeq
where $\alpha=0.01$ and $k=0.5$, has very similar results with and without limiters. We 
omit them to save some space. 

\begin{figure}
\centering
\includegraphics[width=3in,clip]{./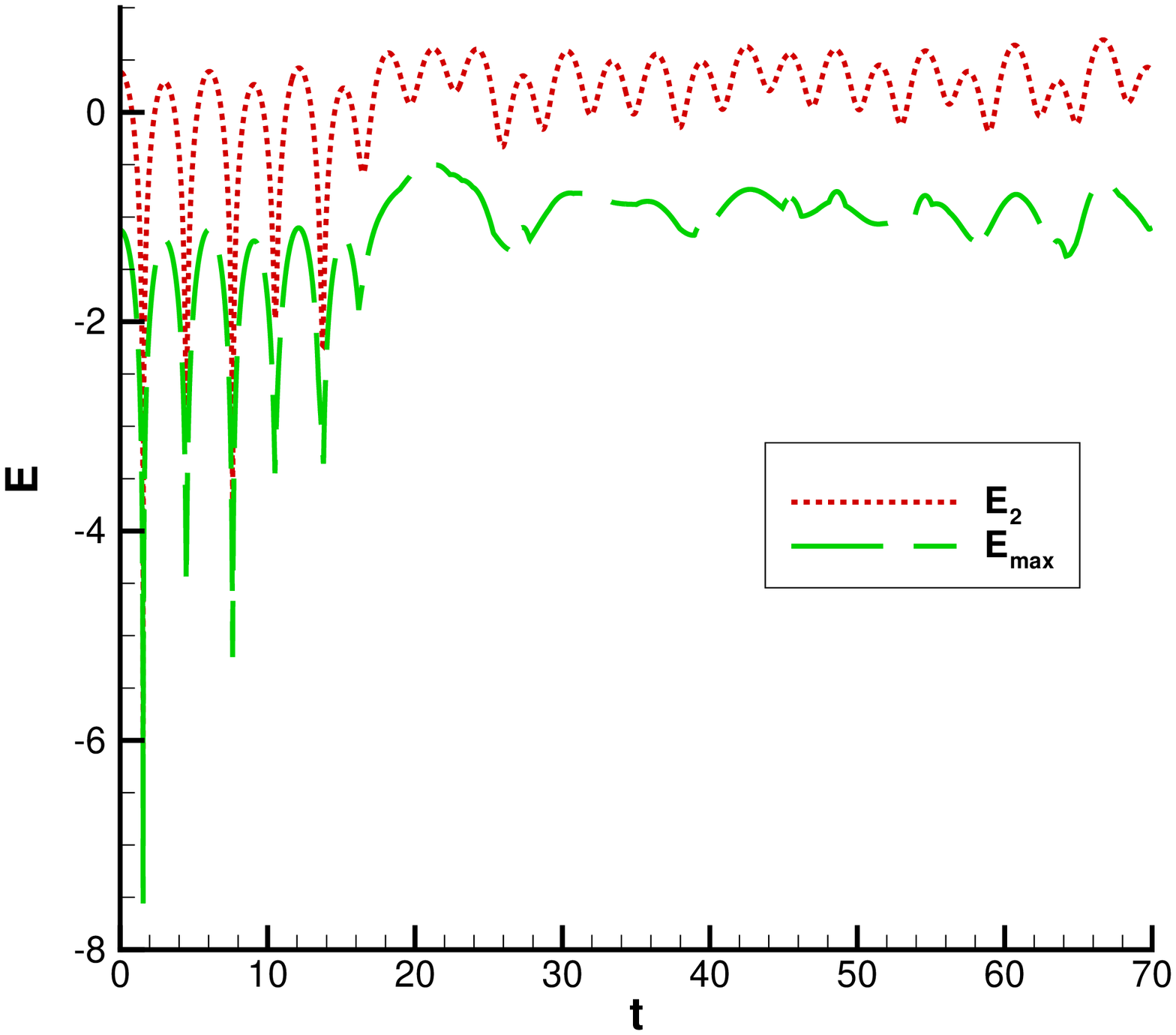},
\includegraphics[width=3in,clip]{./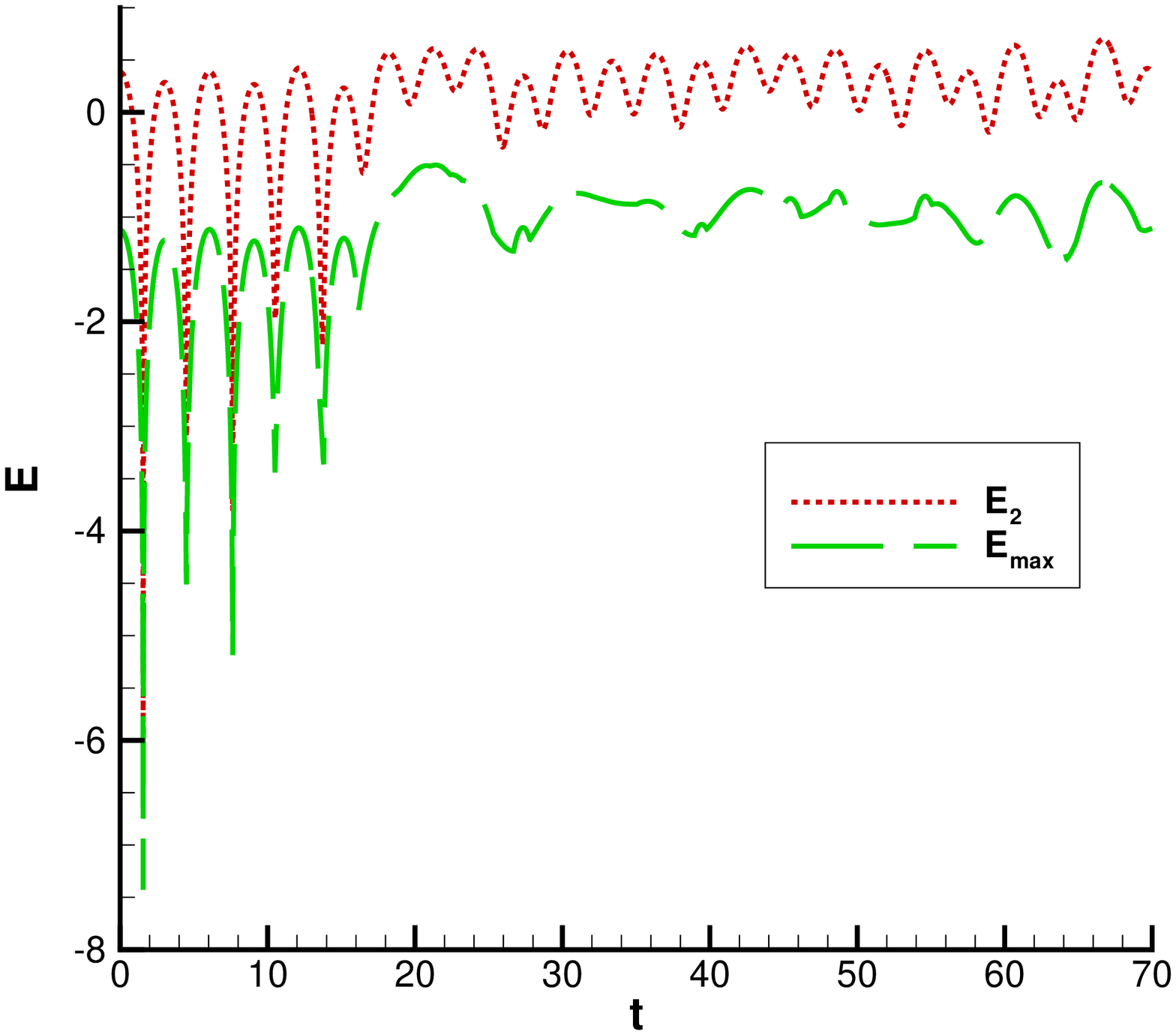}\\
\caption{Symmetric two stream instability with the initial condition (\ref{s2stream}). 
Time evolution of the electric field in $L^2$ norm ($E_2$) and $L^\infty$ norm ($E_{max}$) (logarithmic value).
$N_x \times N_v = 80 \times 160 $. Left: SL-WL; Right: RK-WL.}
\label{fig41}
\end{figure}

\begin{figure}
\centering
\includegraphics[width=3in,clip]{./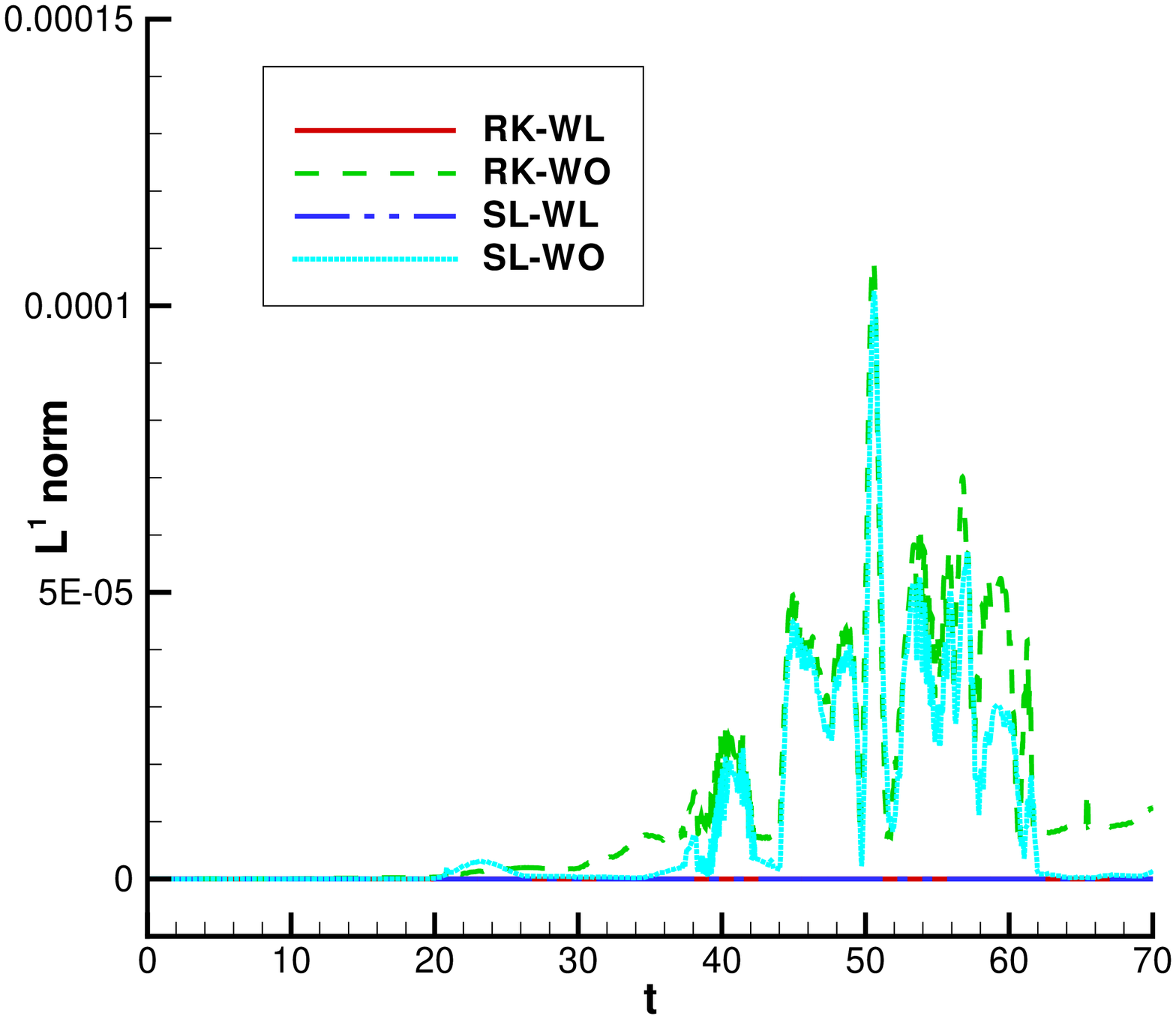},
\includegraphics[width=3in,clip]{./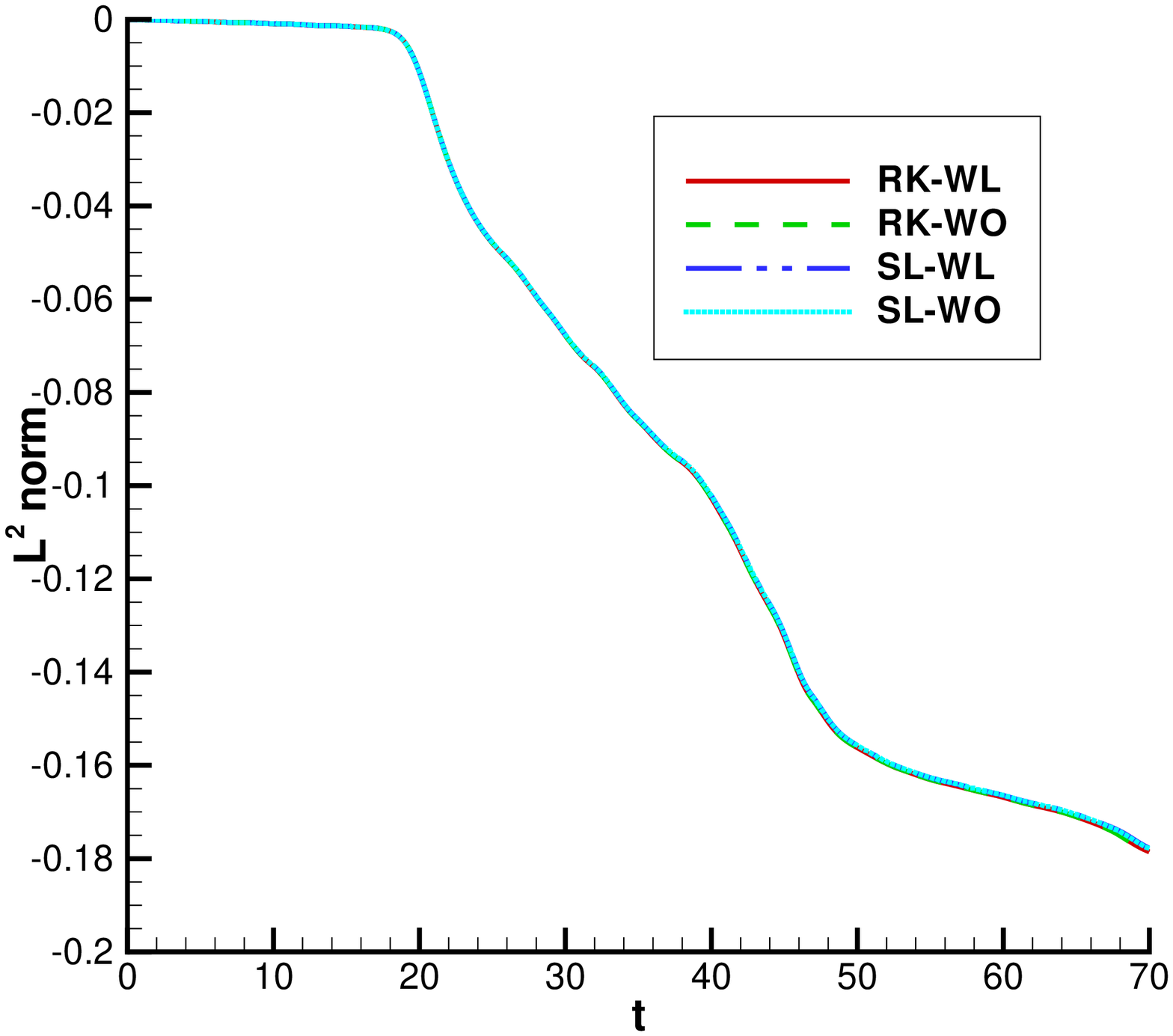}\\
\includegraphics[width=3in,clip]{./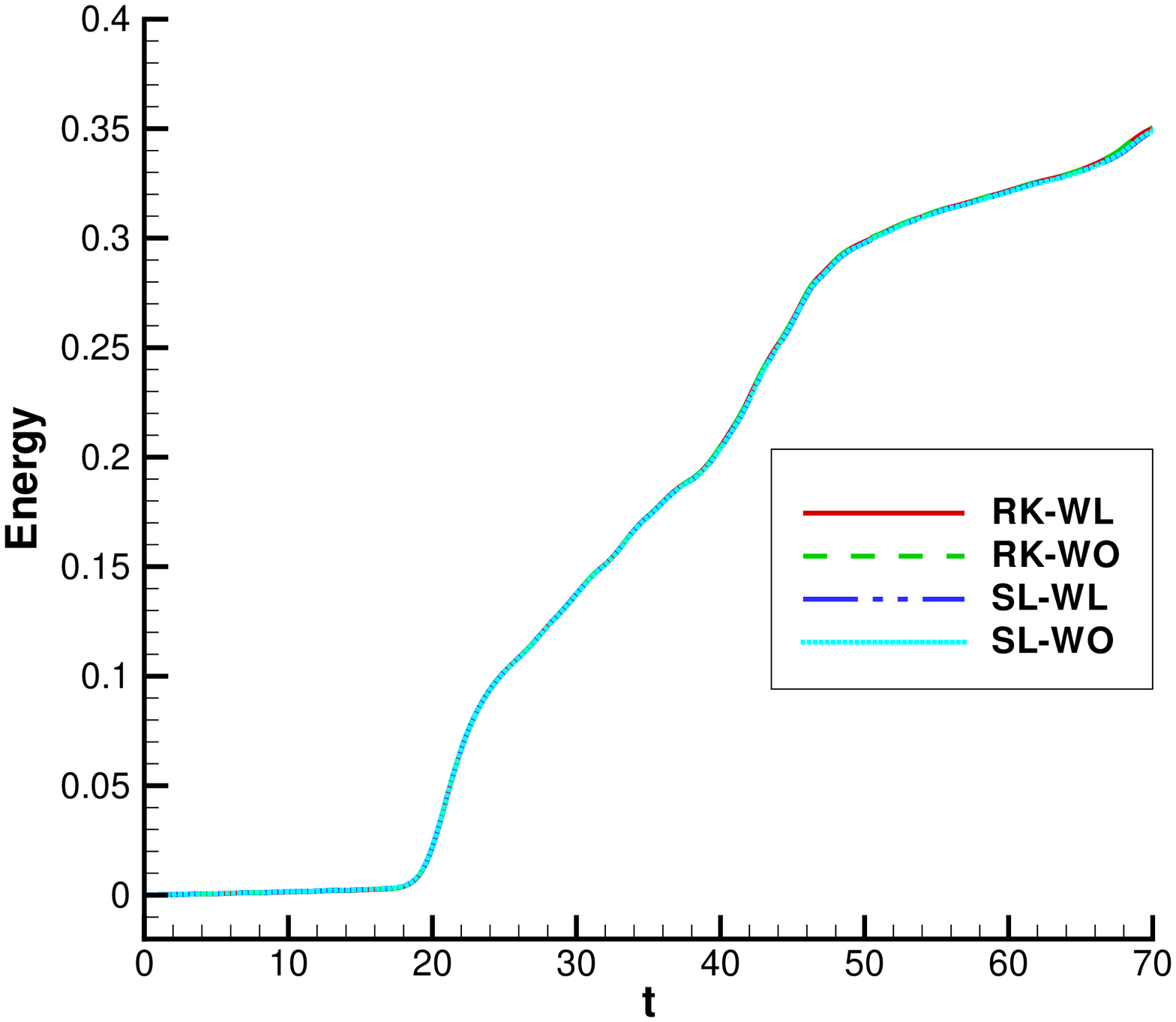},
\includegraphics[width=3in,clip]{./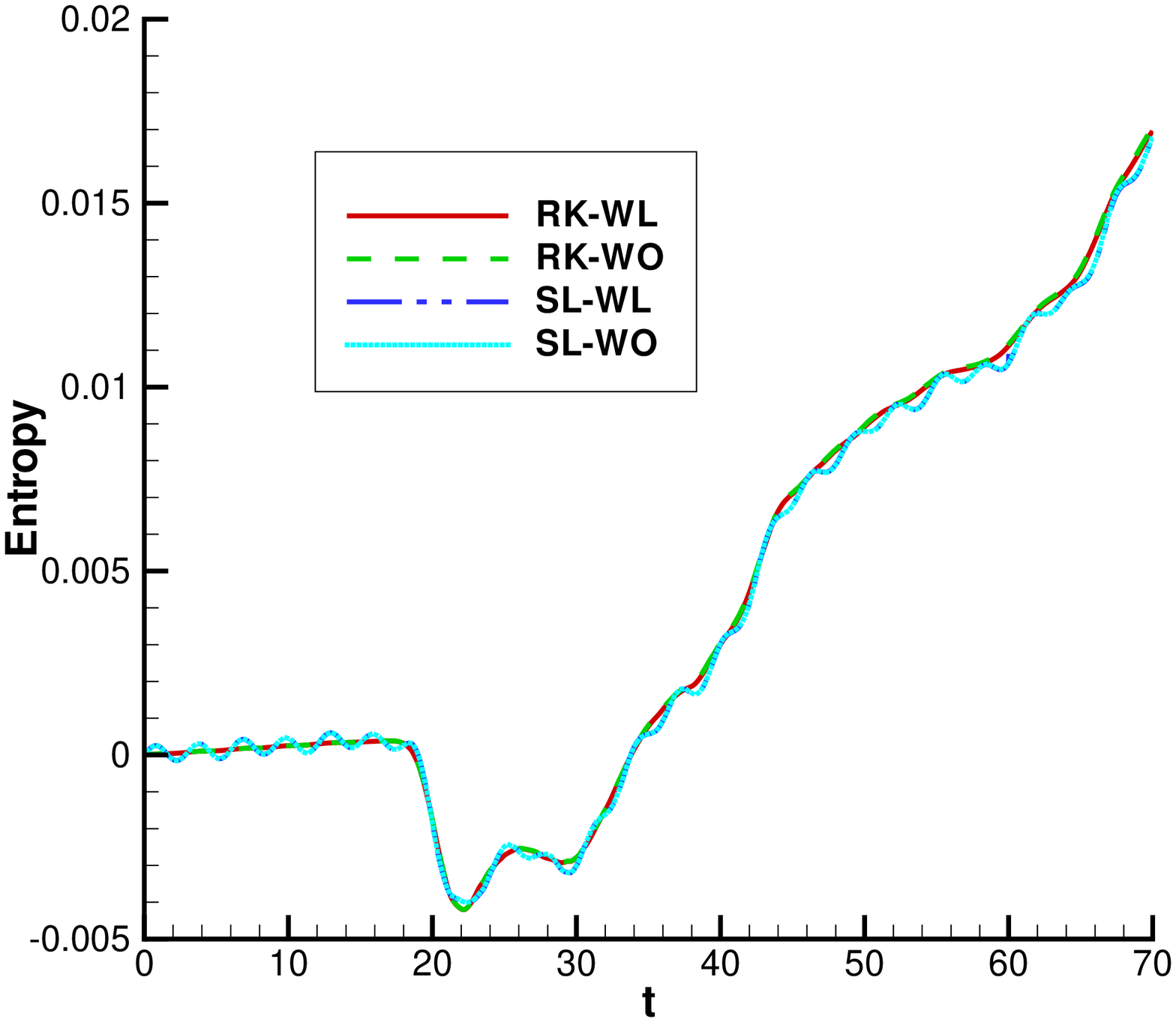}\\
\caption{Symmetric two stream instability with the initial condition (\ref{s2stream}). 
Time evolution of the relative
deviations of discrete $L^1$ norm (upper left), $L^2$ norm (upper right), kinetic
energy (lower left) and entropy (lower right). $N_x \times N_v = 80 \times 160 $. }
\label{fig42}
\end{figure}

\end{exa}

\begin{exa}(Bump-on-tail instability)
We consider an unstable bump-on-tail problem with the initial distribution as
\begin{align}
\label{bump}
f(0,x,v)=f_{b.o.t}(v)(1+\alpha\cos(kx)),
\end{align} 
where the bump-on-tail distribution is
\begin{align}
f_{b.o.t}(v)=\frac{n_p}{\sqrt{2\pi}}\exp(-\frac{v^2}{2})+\frac{n_b}{\sqrt{2\pi}}\exp(-\frac{(v-v_b)^2}{2v_t^2}).
\end{align}
The parameters are chosen to be $n_p=0.9$, $n_b=0.2$, $v_b=4.5$, $v_t=0.5$, $\alpha=0.04$, $k=0.3$.
The computational domain is $[0,\frac{2\pi}{k}]\times[-8, 8]$. We first show the time evolution of the $L^\infty$ norm for the electric field $E$ in Fig. \ref{ie11}. In Fig. \ref{ie12}, we display the relative deviations of discrete $L^1$ norm, $L^2$ norm, kinetic energy and
entropy for the distribution function $f$ with and without limiters. Clearly we can observe that without limiters the $L^1$ norm is around $10^{-5}$, which indicates negative numerical values of $f$, while with limiters the $L^1$ norm is around machine error. The contour of the phase space for $f$ with and without limiters are also presented in Fig. \ref{ie13}. The results match those in \cite{arber2002critical} and slight difference can be seen between with and without limiters.

\begin{figure}
\centering
\includegraphics[width=2in,angle=270,clip]{./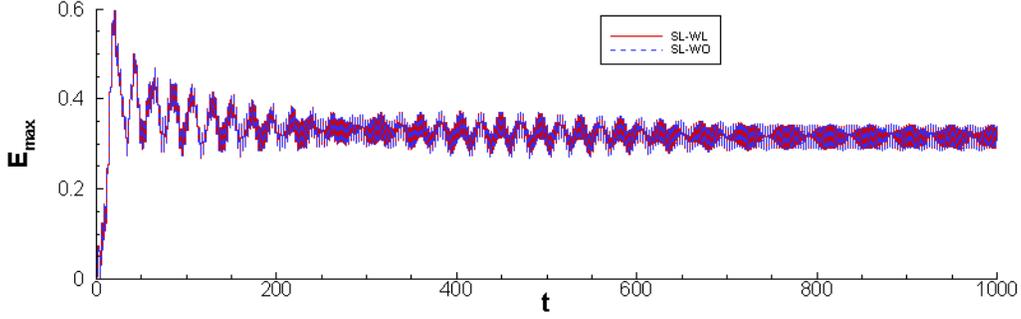}
\caption{Bump-on-tail problem with initial condition \eqref{bump}. 
Time evolution of the electric field 
in the $L^\infty$ norm $E_{max}$.
Mesh: $N_x\times N_v = 256 \times 256$. }
\label{ie11}
\end{figure}

\begin{figure}
\centering
\includegraphics[width=2.8in,angle=270,clip]{./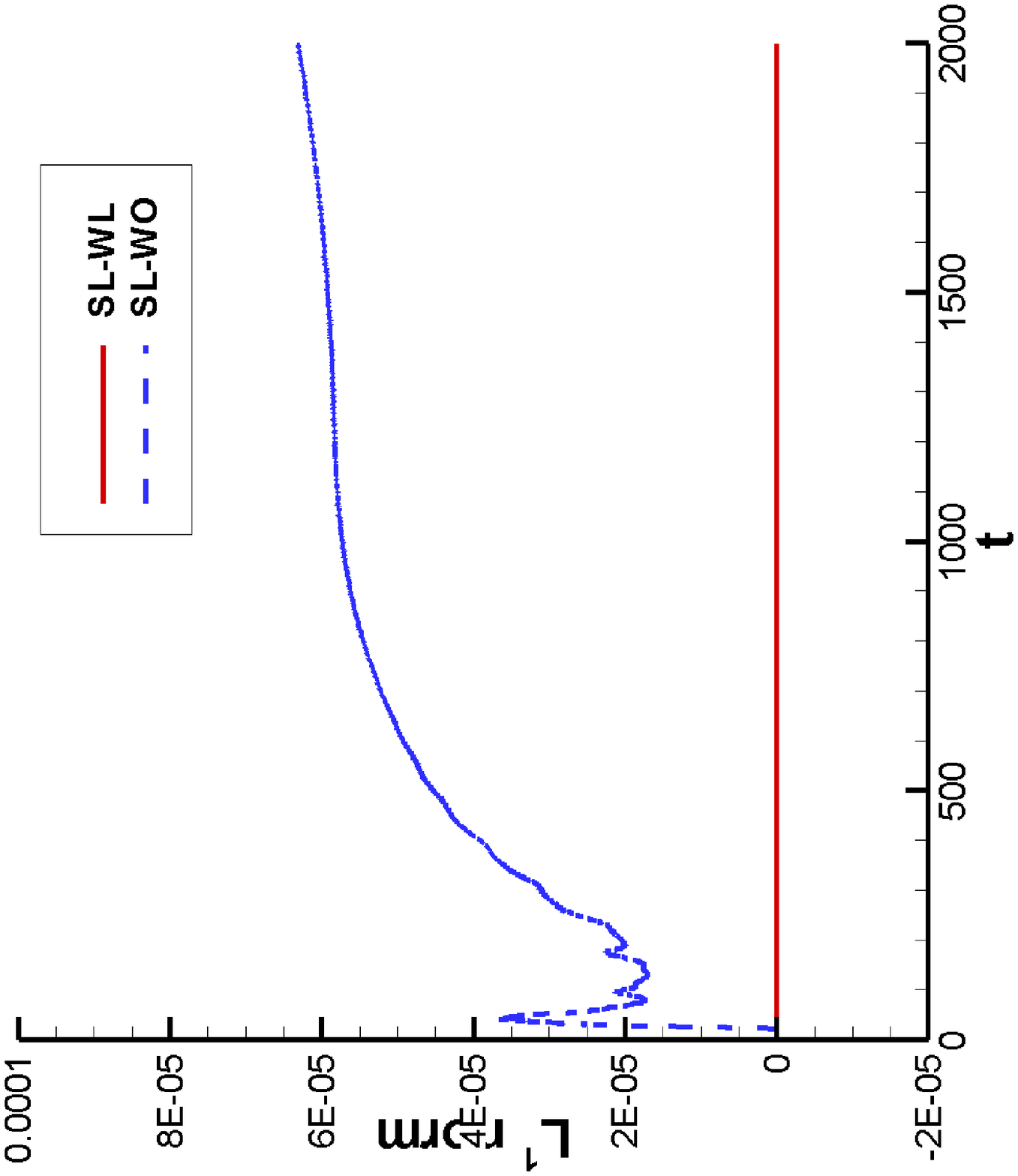}
\includegraphics[width=2.8in,angle=270,clip]{./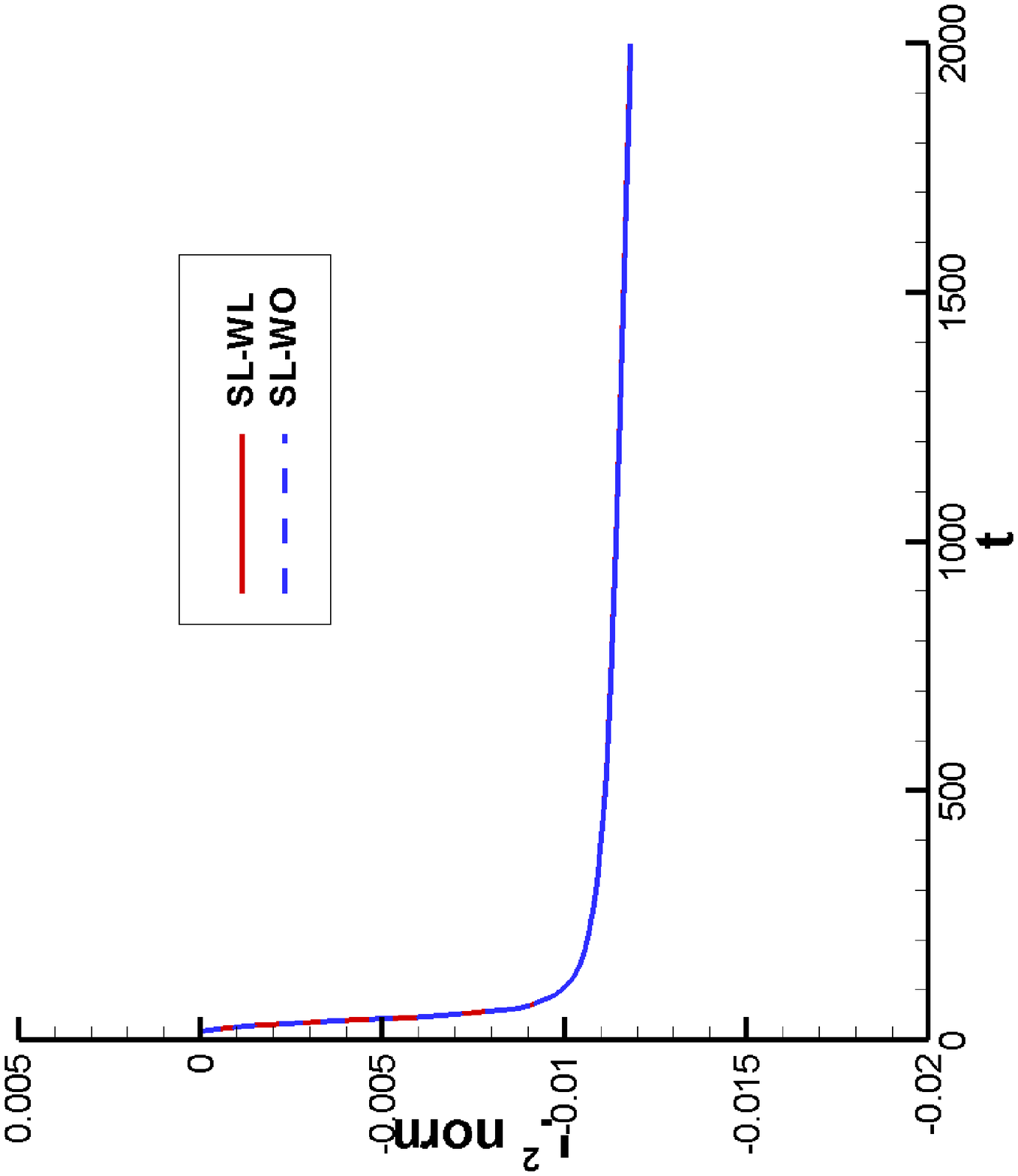}\\
\includegraphics[width=2.8in,angle=270,clip]{./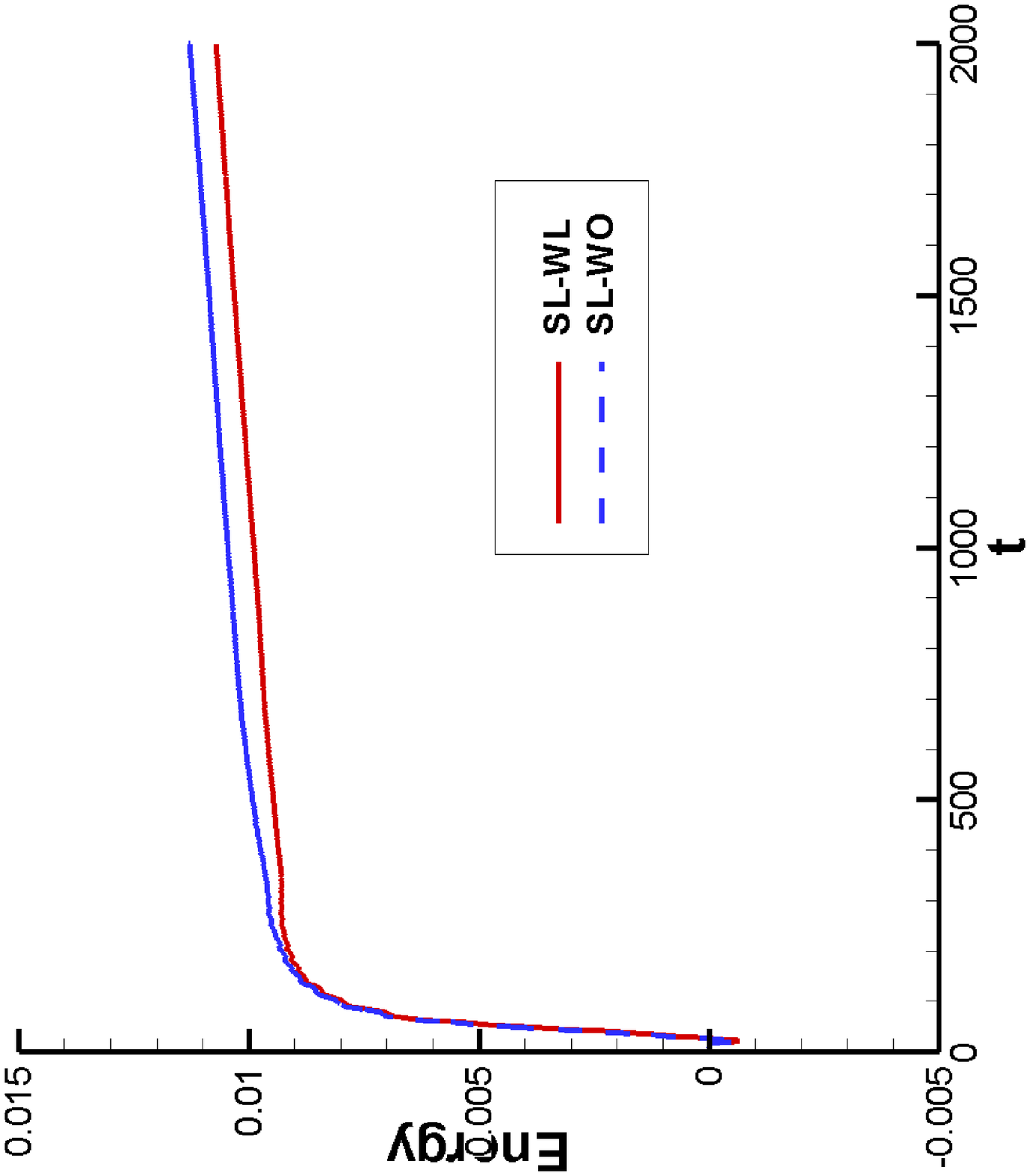}
\includegraphics[width=2.8in,angle=270,clip]{./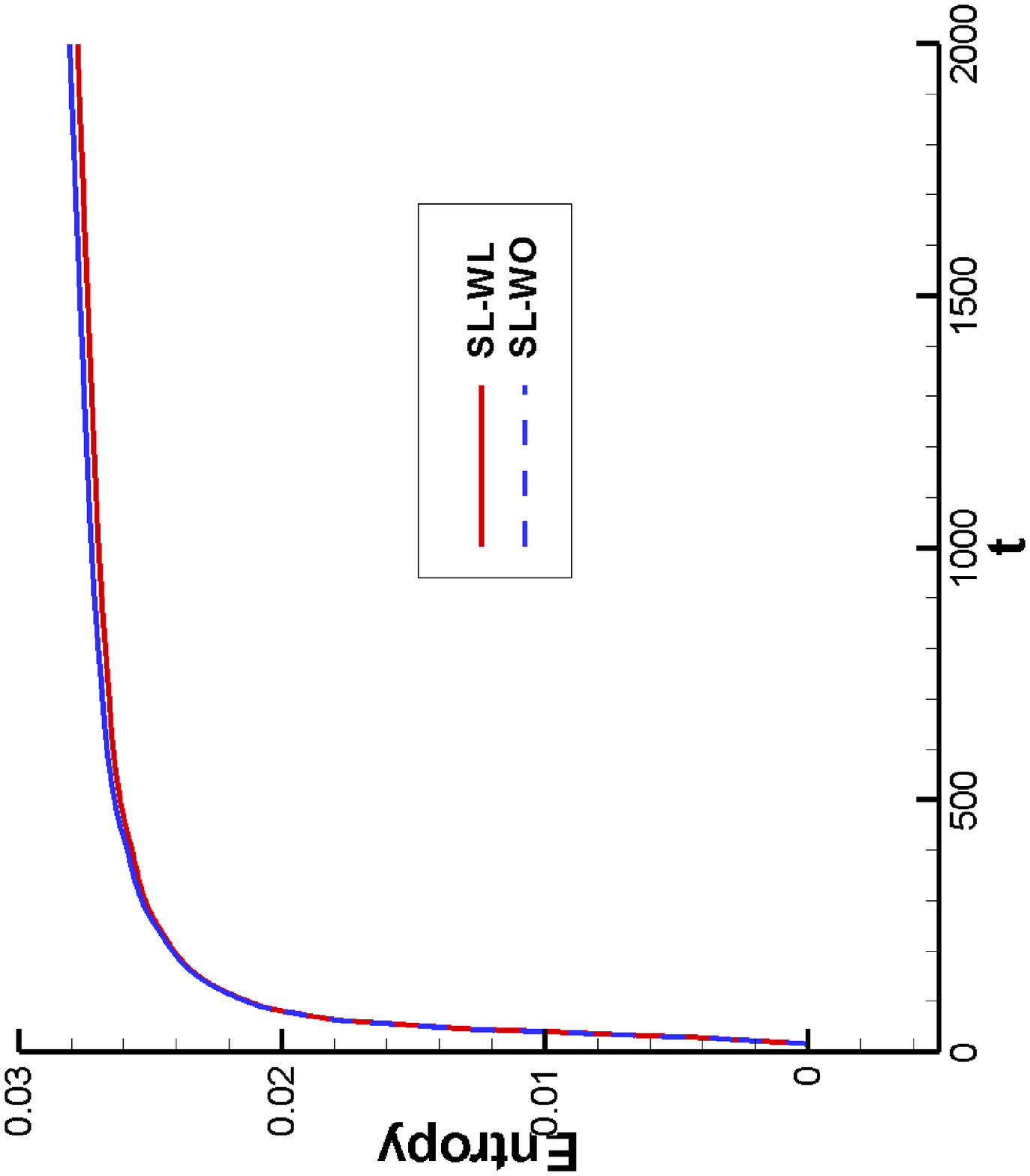}
\caption{Bump-on-tail problem 
with initial condition \eqref{bump}. Time evolution of the
relative deviations of discrete $L^1$ norm, $L^2$ norm, kinetic energy and
entropy for the distribution functions $f$.
Mesh: $N_x\times N_v = 256 \times 256$.}
\label{ie12}
\end{figure}

\begin{figure}
\centering
\includegraphics[width=2.8in,angle=270,clip]{./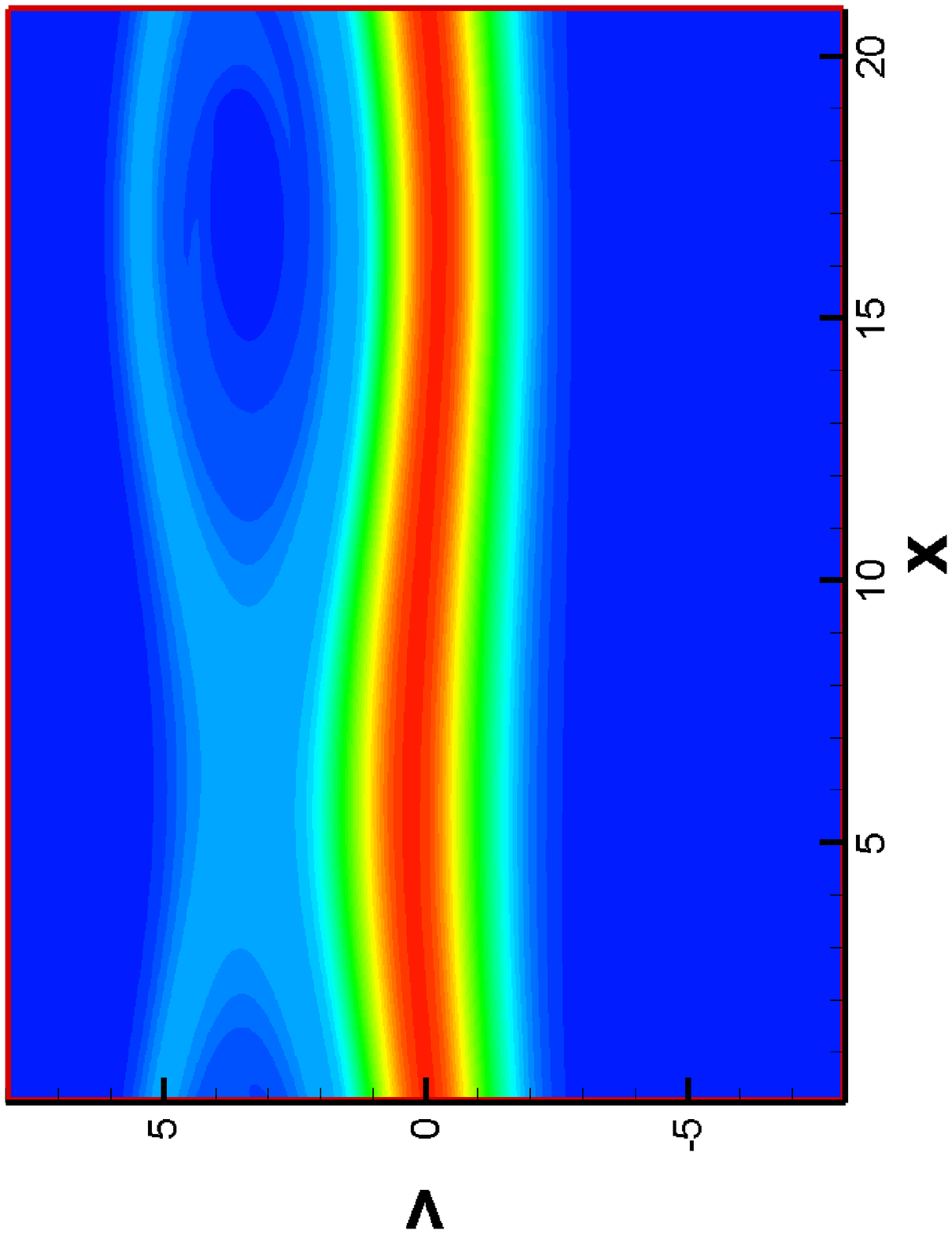}
\includegraphics[width=2.8in,angle=270,clip]{./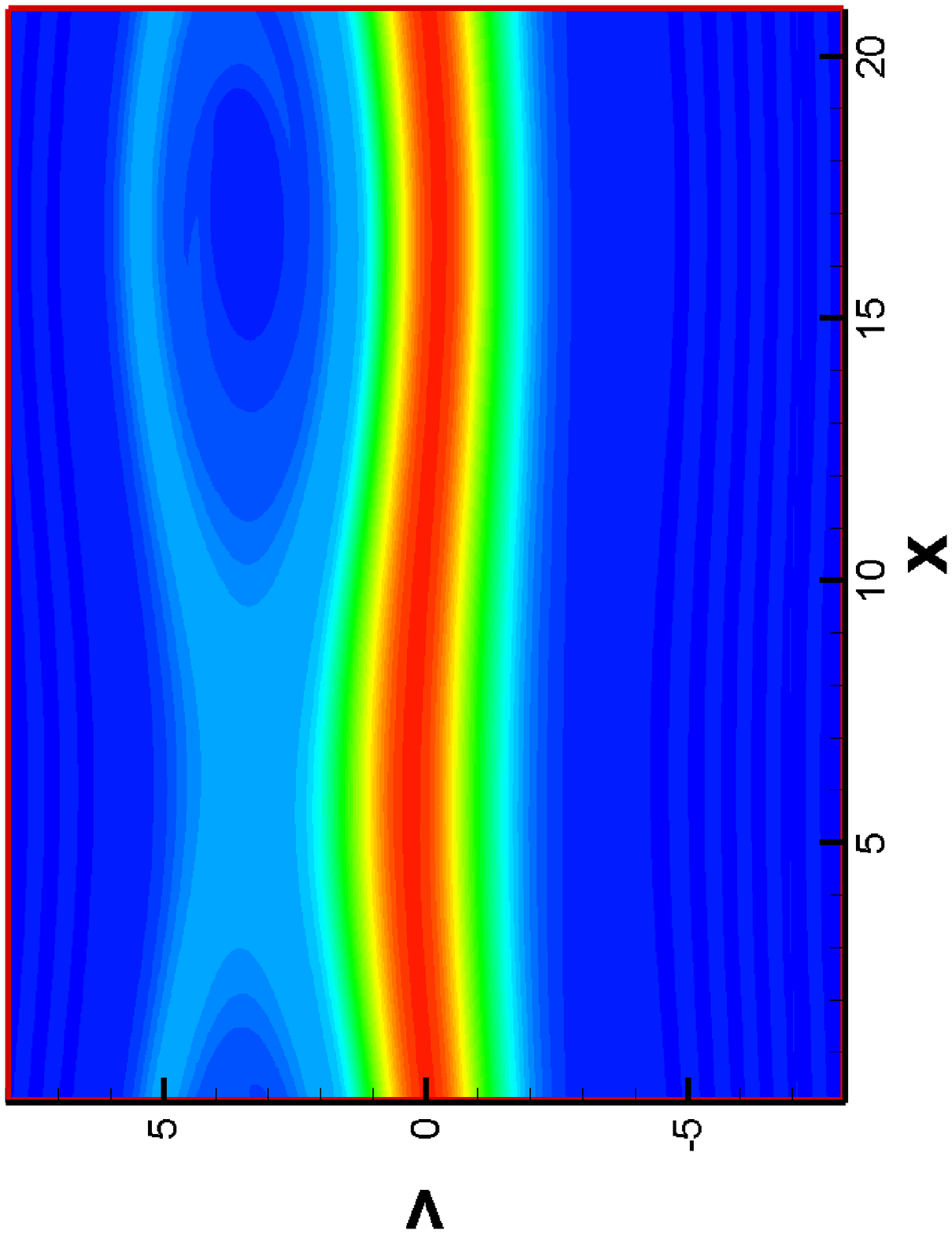}
\caption{Bump-on-tail problem 
with initial condition \eqref{bump}. Contour of phase space for $f$ at $t=500$.
37 equally space contour lines within the range $[0, 0.36]$.
Mesh: $N_x\times N_v = 256 \times 256$. Left: with limiters; Right: without limiters.}
\label{ie13}
\end{figure}
\end{exa}

\begin{exa} (KEEN Wave)
We consider the KEEN waves (kinetic electrostatic electron nonlinear waves)
with the Vlasov equation
\begin{equation}
\partial_t f + v\cdot \nabla_x f + (E(t,x)-E_{ext})\cdot \nabla_v f=0,
\label{keen}
\end{equation}
where $E_{ext}=A_d(t)\sin(kx-\omega t)$ is the external field with $\omega=0.37$. 
$A_d(t)$ is a temporal envelope that is ramped up to a plateau and then ramped down to zero.
Two external fields 
%have been numerically studied in \cite{cheng2012study}
%and references therein, 
\begin{eqnarray}
A_d^J(t)=
\begin{cases}
A_m \sin(t\pi/100) \quad & 0<t<50, \\
A_m                \quad & 50 \le t<150, \\
A_m \cos((t-150)\pi/100) \quad & 150 \le t<200, \\
0                  \quad & 200 \le t<T,
\end{cases}
\label{adj}
\end{eqnarray}
with $A_m=0.052$ \cite{johnston2009persistent}, and
\begin{eqnarray}
A_d^A(t)=
\begin{cases}
A_m\frac{1}{1+e^{-40(t-10)}} \quad & 0<t<60, \\
A_m\big(1-\frac{1}{1+e^{-40(t-110)}}) \quad & 60\le t <T,
\end{cases}
\label{ada}
\end{eqnarray}
with $A_m=0.4$ \cite{afeyan2012kinetic} are considered. 
The system is initialized to be a Maxwellian $f(0,x,v)=1/\sqrt{2\pi}e^{-v^2/2}$. 
For both cases, the computational domain is taken to be $[0, 2\pi/k]\times[-8,8]$,
with $k=0.26$. We take the same mesh $N_x\times N_v=200\times 400$
as in \cite{cheng2012study}, and consider the $n$th Log Fourier mode
for the electric field $E(t,x)$ to be
\begin{equation}
log FM_n(t)=log_{10}\left(\f1L\sqrt{\left|\int_0^L E(t,x) \sin(k n x)dx\right|^2
+\left|\int_0^L E(t,x) \cos(k n x)dx\right|^2}\right).
\end{equation}

We consider the SL method with and without limiter for this example.
The first four Fourier modes are plotted in Figure \ref{figkw1} for the drive
$A_d^J$. The time evolution of the relative deviations of discrete $L^1$ norm 
is around machine error for the solution even without limiter, little difference
can be seen if with limiter, we omit the figures here to save space.
The phase space contours for the drive $A_d^A$ at $t=15s, 60s, 120s, 300s$ 
are plotted in Figure \ref{figkw21}. We show the time evolution of the relative deviations of discrete $L^1$ norm and the first four Fourier modes for the drive $A_d^A$ in Figure \ref{figkw22}. 
For this case, the $L^1$ norm has been significantly improved if with limiter.

\begin{figure}
\centering
\includegraphics[width=3in,clip]{./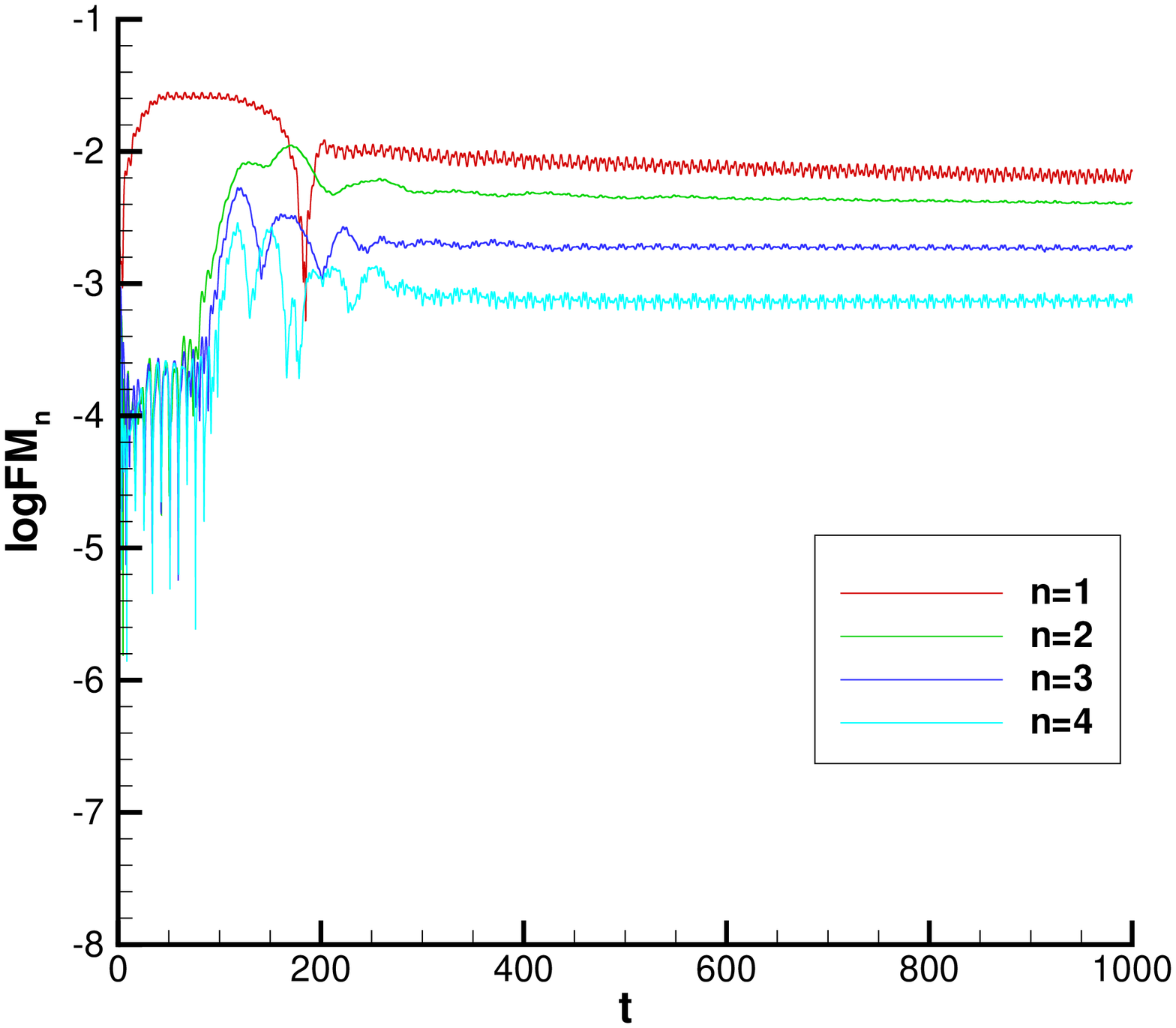},
\includegraphics[width=3in,clip]{./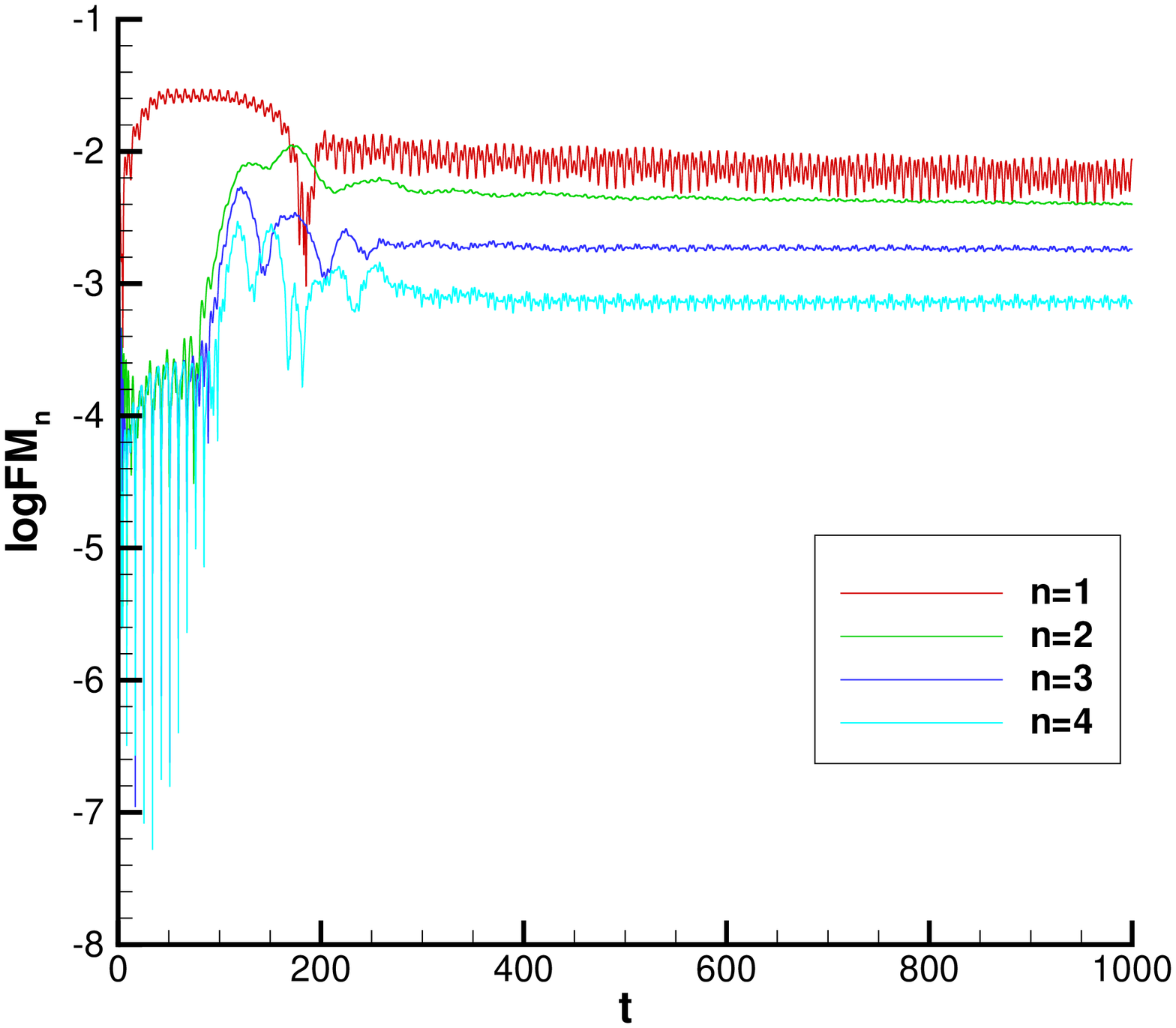}\\
\caption{KEEN wave (\ref{keen}) with the drive $A_d^J$ (\ref{adj}). 
The first four Log Fourier Modes, n=1, 2, 3, 4 from top to bottom.
Mesh: $N_x\times N_v = 200 \times 400$. Left: with limiter; Right: without limiter.}
\label{figkw1}
\end{figure}

\begin{figure}
\centering
\subfigure[t=15s]{\includegraphics[width=2.8in,angle=270,clip]{./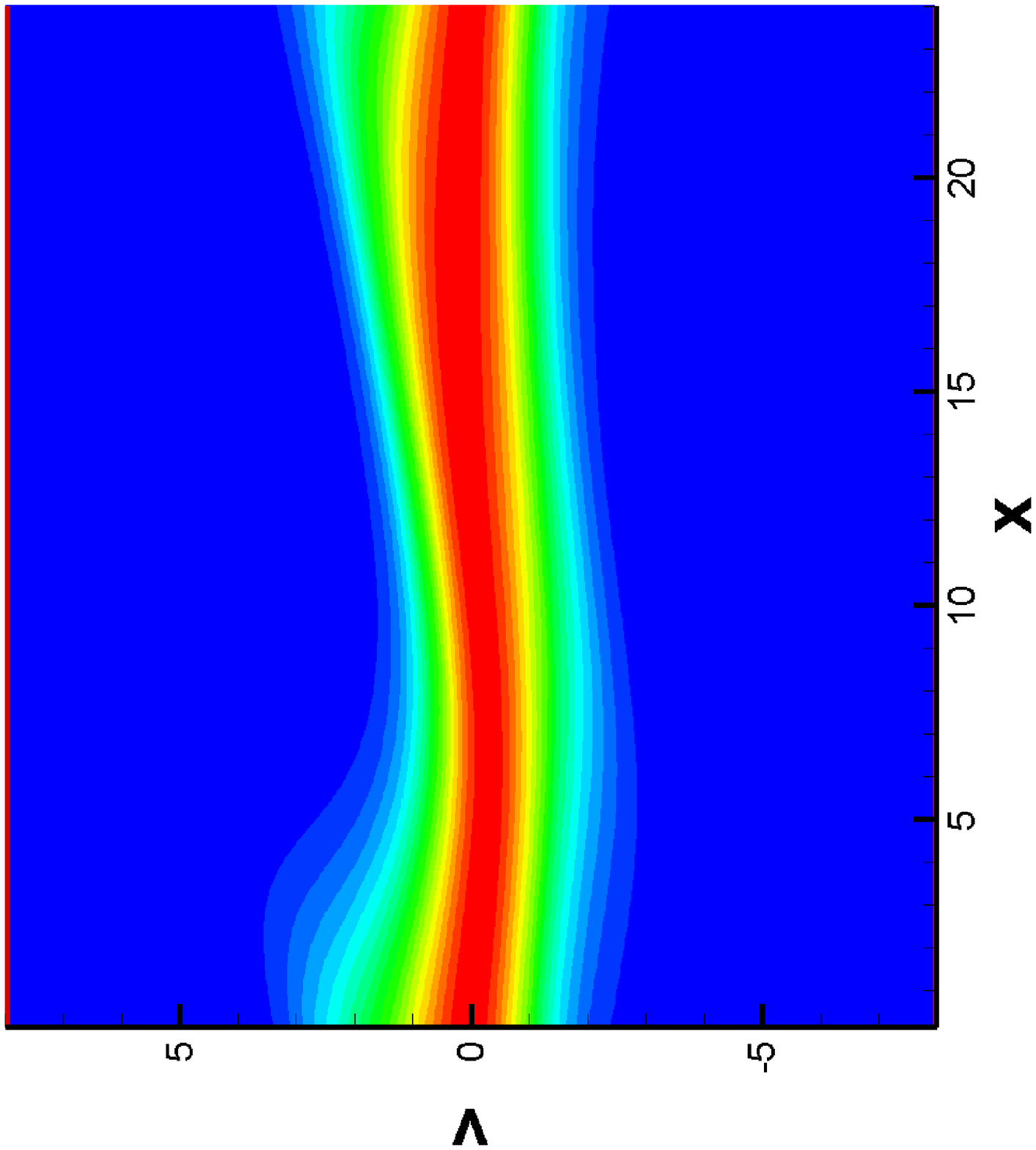}}
\subfigure[t=60s]{\includegraphics[width=2.8in,angle=270,clip]{./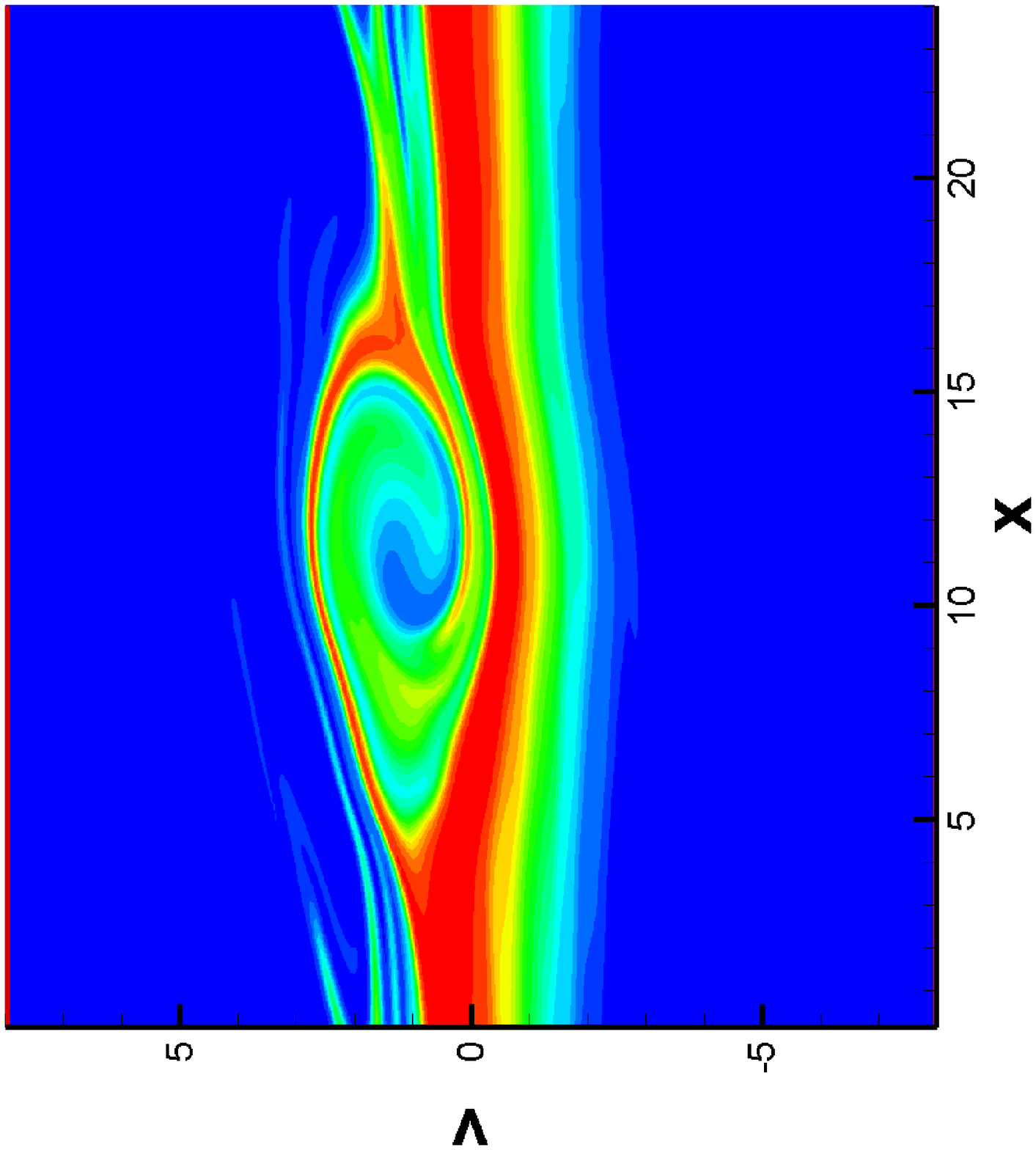}}
\subfigure[t=120s]{\includegraphics[width=2.8in,angle=270,clip]{./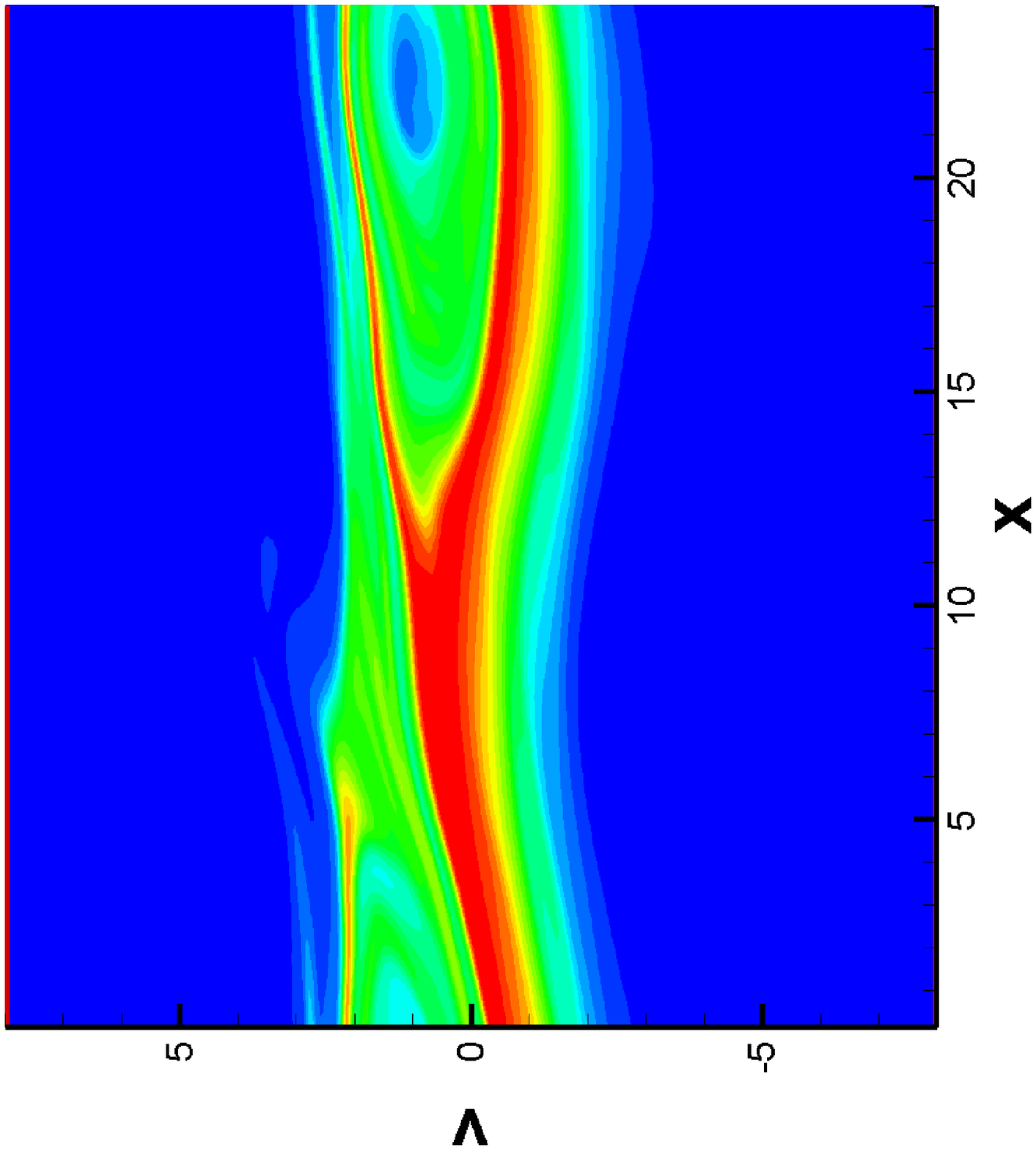}}
\subfigure[t=300s]{\includegraphics[width=2.8in,angle=270,clip]{./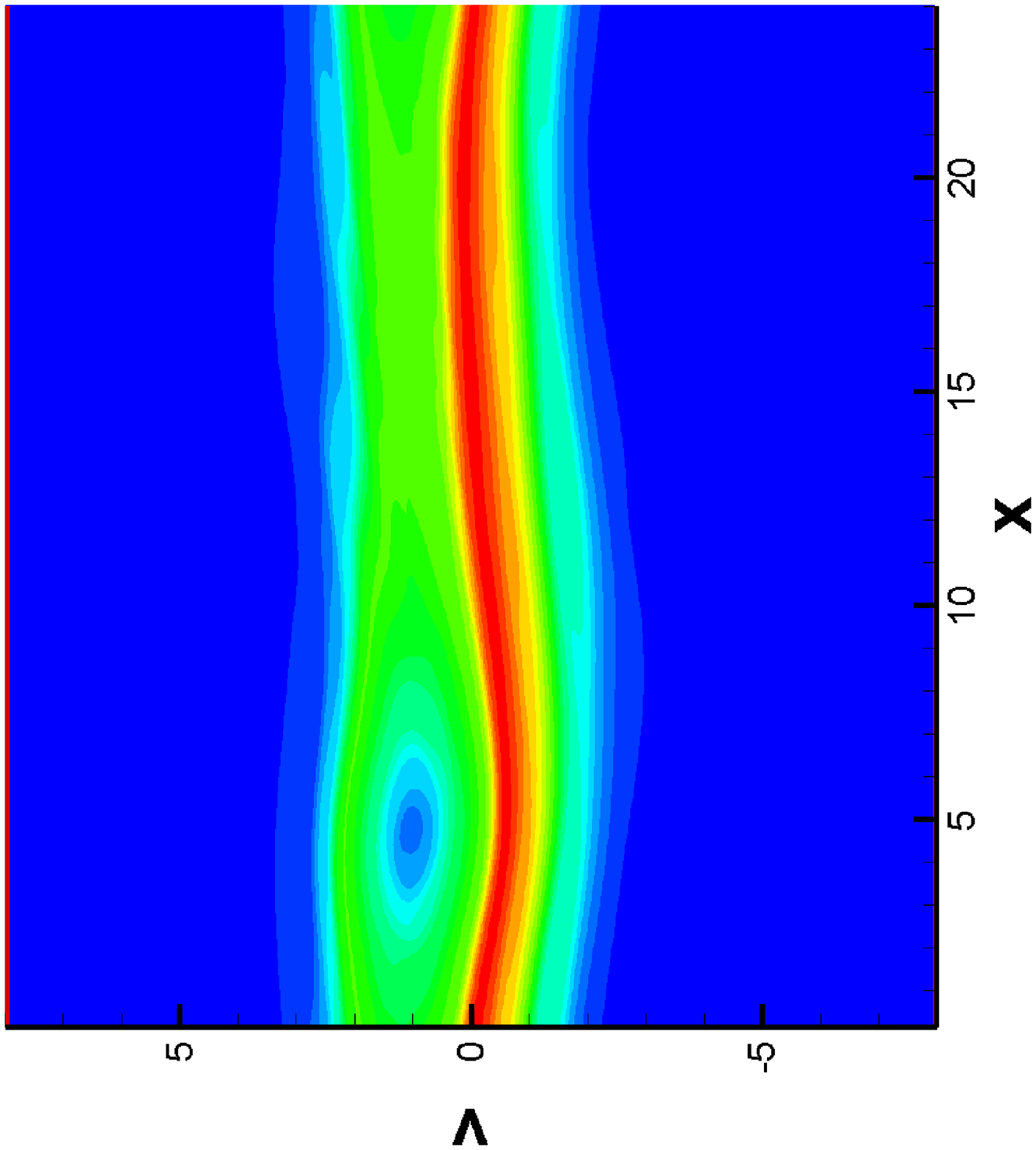}}
\caption{Phase space contour for the KEEN wave (\ref{keen}) with the drive $A_d^A$ (\ref{ada}).  
Mesh: $N_x\times N_v = 200 \times 400$. }
\label{figkw21}
\end{figure}

\begin{figure}
\centering
\includegraphics[width=3in,clip]{./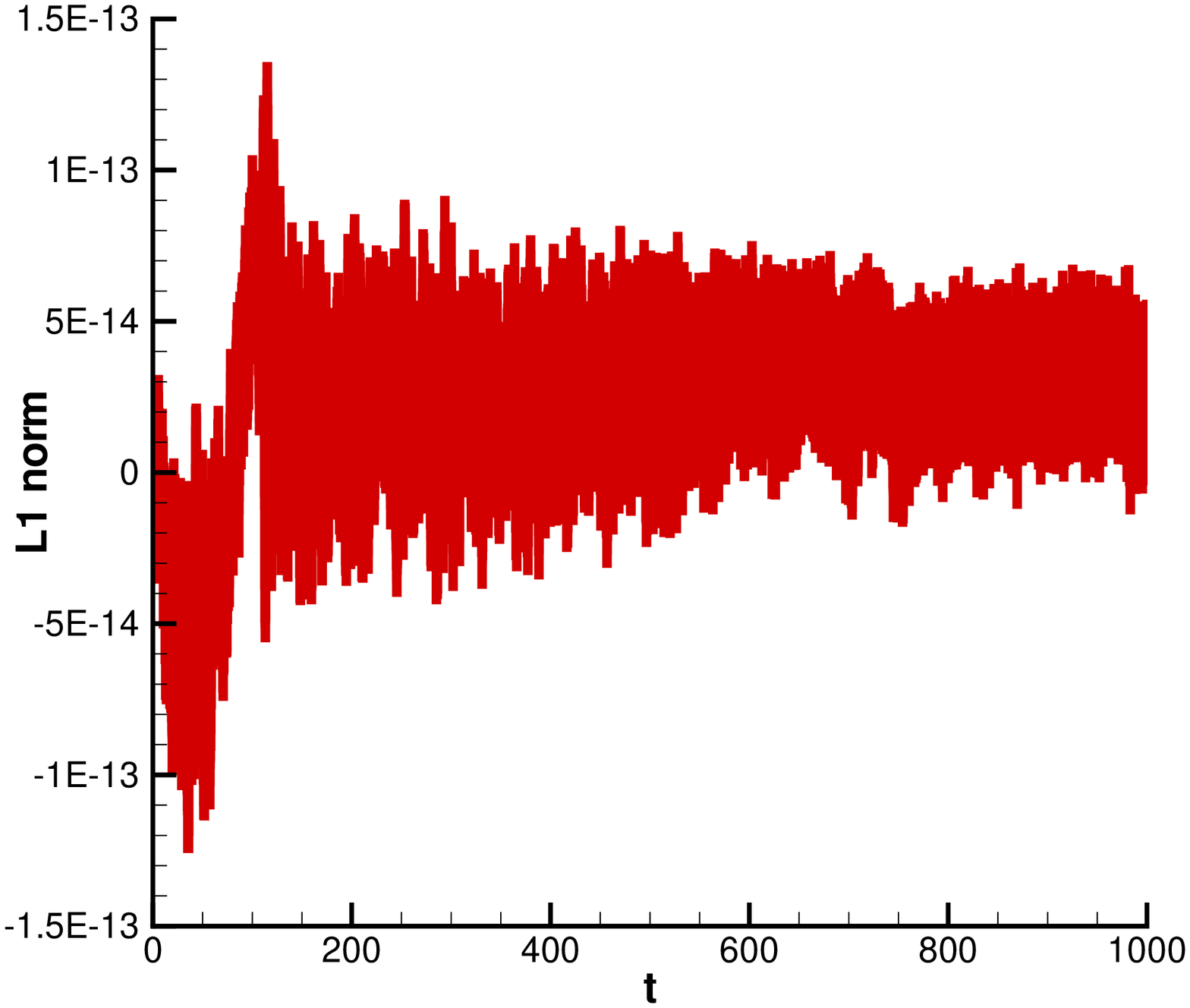},
\includegraphics[width=3in,clip]{./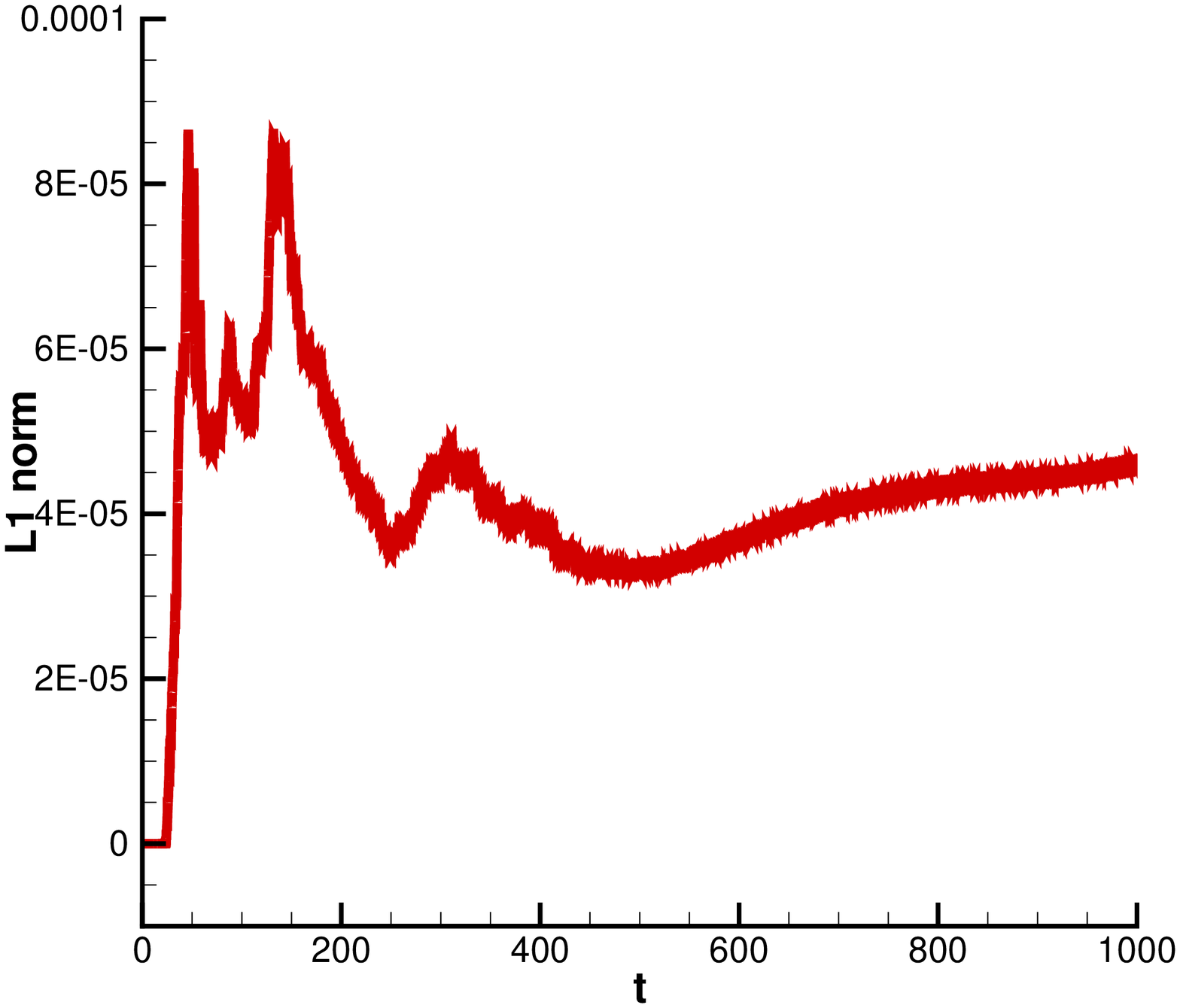}\\
\includegraphics[width=3in,clip]{./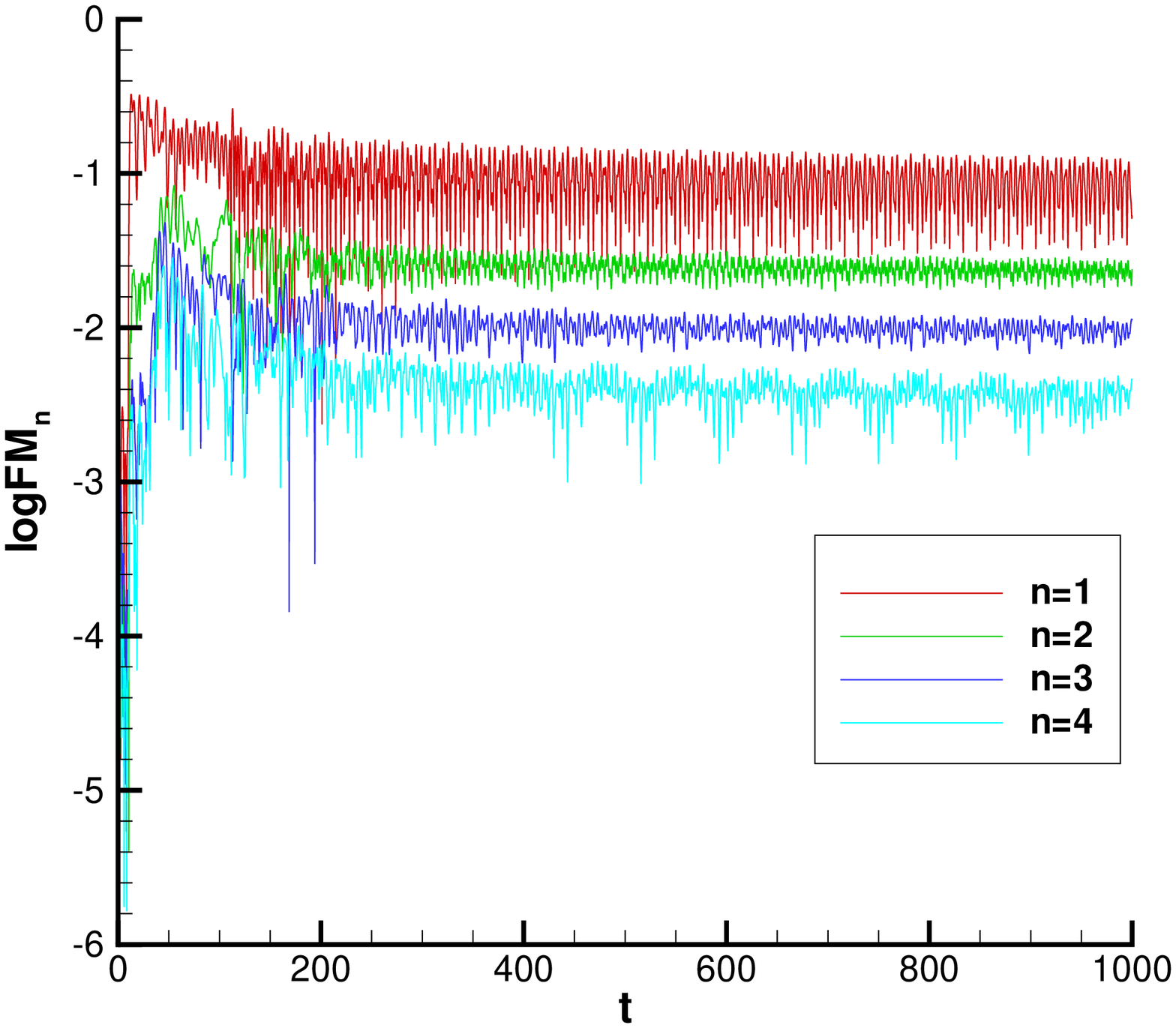},
\includegraphics[width=3in,clip]{./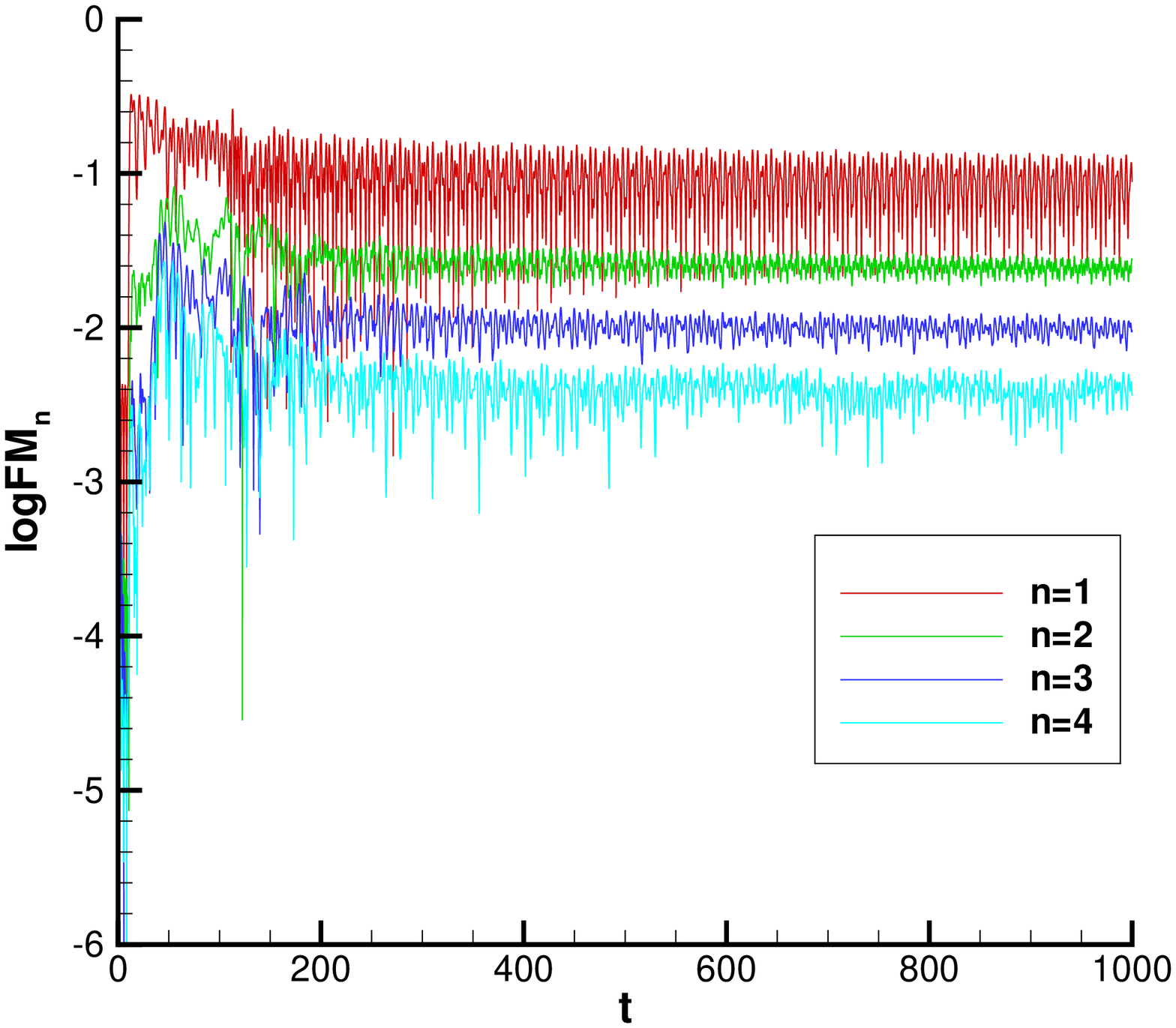}\\
\caption{KEEN wave (\ref{keen}) with the drive $A_d^A$ (\ref{ada}). 
Time evolution of the relative deviations of discrete $L^1$ norm 
(Top) and the first four Log Fourier Modes (Bottom), n=1, 2, 3, 4 from top to bottom.
Mesh: $N_x\times N_v = 200 \times 400$. Left: with limiter; Right: without limiter.}
\label{figkw22}
\end{figure}

\end{exa}

\begin{exa}
Now we consider a 1D Vlasov-Poisson system with two species of electrons and ions \cite{arber2002critical}.
The electrons and ions have opposite charges of equal magnitude and mass ratio $m_i/m_e=M_r$. The
Vlasov equations for electrons and ions are
\begin{align}
\partial_t f_e + v \partial_x f_e - E(t,x) \partial_v f_e =0, \label{elec} \\
\partial_t f_i + v \partial_x f_i + \frac{E(t,x)}{M_r} \partial_v f_i =0, \label{ion}
\end{align}
where $E(t,x)$ is the electric field. The Poisson equation is taken to be
\begin{align}
E(t,x)=-\nabla_x \phi(t,x), \qquad -\Delta_x \phi(t,x) = \int(f_i-f_e)dv.
\end{align}

We study the ion-acoustic turbulence problem. The problem is the onset and saturation of the ion-acoustic instability. We take $M_r=1000$ and the initial ion distribution function is defined to be
\begin{equation}
\label{ion0}
f_i(0,x,v)=\Big(\frac{M_r}{2\pi}\Big)^{1/2}\exp\Big(-\frac{M_r}{2}v^2\Big).
\end{equation}
The electrons are a drifting Maxwellian and the initial distribution function is defined as
\begin{equation}
\label{elec0}
f_e(0,x,v)=(1+a(x))\frac{1}{\sqrt{2\pi}}\exp\Big(-\f12(v-U_e)^2\Big),
\end{equation}
where $U_e=-2$ and
\begin{align*}
a(x)=& 0.01\big( \sin(x)+\sin(0.5x)+\sin(0.1x)+\sin(0.15x)+\sin(0.2x) \\
     &+\cos(0.25x)+\cos(0.3x)+\cos(0.35x) \big).
\end{align*}
The computational domain is $[0, \frac{2\pi}{0.05}]\times[-8, 8]$. In Fig. \ref{ie21}, we
show the difference of the ion and electron fluid speeds $u_i-u_e$, which indicates
the momentum transfer from electrons to ions. We can see the decay rate agrees with
the results in \cite{arber2002critical}. We also show the evoluntion of $L^1$ norms
for the distribution functions $f_e$ and $f_i$ of both electrons and ions in Fig. \ref{ie22}. We can see
the MPP limiters can effectively control the $L^1$ norms to machine error. The phase space for
$f_e$ with and without limiters are displayed in Fig. \ref{ie23}, we can find that the result with
limiters has more clear fine structures than the one without limiters.

\begin{figure}
\centering
\includegraphics[width=3in,clip]{./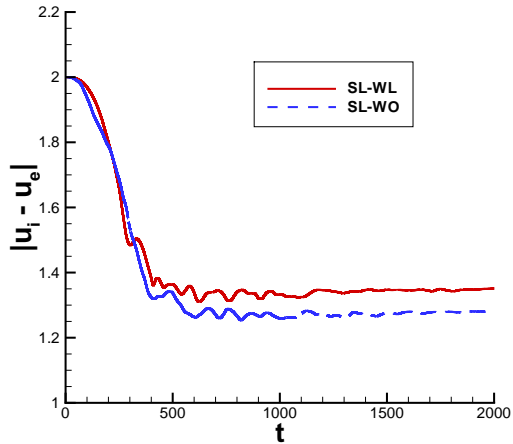}
\caption{Ion-acoustic turbulence problem \eqref{ion} and \eqref{elec}
with initial conditions \eqref{ion0} and \eqref{elec0}. Time evolution of $|u_i-u_e|$.
Mesh: $N_x\times N_v = 256 \times 256$. }
\label{ie21}
\end{figure}

\begin{figure}
\centering
\includegraphics[width=2.8in,angle=270,clip]{./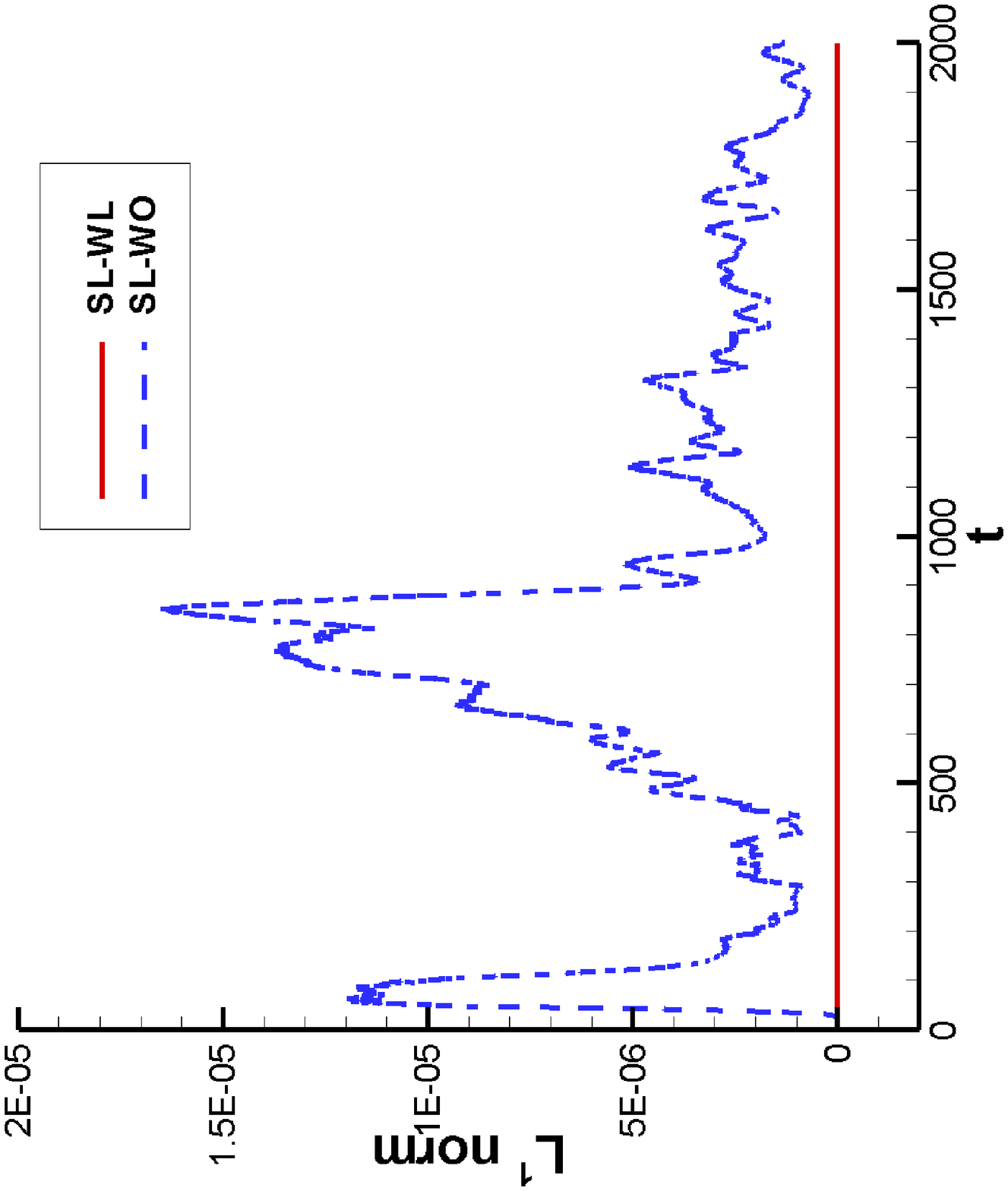}
\includegraphics[width=2.8in,angle=270,clip]{./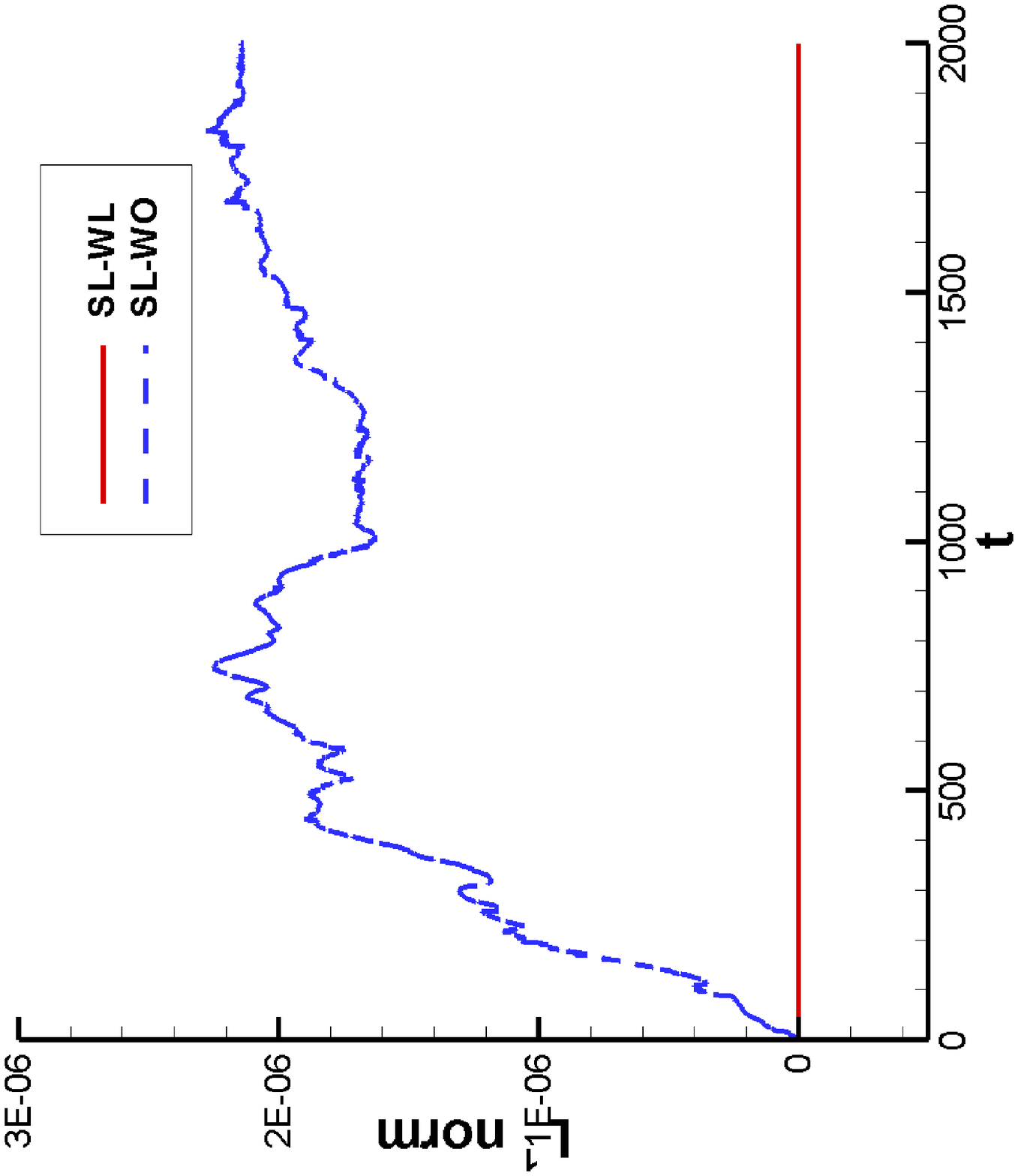}
\caption{Ion-acoustic turbulence problem \eqref{ion} and \eqref{elec}
with initial conditions \eqref{ion0} and \eqref{elec0}. Time evolution of 
$L^1$ norms for the distribution functions $f_e$ (left) and $f_i$ (right).
Mesh: $N_x\times N_v = 256 \times 256$.}
\label{ie22}
\end{figure}

\begin{figure}
\centering
\includegraphics[width=2.8in,angle=270,clip]{./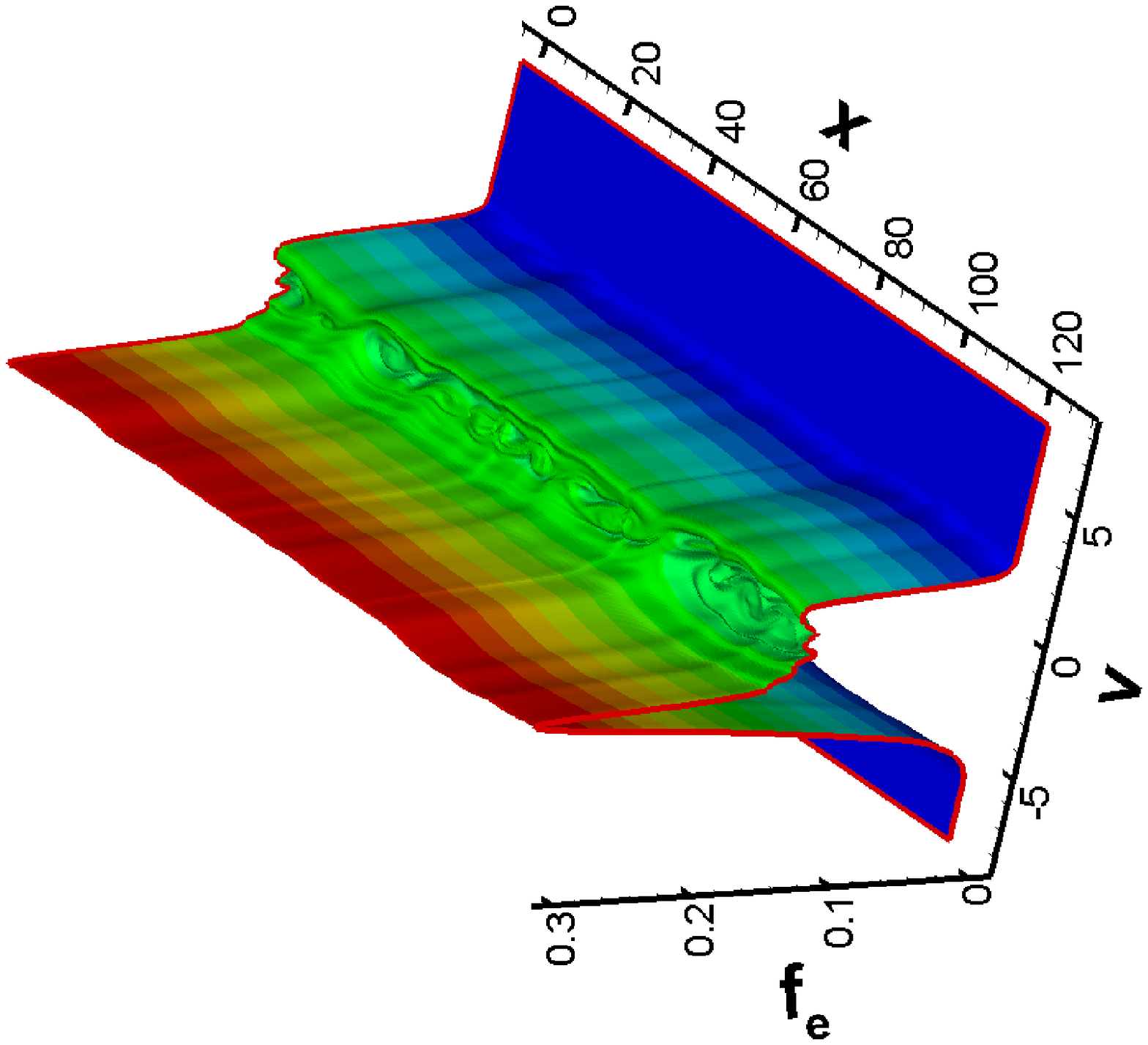}
\includegraphics[width=2.8in,angle=270,clip]{./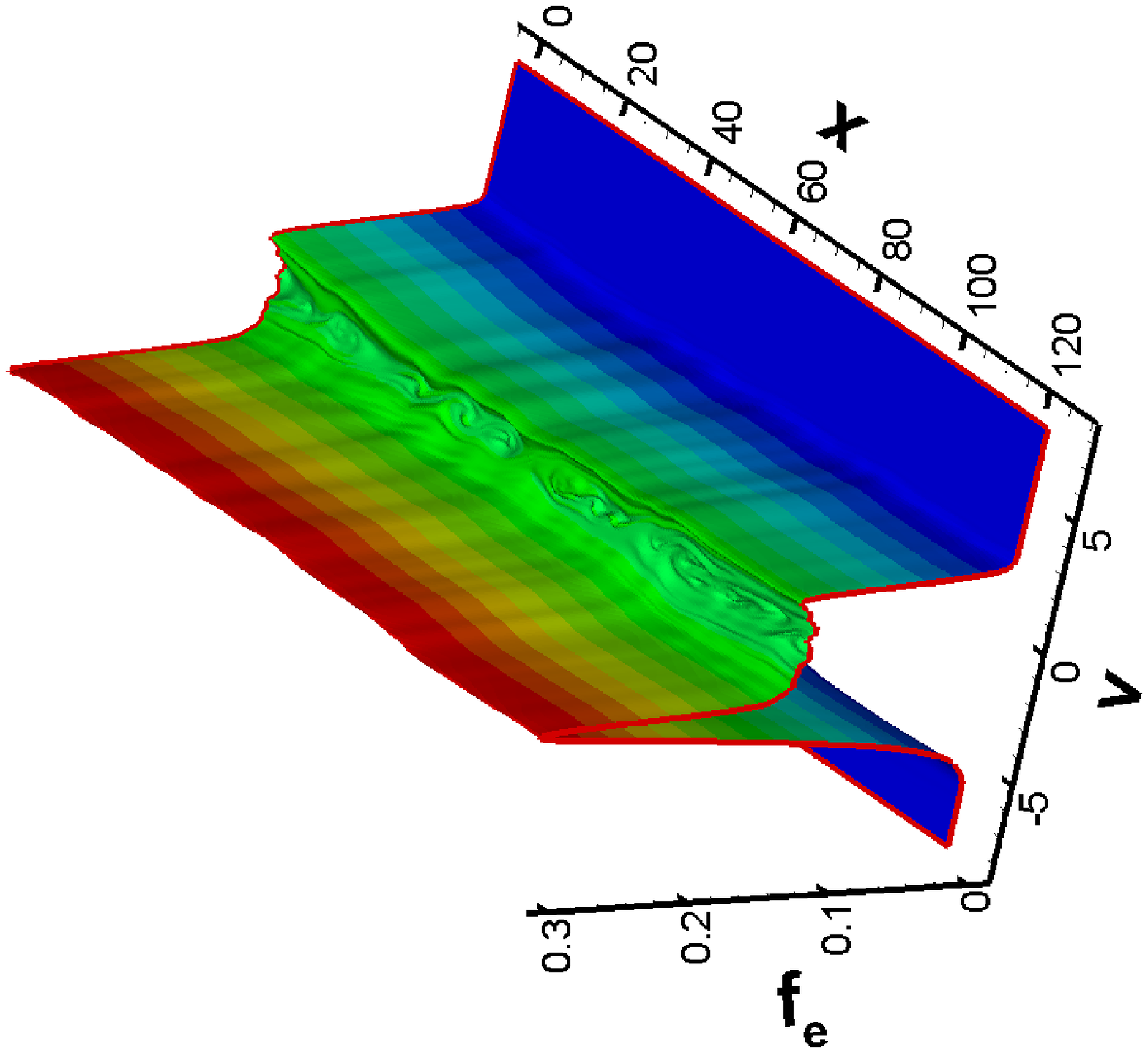}
\caption{Ion-acoustic turbulence problem \eqref{ion} and \eqref{elec}
with initial conditions \eqref{ion0} and \eqref{elec0}. Surface of $f_e$ at $t=2000$.
Mesh: $N_x\times N_v = 256 \times 256$. Left: with limiters; Right: without limiters.}
\label{ie23}
\end{figure}
\end{exa}
\section{Conclusion}
\label{sec6}
\setcounter{equation}{0}
\setcounter{figure}{0}
\setcounter{table}{0}

In this paper, we generalized the parametrized MPP flux limiter to the
semi-Lagrangian finite difference WENO scheme with application to the 
Vlasov-Poisson system. The MPP flux limiter preserves the maximum principle of the
numerical solutions by the design, while maintaining the mass conservation and high order accuracy.
Numerical studies demonstrate decent performance of the MPP flux limiter. In addition, 
the scheme can preserve the discrete $L^1$ norm up to the machine error. 
%Applying the limiter on energy conserving schemes will be subject to our future work. 

\appendix
\section{Appendix}

\label{appendix}

\renewcommand{\theequation}{A.\arabic{equation}}

Below, the first order numerical flux $h_{j+\f12}$ and the fifth order fluxes in (\ref{weno_flux}) for the case of $|a|\Delta t\le \Delta x$ are provided. For more details, see \cite{qiu2011bconservative}.

If $a>0$, let $\xi=a\f{\Delta t}{\Delta x}$, the first order reconstructed flux would be
\begin{equation}
h_{i+\f12}=a\Delta t u^n_i.
\label{eq11}
\end{equation}
The fifth order WENO reconstruction of $\mathcal{H}(x_{i+\frac12})$ is a convex combination of three 3rd order fluxes, and is given by
\begin{equation}
\mathcal{H}(x_{i+\frac12})=\omega_1 \mathcal{H}^{(1)}(x_{i+\frac12}) + \omega_2 \mathcal{H}^{(2)}(x_{i+\frac12}) + \omega_3 \mathcal{H}^{(3)}(x_{i+\frac12}),
\label{eq10}
\end{equation}
where $\mathcal{H}^{(r)}(x_{i+\frac12})$ is reconstructed from the following three potential stencils for $r=1, 2, 3$,
\begin{equation*}
S_1=\{u^n_{i-2}, u^n_{i-1}, u^n_i\}, \quad S_2=\{u^n_{i-1}, u^n_{i}, u^n_{i+1}\} \text{ and }
S_3=\{u^n_{i}, u^n_{i+1}, u^n_{i+2}\}.
\end{equation*}
%The reconstructions of $h_{i+\f12}$ and $\mathcal{H}_{i+\f12}$ have two separate cases, $|a|\Delta t\le \Delta x$ and $|a|\Delta t > \Delta x$, and it also depends on the sign of $a$. We list the case of $|a|\Delta t\le \Delta x$ in the appendix, for details see \cite{qiu2011bconservative}. 
The reconstructed third order fluxes in (\ref{eq10}) are
\begin{eqnarray}
\mathcal{H}^{(1)}(x_{i+\frac12})&=&\Delta x\left((\frac16\xi^3-\frac12\xi^2+\frac13\xi)u^n_{i-2}
+(-\frac13\xi^3+\frac32\xi^2-\frac76\xi)u^n_{i-1}+(\frac16\xi^3-\xi^2+\frac{11}{6}\xi)u^n_i\right), 
\nonumber \\
\\
\mathcal{H}^{(2)}(x_{i+\frac12})&=&\Delta x\left((\frac16\xi^3-\frac16\xi)u^n_{i-1}
+(-\frac13\xi^3+\frac12\xi^2+\frac56\xi)u^n_{i}+(\frac16\xi^3-\frac12\xi^2+\frac{1}{3}\xi)u^n_{i+1}\right), \\
\mathcal{H}^{(3)}(x_{i+\frac12})&=&\Delta x\left((\frac16\xi^3+\frac12\xi^2+\frac13\xi)u^n_{i}
+(-\frac13\xi^3-\frac12\xi^2+\frac56\xi)u^n_{i+1}+(\frac16\xi^3-\frac{1}{6}\xi)u^n_{i+2}\right).
\end{eqnarray}
The linear weights $\gamma_1$, $\gamma_2$ and $\gamma_3$ are
\begin{eqnarray}
\gamma_1=\frac1{10}+\frac3{20}\xi+\frac1{20}\xi^2, \quad
\gamma_2=\frac35+\frac1{10}\xi-\frac1{10}\xi^2, \quad 
\gamma_3=\frac3{10}-\frac14{\xi}+\frac1{20}\xi^2.
\label{linearw1}
\end{eqnarray}
The nonlinear weights $\omega_1$, $\omega_2$ and $\omega_3$ are computed by
\begin{equation}
\omega_r=\tilde{\omega}_r/\sum^3_{i=1}\tilde \omega_i, \qquad \tilde \omega_r=\gamma_r/(\epsilon+\beta_r)^2, \quad r=1, 2, 3.
\label{nonlinearw}
\end{equation}
where $\epsilon$ is a small number to avoid the denominator becoming zero. In our numerical tests,
we take $\epsilon=10^{-6}$. The smooth indicators $\beta_1$, $\beta_2$ and $\beta_3$ are
\begin{eqnarray*}
\beta_1&=&\frac{13}{12}(u^n_{i-2}-2u^n_{i-1}+u^n_i)^2+\frac14(u^n_{i-2}-4u^n_{i-1}+3u^n_i)^2, \\
\beta_2&=&\frac{13}{12}(u^n_{i-1}-2u^n_{i}+u^n_{i+1})^2+\frac14(u^n_{i-1}-u^n_{i+1})^2, \\
\beta_3&=&\frac{13}{12}(u^n_{i}-2u^n_{i+1}+u^n_{i+2})^2+\frac14(3u^n_{i}-4u^n_{i+1}+u^n_{i+2})^2. 
\end{eqnarray*}

If $a<0$, let $\xi=|a|\f{\Delta t}{\Delta x}$, the first order flux is
\begin{equation}
h_{i+\f12}=-|a|\Delta t u^n_{i+1}.
\label{eq12}
\end{equation}
For the high order fluxes (\ref{eq10}), 
\begin{eqnarray}
\mathcal{H}^{(1)}(x_{i+\frac12})&=&-\Delta x\left((\frac16\xi^3-\frac16\xi)u^n_{i-1}
+(-\frac13\xi^3+\frac12\xi^2+\frac56\xi)u^n_{i}+(\frac16\xi^3-\frac12\xi^2+\frac{1}{3}\xi)u^n_{i+1}\right), \\
\mathcal{H}^{(2)}(x_{i+\frac12})&=&-\Delta x\left((\frac16\xi^3+\frac12\xi^2+\frac13\xi)u^n_{i}
+(-\frac13\xi^3-\frac12\xi^2+\frac56\xi)u^n_{i+1}+(\frac16\xi^3-\frac{1}{6}\xi)u^n_{i+2}\right), \\
\mathcal{H}^{(3)}(x_{i+\frac12})&=&-\Delta x\left((\frac16\xi^3-\xi^2+\frac{11}6\xi)u^n_{i+1}
+(-\frac13\xi^3+\frac32\xi^2-\frac76\xi)u^n_{i+2}+(\frac16\xi^3-\frac12\xi^2+\frac{1}{3}\xi)u^n_{i+3}\right), \nonumber \\ 
\end{eqnarray}
with linear weights
\begin{eqnarray}
\gamma_1=\frac3{10}-\frac14{\xi}+\frac1{20}\xi^2, \quad
\gamma_2=\frac35+\frac1{10}\xi-\frac1{10}\xi^2, \quad
\gamma_3=\frac1{10}+\frac3{20}\xi+\frac1{20}\xi^2.
\label{linearw2}
\end{eqnarray}
The smooth indicators are
\begin{eqnarray*}
\beta_1&=&\frac{13}{12}(u^n_{i-1}-2u^n_{i}+u^n_{i+1})^2+\frac14(u^n_{i-1}-4u^n_{i}+3u^n_{i+1})^2, \\
\beta_2&=&\frac{13}{12}(u^n_{i}-2u^n_{i+1}+u^n_{i+2})^2+\frac14(u^n_{i}-u^n_{i+2})^2, \\
\beta_3&=&\frac{13}{12}(u^n_{i+1}-2u^n_{i+2}+u^n_{i+3})^2+\frac14(3u^n_{i+1}-4u^n_{i+2}+u^n_{i+3})^2. \\
\end{eqnarray*}

\bibliographystyle{siam}
\bibliography{refer}

\end{document}